\def\R {\mathbb{R}}
\def\C {\mathcal{C}}
\def\c{\mathfrak{C}}
\def\g{\mathfrak{G}}
\def\D {\mathbf{D}}
\newtheorem{proposition}{Proposition}[section]
\newtheorem{theorem}[proposition]{Theorem}
\newtheorem{corollary}{Corollary}[section]
\newtheorem{lemma}{Lemma}[section]
\theoremstyle{definition}
\newtheorem{remark}{Remark}[section]
\numberwithin{equation}{section}
\newtheorem{example}{Example}[section]
\begin{document}

\title[Regularized  Reconstruction of Scalar Parameters in  Subdiffusion]
{Regularized  Reconstruction of Scalar Parameters  in
\\ Subdiffusion with Memory via a Nonlocal Observation}

\author[A. Hulianytskyi, S. Pereverzyev, S.V. Siryk and N. Vasylyeva]
{Andrii Hulianytskyi, Sergei Pereverzyev, Sergii V. Siryk $\&$
Nataliya Vasylyeva}

\address{Taras Shevchenko National University
\newline\indent
str. Volodymyrska 64/13, 01601, Kyiv, Ukraine} \email[A.
Hulianytskyi]{andriyhul@gmail.com}

\address{Johann Radon Institute
\newline\indent
Altenbergstrasse 69, 4040 Linz, Austria} \email[S.
Pereverzyev]{sergei.pereverzyev@oeaw.ac.at}

\address{CONCEPT Lab, Istituto Italiano di Tecnologia
\newline\indent
Via Morego 30, 16163, Genova, Italy} \email[S.
Siryk]{accandar@gmail.com}

\address{Institute of Applied Mathematics and Mechanics of NAS of Ukraine
\newline\indent
G.Batyuka st.\ 19, 84100 Sloviansk, Ukraine;  and
\newline\indent
S.P. Timoshenko Institute of Mechanics of NAS of Ukraine
\newline\indent
Nesterov str.\ 3, 03057 Kyiv, Ukraine}
\email[N.Vasylyeva]{nataliy\underline{\ }v@yahoo.com}

\subjclass[2000]{Primary 35R11, 35R30; Secondary 65N20, 65N21}
\keywords{oxygen transport, bioheat transfer, multi-term
subdiffusion equation, Caputo derivative, inverse problem,
quasi-optimality approach}

\begin{abstract}
In the paper, we propose an analytical and numerical approach to
identify scalar parameters (coefficients, orders of fractional
derivatives) in the multi-term fractional differential operator in
time, $\mathbf{D}_t$. To this end, we analyze  inverse problems with
an additional nonlocal observation related to a linear subdiffusion
equation
$\mathbf{D}_{t}u-\mathcal{L}_{1}u-\mathcal{K}*\mathcal{L}_{2}u=g(x,t),$
where $\mathcal{L}_{i}$ are the second order elliptic operators with
time-dependent coefficients, $\mathcal{K}$ is a summable memory
kernel, and $g$ is an external force. Under certain assumptions on
the given data in the model, we derive explicit formulas for unknown
parameters. Moreover, we discuss the issues concerning to the
uniqueness and the stability in these inverse problems. At last, by
employing the Tikhonov regularization scheme with the
quasi-optimality approach, we give a computational algorithm to
recover the scalar parameters from a noisy discrete measurement and
demonstrate the effectiveness (in practice) of the proposed
technique via several numerical tests.

\end{abstract}

\maketitle

\section{Introduction}
\label{si}

\noindent Contemporary clinical treatments and medicines such as
cancer hypothermia, laser surgery, thermal ablation and thermal
disease diagnostic need comprehensive study (accounting memory
effect) of thermal phenomena, temperature behavior and the mass
transport of blood in biological tissues \cite{E1,GAV,MV,SM} (see
also references therein). Generalized heat transfer models are
described with usual wave or diffusion equations, while anomalous
diffusion processes acquire memory and nonlocal effects, which are
not easily captured within the framework of generalized heat
conduction theories exploiting partial differential equations with
derivatives of  integer order. Over the past few decades, fractional
variant of diffusion and wave equations (i.e. the equations with
fractional derivatives) provide a more realistic description of heat
transfer in materials and mass diffusion phenomena, which, in turn,
suggests an advanced mathematical approach to analysis and modeling
of various real-world phenomena. To derive these fractional
differential equations from physical laws, there are two different
ways. The first, so-called "microlevel" method, constructs on
modeling and passing to continuous limit. The second way is based on
conservative laws and specific constitutive relations with memory.
Indeed, following \cite{E1,PKLS} and appealing to the modified Green
and Naghdi III model with a phase-lag (see for details
\cite{GN1,RC}), we arrive at the constitutive relations (the
modified Fourier law and the classical energy equation) for the
temperature distribution in biological materials occupying a domain
$\Omega\subset \R^{n}$ ($n=1,2,3$),
\begin{equation}\label{i.1}
\begin{cases}
q(x,t+\tau)=-k_1\nabla\Theta(x,t)-k_2\nabla\frac{\partial
\Theta}{\partial
t}(x,t),\\
\varrho C_{\theta}\frac{\partial \Theta}{\partial t}(x,t)=-\text{div
}q(x,t)+Q,
\end{cases}
\end{equation}
where $q$ is the heat flux, $\Theta$ is the temperature variation of
each point $x\in\bar{\Omega}$ and time $t\in[0,T]$ from a uniform
temperature $\Theta_{0}$, the coefficients appearing in these
relations are positive constant material parameters, $Q$ is the
volumetric heat generated by metabolism and blood perfusion (see for
more details \cite[Section 2]{E1}). The meaning of a delay parameter
$\tau$ in the Fourier law in this model differs (generally) from the
commonly referred thermal relaxation time and may be of
comparatively large value, for example, $\tau$ changes from $16$s to
$30$s in meat product \cite{Ka}. Hence, to derive the governing
equation from \eqref{i.1} accounting memory effect, we first utilize
the fractional Taylor series \cite[Proposition 3.1]{J} and rewrite
the heat flux as
\begin{equation}\label{i.2}
q(x,t+\tau)=\sum_{i=0}^{K}\frac{\tau^{\mu_{i}}}{\Gamma(1+\mu_{i})}\mathbf{D}_{t}^{\mu_{i}}q(x,t)\quad\text{with}\quad
\mu_{i}=i\mu,\quad \mu\in(0,1),\quad \mu_{K}\leq 1.
\end{equation}
 Here, the symbol
$\D_{t}^{\mu_i}$ denotes the (regularized) left Caputo fractional
derivative of order $\mu_{i}\in(0,1]$ with respect to $t$ defined as
\[
\D_{t}^{\mu_{i}}q(\cdot,t)=\begin{cases}
\frac{1}{\Gamma(1-\mu_{i})}\frac{\partial}{\partial
t}\int_{0}^{t}\frac{[q(\cdot,s)-q(\cdot,0)]}{(t-s)^{\mu_{i}}}ds,\quad\mu_{i}\in(0,1),\\
\frac{\partial q}{\partial
t}(\cdot,t),\qquad\qquad\qquad\qquad\quad\mu_{i}=1,
\end{cases}
\]
where $\Gamma$ is the Euler Gamma-function. Plugging \eqref{i.2} to
the first equality  in \eqref{i.1} and then, under certain
assumptions on the function $\Theta$, performing straightforward
technical calculations, we end up with
\begin{equation}\label{i.3}
\varrho
C_{\theta}\sum_{i=0}^{K}\frac{\tau^{\mu_{i}}}{\Gamma(1+\mu_{i})}\mathbf{D}_{t}^{\mu_{i}}\Theta-k_2\Delta\Theta-k_1\int_{0}^{t}\Delta\Theta(x,s)ds
=\sum_{i=0}^{K}\frac{\tau^{\mu_{i}}}{\Gamma(1+\mu_{i})}I_{t}^{1-\mu_{i}}Q-k_2\Delta\Theta(x,0),
\end{equation}
where $I_{t}^{1-\mu_{i}}$ denotes the left fractional
Riemann-Liouville integral of order $1-\mu_i$ with respect to time
$t$.

\noindent Observing this equation, we remark that the orders of
fractional derivatives are unambiguously defined via parameters
$\mu$ and $K$, which are in general arbitrary. Indeed, if $\tau\geq
1$ and $\mu_{i}\in(0,1),$ then the coefficients at the fractional
derivatives in the series \eqref{i.2} can be of the same order of
smallness that arrives at the uncertainty in choice of number $K$ of
terms in the fractional Fourier series. \textit{Thus, in order to
complete the derivation of the equation modeling the heat conduction
in relevant biological environments, we have to  find the orders of
two fractional derivatives (e.g., $\mu_{K}$ and some $\mu_{i},$
$i\in\{1,2,...,K-1\}$) in \eqref{i.3} via additional data or
measurements.}

In connection with the latest, we mention that the similar problem
arises in the advanced model of oxygen transport through capillaries
\cite{STS,SR} (see also references therein), where the concentration
of oxygen $U=U(x,t)$ satisfies the two-term fractional subdiffusion
equation
\begin{equation}\label{i.4}
\mathbf{D}_{t}^{\nu_1}U-\tau_2\mathbf{D}_{t}^{\nu_2}U=\text{div}(a\nabla
U)-k-I_{t}^{\nu_1}(a_1\nabla U+a_2 U),\quad 0<\nu_2<\nu_1<1.
\end{equation}
Here, $\tau_2$ is the time lag in concentration of oxygen along the
capillary, $k$ is the rate of consumption per volume of tissue, and
$a,$ $a_i$ are the diffusion coefficients of oxygen. We notice that,
the term $\mathbf{D}_{t}^{\nu_1}U-\tau\mathbf{D}_{t}^{\nu_2}U$
describes  the net diffusion of oxygen to all tissues. In this
model, as in the previous one, the explicit values of $\nu_1,\nu_2$
are also not specified and should be again recovered  via solving
the corresponding inverse problems.

In this work, motivated by the above discussion, we focus on the
analytical and numerical investigation of inverse problems
concerning the identification of scalar parameters in two- and
multi-term fractional differential operator $\mathbf{D}_{t}$ (FDO)
in evolution equations.

 Let $\Omega$ be a bounded domain in $\R^{n}$ with boundary
$\partial\Omega$ belonging to $\C^{2+\alpha},$ $\alpha\in(0,1).$ For
any finite terminal positive $T,$ we set $
\Omega_{T}=\Omega\times(0,T)$ and $
\partial\Omega_{T}=\partial\Omega\times[0,T].$
Bearing in mind the model \eqref{i.4} and denoting the two-term
fractional differential operator with the time-depending
coefficients $\rho_{i}=\rho_{i}(t),$ $i=1,2,$ $\rho_{1}>0$,
\begin{equation}\label{2.2}
\D_{t}=
\begin{cases}
\rho_{1}\D_{t}^{\nu_{1}}-\rho_{2}\D_{t}^{\nu_{2}}\qquad \text{the I
type FDO},\\
\D_{t}^{\nu_{1}}\rho_{1}-\D_{t}^{\nu_{2}}\rho_{2}\qquad \text{the II
type FDO},
\end{cases}\quad  0<\nu_{2}<\nu_1<1,
\end{equation}
 we first consider  the inverse problem dealing with the linear
integro-differential equation with unknown function
$u=u(x,t):\Omega_{T}\to\R,$
\begin{equation}\label{2.1}
\mathbf{D}_{t}u-\mathcal{L}_{1}u-\mathcal{K}*\mathcal{L}_{2}u=g(x,t).
\end{equation}
Here $g$ is a given function, and $\mathcal{K}$ is a prescribed
memory kernel, the symbol $"*"$ stands for the usual time
convolution product
\[
(\eta_{1}*\eta_{2})(t)=\int_{0}^{t}\eta_{1}(t-s)\eta_{2}(s)ds.
\]
 As for $\mathcal{L}_{i},$ $i=1,2,$ they are linear
elliptic operators of the second order with time-depending
coefficients, which will be specified in Section \ref{s2}. The
equation \eqref{2.1} is supplemented with the initial condition and
the Neumann boundary condition
 \begin{equation}\label{2.3}
\begin{cases}
u(x,0)=u_{0}(x)\qquad\qquad\qquad\qquad\quad\text{in}\quad\bar{\Omega},\\
\mathcal{M}u+(1-d)\mathcal{K}*\mathcal{M}u=\varphi(x,t)\qquad\text{on}\quad
\partial\Omega_{T}
\end{cases}
 \end{equation}
 with $d=0$ or $1.$ The functions $u_{0}$ and $\varphi$ are
 specified below.
 Coming to the operator $\mathcal{M},$ it is the first order
 differential operator described in Section \ref{s2}.
 Finally, to complete the
 statement of  the inverse problem (IP), we introduce  the additional nonlocal
 measurement $\psi(t)$ having the form
 \begin{equation}\label{2.4}
\int_{\Omega}u(x,t)dx=\psi(t)
 \end{equation}
for small time $t\in[0,t^{*}],$ $t^{*}<\min\{1,T\}$.

 \noindent\textbf{Statement of the IP:} \textit{for the given
right-hand sides in \eqref{2.2}--\eqref{2.4}, coefficients in the
operators $\D_{t},$ $\mathcal{L}_{i},$ $\mathcal{M}$ and the memory
kernel $\mathcal{K},$ the inverse problem consists in the
identification of the triple $(\nu_1,\nu_2,u)$ such that
$\nu_{i}\in(0,1),$ $i=1,2,$ and $u$ solves the direct problem
\eqref{2.2}-\eqref{2.3} and satisfies the observation \eqref{2.4}
for small time.} \textit{Besides, in the case of $\rho_2$ being
unknown constant in $\mathbf{D}_{t}$, we also discuss IP related
with the reconstruction of $(\nu_1,\nu_2,\rho_2,u)$ by the
measurement \eqref{2.4}.}

\noindent Clearly, the IP in the latter case allows for the complete
identification of all parameters in the fractional operator in the
model \eqref{i.4} and, therefore, eliminates all uncertainties in
the model of oxygen distribution through capillaries based on the
approach utilizing fractional calculus.

In connection of the model \eqref{i.3}, here we also explore the IP
\eqref{2.1}, \eqref{2.3}, \eqref{2.4} with
 $M-$term fractional differential operator
($M>2$)
\begin{equation}\label{3.4*}
\mathbf{D}_{t}=
\begin{cases}
\sum\limits_{i=1}^{M}\rho_{i}(t)\mathbf{D}_{t}^{\nu_{i}}\qquad\quad\text{in
the case of the I type FDO},
\\
\sum\limits_{i=1}^{M}\mathbf{D}_{t}^{\nu_{i}}\rho_{i}(t)
\qquad\quad\text{in the case of the II type FDO},
\end{cases}
\, 0<\nu_{M}<...<\nu_{2}<\nu_1<1,
\end{equation}
 which concerns  the
recovery of $(\nu_1,\nu_i^{*},u)$ or
$(\nu_1,\nu_i^{*},\rho_i^{*},u)$ (if $\rho_{i^{*}}\equiv const.$),
$i^{*}\in \{2,3,...,M\}$. It is worth noting that, in contrast to
derivatives of  integer order,  the II type FDO has more complex
structure than $\mathbf{D}_{t}$ having form of the I type. Indeed,
in the case of fractional Caputo derivatives,   instead of the
well-known Leibniz rule, there is the representation
\[
\mathbf{D}_{t}^{\nu_{i}}(\rho_i(t)
u(x,t))=\rho_{i}(t)\mathbf{D}_{t}^{\nu_{i}}u(x,t)+u(x,0)\mathbf{D}_{t}^{\nu_{i}}\rho_{i}(t)
+\frac{\nu_{i}}{\Gamma(1-\nu_{i})}\int_{0}^{t}\frac{\rho_{i}(t)-\rho_{i}(s)}{(t-s)^{1+\nu_{i}}}[u(x,s)-u(x,0)]ds,
\]
if  $u$ and $\rho_{i}$ have the corresponding continuous fractional
derivatives (see for details \cite[Proposition 5.5]{SV}). Obviously,
even if
$\mathbf{D}_{t}^{\nu_{i}}u,\mathbf{D}_{t}^{\nu_i}\rho_i\in\C([0,T])$,
the last term in the right-hand side of this equality is a
convolution with a non-summable strongly singular kernel
$t^{-\nu_i-1}$ and, hence, overcoming this difficulty  requires
additional independent analytical study.

Lastly, the evolution equation \eqref{2.1} with the II type FDO can
be considered as a linearized version of fully nonlinear equations
similar to
\[
\sum_{i=1}^{M}\D_{t}^{\nu_{i}}(u\rho_{i}(t,u))
-\mathcal{L}_{1}u-\mathcal{K}*\mathcal{L}_{2}u=g(x,t),
\]
their special case  models heat transfer in multilayered materials
with thermosensitive features \cite{LMB}.

Inverse problems concerning with the recovery of the order to the
leading fractional derivative (i.e. $\nu_1$ in our notations) in the
one- or multi-term FDO like \eqref{3.4*} are studied in
\cite{HNWY,HPV,ICM,Ja,JK,KPSV,KPSV2,PSV1,SLJ,ZJY} (see also
references therein), where the different types of additional
observations are tested. It is worth noting that, there are two main
approaches in the above works. The first method is based on
obtaining  explicit formulas for $\nu_1$ in term of local or
nonlocal measurement (for small or large time)
\cite{HNWY,HPV,Ja,KPSV,KPSV2,Po,PSV1}. The second technique starting
from \cite{ICM} deals with the minimization of a certain functional
depending on both the solution of the corresponding direct problem
and given observation either for the terminal time $t=T$ or on the
whole time interval \cite{JK,SLJ,ZJY}. The one of main disadvantages
in the second approach concerns with huge numerical calculations
carried out in multidimensional domains and, besides, this method
needs (as a rule) not only a measurement but also
 all information on the coefficients and the right-hand sides in the direct problem,
 while a calculation by the explicit formula requires only the knowledge of the observation and has been done in a one-dimensional case.

 As for finding parameters in the I type FDO (see \eqref{3.4*}), the
 unique identification of $\rho_i,\nu_i,M$ in $\mathbf{D}_{t}$ in
 equation \eqref{2.1} with time-independent coefficients in the operators and $\mathcal{K}\equiv
 0$ is established in \cite{LLY,LY,LY1,JK}, where a local additional
 measurement either for small time, $t\in(0,t^{*})$, or on whole time interval $[0,T]$
 are considered. However, the key assumptions in these studies dictated by  techniques utilized (such as Laplace transformation, Fourier
 method) are time-independence of all coefficients in the equations
 and nonnegativity of all $\rho_i$, $i=2,...,M$. All this narrows the scope of  application.

 The stability in recovering $\nu_1$ by the local observation
 $u(x_0,t),$ $t\in(0,t^{*}),$ $x_{0}\in\Omega,$ is claimed in \cite{LHY} in the case of
 autonomous one-term fractional diffusion equations, and the similar
 results in the case of \eqref{2.1} with one- and multi-term
 $\mathbf{D}_{t}$ given by \eqref{3.4*} are obtained in
 \cite{KPSV,HPV}, where both local and nonlocal measurements for
 small time are considered.  At last, we mention that the influence
 of noisy observations on the reconstruction of $\nu_1$ in the case
 of one- and multi-term $\mathbf{D}_t$ are discussed in
 \cite{HNWY,KPSV,KPSV2,PSV,HPV}.

 Thus, having said that the picture is now pretty clear, there are still
some unexplored questions not addressed so far in the literature.
Namely, issues concerning to uniqueness, stability, impact of noisy
observation on the calculations of finding scalar parameters in
$\mathbf{D}_t$ having form either \eqref{2.2} or \eqref{3.4*} in the
nonautonomous subdiffusion equation \eqref{2.1} with memory terms
(i.e. $\mathcal{K}\neq 0$) are not studied. Moreover, in the most of
the previous published papers, finding these parameters via discrete
measurements blurred by a noise (that is more natural in real life)
are not discussed. By the authors' best knowledge, this in the case
of the recovery of $\nu_1$ is analyzed in \cite{ICM,KPSV,KPSV2,ZJY}
(for the one-term FDO) and in \cite{PSV1,HPV} (for $\mathbf{D}_{t}$
given by \eqref{3.4*}).

The present paper aims to provide some answers to the above
questions. The main achievements of this work can be summarized  in
the following points.

\noindent$\bullet$ Working in the framework of fractional H\"{o}lder
spaces, exploiting asymptotic behavior of $\psi(t)$ near $t=0$ and
analyzing some integral equations, we derive the explicit formulas
which allow us to identify successively unknown scalar parameters in
$\mathbf{D}_{t}$ under ceratin assumptions on the given data in
\eqref{2.2}-\eqref{3.4*}. In particular, in the case of FDO
appearing in the equation \eqref{i.4} with $a_1=0$, these formulas
are read as
\[
\nu_1=\underset{t\to 0}{\lim}\frac{\ln |\psi(t)-\psi(0)|}{\ln\,
t},\quad \nu_2=\nu_1-\log_{\lambda}\Big| \underset{t\to
0}{\lim}\frac{\mathcal{F}(\lambda t)}{\mathcal{F}(t)}
\Big|\quad\text{with}\quad \lambda\in(0,1),\quad
\tau_2=\frac{\mathcal{F}(t_0)}{\Big(\frac{t^{\nu_1-\nu_2-1}}{\Gamma(\nu_1-\nu_2)}*\mathbf{D}_{t}^{\nu_1}\psi\Big)(t_0)},
\]
where   $t_0\in(0,t^{*}]$ is chosen by the condition
$\mathcal{F}(t_0)\neq 0$, and
\[
\mathcal{F}(t)=\mathbf{D}_{t}^{\nu_1}\psi(t)+\int_{\Omega}k\,dx
+dI^{\nu_1}_{t}\Big(\int_{\partial\Omega}\varphi
dx\Big)-I_{t}^{\nu_1}(a_2\psi)-\int_{\partial\Omega}\varphi(x,t) dx.
\]

\noindent$\bullet$ The asymptotic behavior of $\psi(t)$ along with
the certain regularities of the given functions permit to prove the
unique identification and stability of unknown parameters via the
local measurement for short time interval.

\noindent$\bullet$ Assuming different behavior of an additive noise
at $t=0$, we obtain the error estimates of $|\nu_1-\nu_{1,\delta}|$
and $|\nu_{i^{*}}-\nu_{i^{*},\delta}|$, where $\nu_{1,\delta}$ and
$\nu_{i^{*},\delta}$ are calculated via the noisy measurement
$\psi_{\delta}$. Finally, using Tikhonov regularization scheme with
the quasi-optimality approach, we propose the computational
algorithm to reconstruct unknown parameters $\nu_1,\nu_{i^{*}},
\rho_{i^{*}}$ via the discrete noisy observation $\psi_{\delta}$.
The effectiveness of this computational approach is justified via
several numerical tests.

In fact,  this study offers  a new analytical and numerical approach
for reconstructing unknown parameters in FDO like \eqref{3.4*} or
\eqref{2.2}, which can be used in practice, for example, in the
models described by \eqref{i.3} and \eqref{i.4}. It is worth noting
that in all arguments in this paper, we do not require either the
time independence of the coefficients in \eqref{2.1} or the
positivity of $\rho_i$, $i=2,...,M$,  widespread in the previous
 literature.

\textbf{The paper is organized as follow:} In the next section, we
introduce the functional spaces and notations, and describe the main
results concerning the unique reconstruction of triples
$(\nu_1,\nu_2,u)$ and $(\nu_{1},\nu_{i^{*}},u)$ which are stated in
Theorems \ref{t3.1} and \ref{t3.2}, respectively. Moreover, in
Section \ref{s2}, we establish stability bounds in IPs (Lemma
\ref{l3.2}). The verification of Theorems \ref{t3.1}--\ref{t3.2} and
Lemma \ref{l3.2} is carried out in Sections \ref{s3} and \ref{s4},
respectively. The one-valued reconstruction of
$(\nu_1,\nu_2,\rho_2,u)$ and $(\nu_1,\nu_i^{*},\rho_i^{*},u)$ stated
in Theorems \ref{t5.1} and \ref{t5.2} is discussed in Section
\ref{s5}. The influence of noise on the computations of $\nu_1,$
$\nu_2$ or $\nu_{i^{*}}$ is analyzed in Section \ref{s7}. Finally,
under the assumptions of discrete noise measurement $\psi(t)$,
 the description of the computational
algorithm for regularized recovery of $(\nu_1,\nu_2,\rho_2)$ and
$(\nu_1,\nu_i^{*},\rho_i^{*})$ are described in Section \ref{s8}.
Besides, the effectiveness of this method is demonstrated via
numerical tests  in this section.
\section{Main Results: Reconstruction of the triples $(\nu_1,\nu_{2},u)$ and $(\nu_1,\nu_{i^{*}},u)$}
\label{s2}

\subsection{Functional setting} We study problems
\eqref{2.1}--\eqref{3.4*} in the fractional H\"{o}lder spaces
$\C^{l+\alpha,\frac{l+\alpha}{2}\mu}(\bar{\Omega}_{T})$ (see for
detail \cite[Section 2]{KPV3}), $l=0,1,2,$ $\alpha,\mu\in(0,1),$
endowed with the norm
\[
\|u\|_{\C^{l+\alpha,\frac{l+\alpha}{2}\mu}(\bar{\Omega}_{T})}=
\begin{cases}
\|u\|_{\C([0,T],\C^{l+\alpha}(\bar{\Omega}))}+\sum\limits_{|j|=0}^{l}\langle
D_{x}^{j}u\rangle_{t,\Omega_{T}}^{(\frac{l+\alpha-|j|}{2}\mu)},\qquad\qquad\qquad\qquad\qquad
l=0,1,\\
\|u\|_{\C([0,T],\C^{2+\alpha}(\bar{\Omega}))}+\|\D_{t}^{\mu}u\|_{\C^{\alpha,\frac{\mu\alpha}{2}}(\bar{\Omega}_{T})}+\sum\limits_{|j|=1}^{2}\langle
D_{x}^{j}u\rangle_{t,\Omega_{T}}^{(\frac{2+\alpha-|j|}{2}\mu)},\qquad
l=2,
\end{cases}
\]
where $\langle \cdot\rangle_{t,\Omega_{T}}^{(\alpha)}$ and $\langle
\cdot\rangle_{x,\Omega_{T}}^{(\alpha)}$ stand for the standard
H\"{o}lder seminorms of a function  with respect to time and space
variable, respectively.

In this article, we will also utilize the Hilbert space
$L_{w}^{2}(t_1,t_2)$ of real-valued square integrable functions with
a positive weight $w=w(t)$ on $(t_1,t_2).$ The inner product and the
norm in $L_{w}^{2}(t_1,t_2)$ are defined as
\[
\langle u,v\rangle_{L_{w}^{2}}=\int_{t_1}^{t_2}w(t)v(t)u(t)dt\qquad
\text{and}\quad\|u\|_{L_{w}^{2}(t_1,t_2)}=\sqrt{\int_{t_1}^{t_2}w(t)u^{2}(t)dt}.
\]

\subsection{General assumptions in the model}\label{s2.2}

\begin{description}
\item[h1. Conditions on the operators]
The operators $\mathcal{L}_{i}$ and $\mathcal{M}$  are defined as
\[
\mathcal{L}_{1} = \sum_{ij=1}^{n}\frac{\partial}{\partial x_{i}}
b_{ij}(x,t)\frac{\partial}{\partial x_{j}} +a_{0}(t),\quad
\mathcal{L}_{2} = \sum_{ij=1}^{n}\frac{\partial}{\partial x_{i}}
b_{ij}(x,t)\frac{\partial}{\partial x_{j}} +b_{0}(t),\quad
\mathcal{M}=
-\sum_{ij=1}^{n}b_{ij}(x,t)N_{i}\frac{\partial}{\partial x_{j}}
\]
with $\mathbf{N}=\{N_{1},...,N_{n}\}$ being the unit outward normal
vector to $\Omega$.

\noindent There exist constants $\varrho_{2}>\varrho_1>0,$  such
that
     \begin{equation*}
\varrho_{1}|\xi|^{2}
\leq\sum_{ij=1}^{n}b_{ij}(x,t)\xi_{i}\xi_{j}\leq\varrho_{2}|\xi|^{2}
\quad
   \text{for any}\quad (x,t,\xi)\in\bar{\Omega}_{T}\times \mathbb{R}^{n}.
   \end{equation*}
       \item[h2. Conditions on the FDO] We require that
    $\nu_{1}\in(0,1)$ and the remaining  $\nu_i\in(0,
\nu_1\frac{2-\alpha}{2}).$
 Moreover, there is  a positive constant $\varrho_{3}$ such that
 \[
\rho_{1}(t)\geq \varrho_{3}>0
 \quad \text{for all}\quad t\in[0,T].
 \]
    \item[h3. Regularity of  the coefficients in \eqref{2.1} and \eqref{2.2}] For
    $\nu\in(1,1+\alpha/2),$
    there hold
    \begin{equation*}
  a_{0},
  b_{0} \in
    \C^{\frac{\alpha}{2}}([0,T]),\quad
b_{ij}\in
 \C^{1+\alpha,\frac{1+\alpha}{2}}(\bar{\Omega}_{T}),\, i,j=1,\ldots,n,\quad
 \rho_{k}\in\C^{\nu}([0,T]), \, k=1,\ldots,M.
    \end{equation*}
             \item[h4. Smoothness of the given functions]
       \[
       \mathcal{K}\in L_{1}(0,T),\quad
 \varphi\in\C^{1+\alpha,\frac{1+\alpha}{2}}(\partial\Omega_{T}),\quad
u_{0}\in C^{2+\alpha}(\bar{\Omega}), \quad
g\in\C^{\alpha,\frac{\alpha}{2}}(\bar{\Omega}_{T}).
    \]
    \item[h5. Condition on the additional measurement]
    We assume that $\psi\in\C([0,t^{*}])$ has $M$ fractional Caputo
    derivatives of order  less than $1$, and all these
    derivatives are H\"{o}lder continuous.
    \item[h6. Compatibility conditions] For every
         $x\in\partial\Omega$ at the initial time
     $t=0$, there holds
       \begin{equation*}
\mathcal{M}u_{0}(x)|_{t=0}=\varphi(x,0).
    \end{equation*}
    \end{description}
\subsection{Statement of the main results}\label{s2.3}
Now we are in the position to state our main results. The first of
them concerns to the reconstruction of the triple $(\nu_1,\nu_2,u)$,
the latter  means that we should put $M=2$ in the assumptions above.
Assuming $\Omega=(\mathfrak{l}_{1},\mathfrak{l}_{2})$ in the
one-dimensional case, we first introduce the functions:
\begin{align}\label{3.1}\notag
\mathcal{I}(t)&=\begin{cases}
\int_{\partial\Omega}\varphi(x,t)dx\quad\qquad\text{ if}\qquad n\geq 2,\\
\varphi(\mathfrak{l}_{2},t)-\varphi(\mathfrak{l}_{1},t)\qquad
\text{if}\quad n=1,
\end{cases}\\\notag
\c(t)&=\int_{\Omega}g(x,t)dx-d(\mathcal{K}*\mathcal{I})(t)+a_{0}(t)\psi(t)+(\mathcal{K}*b_{0}\psi)(t)-\mathcal{I}(t),\quad
\c_{0}=\c(0),
\\
\mathcal{F}(t)&=\begin{cases}
\rho_{2}^{-1}(t)[\rho_{1}(t)\D_{t}^{\nu_{1}}\psi(t)-\c(t)]\qquad
\text{in the case of the I type FDO},\\
\D_{t}^{\nu_{1}}(\rho_{1}(t)\psi(t))-\c(t)\qquad\qquad \text{ in the
case of the II type FDO}.
\end{cases}
\end{align}
\begin{theorem}\label{t3.1}
Let positive  $T$ be arbitrary but finite, $\c_{0}\neq 0$ and
$\rho_{2}(t)\neq 0$ for any $t\in[0,t^{*}]$. Under assumptions
h1-h6, the inverse problem \eqref{2.2}-\eqref{2.4} has a unique
solution $(\nu_{1},\nu_{2},u)$. Besides, $\nu_{1}$ and $\nu_{2}$ are
successively computed via formulas:
\begin{equation}\label{3.2*}
\nu_{1}=
\begin{cases}
\underset{t\to
0}{\lim}\frac{\ln|\psi(t)-\int_{\Omega}u_{0}(x)dx|}{\ln
t}\qquad\qquad\qquad\quad\text{in the case of the I type FDO},
\\
\underset{t\to
0}{\lim}\frac{\ln|\rho_{1}(t)\psi(t)-\rho_{1}(0)\int_{\Omega}u_{0}(x)dx|}{\ln
t}\qquad\quad\text{in the case of the II type FDO},
\end{cases}
\end{equation}
and
\begin{equation}\label{3.3*}
\nu_{2}=\nu_{1}-\log_{\lambda}\bigg|\underset{t\to
0}{\lim}\frac{\mathcal{F}(\lambda t)}{\mathcal{F}(t)}\bigg|
\end{equation}
with  $\lambda\in(0,1);$ while the function $u$ is a unique solution
of \eqref{2.2}-\eqref{2.3}, which  has the regularity
\[
u\in\C^{2+\alpha,\frac{2+\alpha}{2}\nu_{1}}(\bar{\Omega}_{T})\quad\text{and}\quad
\D_{t}^{\nu_2}u\in\C^{\alpha,\frac{\alpha\nu_{1}}{2}}(\bar{\Omega}_{T}).
\]
\end{theorem}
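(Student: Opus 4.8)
The plan is to reduce the inverse problem to a single integral identity for $u$ obtained by integrating \eqref{2.1} over $\Omega$, then extract $\nu_1$, $\nu_2$ and finally $u$ in that order. First I would integrate the equation $\mathbf{D}_t u-\mathcal{L}_1u-\mathcal{K}*\mathcal{L}_2u=g$ over $x\in\Omega$. Using the divergence structure of $\mathcal{L}_1,\mathcal{L}_2$ and the definition of $\mathcal{M}$, the second-order terms produce boundary integrals $\int_{\partial\Omega}\mathcal{M}u\,dx$ and $\int_{\partial\Omega}\mathcal{M}u$ convolved with $\mathcal{K}$; the boundary condition in \eqref{2.3} converts these into the prescribed data $\varphi$ (this is exactly where the particular combination $\mathcal{M}u+(1-d)\mathcal{K}*\mathcal{M}u=\varphi$ and the switch $d\in\{0,1\}$ are used). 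Commuting $\mathbf{D}_t$ (in either type) with $\int_\Omega\cdot\,dx$ and invoking \eqref{2.4}, one arrives at a scalar identity of the schematic form
\begin{equation*}
\rho_1(t)\D_{t}^{\nu_1}\psi(t)-\rho_2(t)\D_{t}^{\nu_2}\psi(t)=\c(t)
\end{equation*}
in the I-type case (and the analogous II-type identity with $\D_t^{\nu_i}(\rho_i\psi)$), where $\c$ is precisely the quantity defined in \eqref{3.1}, assembled from $g$, $\varphi$, $a_0\psi$, $b_0\psi$ and $\mathcal{K}$. Rearranging gives $\D_{t}^{\nu_2}\psi=\mathcal{F}$ with $\mathcal{F}$ as in \eqref{3.1}.

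Next I would exploit the short-time asymptotics of the Caputo derivative. Writing $v(t)=\psi(t)-\psi(0)$ (or $\rho_1(t)\psi(t)-\rho_1(0)\psi(0)$ in the II-type case), and using the regularity h5 together with $\c_0\neq0$, the leading identity forces $\D_t^{\nu_1}\psi(t)\sim \c_0/\rho_1(0)\neq0$ as $t\to0$, hence by the mapping property of the Riemann–Liouville integral $v(t)=I_t^{\nu_1}\D_t^{\nu_1}\psi(t)\sim \frac{\c_0}{\rho_1(0)\Gamma(1+\nu_1)}\,t^{\nu_1}$. Taking $\ln|v(t)|/\ln t$ and letting $t\to0$ isolates $\nu_1$, which is formula \eqref{3.2*}; I would be careful that the error term is $o(t^{\nu_1})$ using H\"older continuity of $\D_t^{\nu_1}\psi$, so the logarithmic ratio genuinely converges. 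Once $\nu_1$ (and hence $\c$, $\mathcal{F}$) is known, $\D_t^{\nu_2}\psi=\mathcal{F}$ gives, by the same integration argument applied to $\mathcal{F}$ and the fact that $\mathcal{F}(t)\to\c$-type constant is \emph{nonzero} (this needs checking — it follows once we know the leading term is $\rho_1(0)\c_0/\rho_1(0)-\c_0$... more precisely from the structure of $\mathcal{F}$ and $\c_0\neq0$), the scaling law $\mathcal{F}(\lambda t)/\mathcal{F}(t)\to\lambda^{\nu_1-\nu_2}$, which upon taking $\log_\lambda$ yields \eqref{3.3*}. The inequality $\nu_2<\nu_1(2-\alpha)/2$ from h2 is what guarantees the remainder in the asymptotics of $\mathcal{F}$ is lower order than $t^{-(\nu_1-\nu_2)}$, so the limit is clean.

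With $\nu_1,\nu_2$ now fixed \emph{numbers}, the remaining task is to solve the direct problem \eqref{2.2}–\eqref{2.3} and verify the stated regularity $u\in\C^{2+\alpha,\frac{2+\alpha}{2}\nu_1}(\bar\Omega_T)$, $\D_t^{\nu_2}u\in\C^{\alpha,\frac{\alpha\nu_1}{2}}(\bar\Omega_T)$. For this I would appeal to the linear theory for multi-term subdiffusion equations with memory in fractional H\"older spaces (the framework cited from \cite{KPV3} and related works): the operator $\rho_1\D_t^{\nu_1}-\mathcal{L}_1$ is the principal part, the lower-order term $\rho_2\D_t^{\nu_2}$ is subordinate precisely because $\nu_2<\nu_1(2-\alpha)/2$, and $\mathcal{K}*\mathcal{L}_2$ is a Volterra perturbation handled by a fixed-point/Neumann-series argument on a short interval and then continued to $[0,T]$. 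The II-type FDO needs the Leibniz-type expansion quoted in the introduction to peel off the strongly singular convolution term; that extra term must be shown to define a bounded operator on the relevant H\"older space — I expect \textbf{this to be the main obstacle}, since the kernel $t^{-1-\nu_i}$ is non-summable and only the difference $[u(x,s)-u(x,0)]$ together with the H\"older modulus of $u$ in time makes the integral converge, so the a priori estimate has to be set up with the weight structure of $\C^{l+\alpha,\frac{l+\alpha}{2}\nu_1}$ from the outset. Finally, uniqueness of the triple follows because $\nu_1,\nu_2$ are determined by the explicit limits (hence unique), and for fixed orders the direct problem has a unique solution by the linear well-posedness; consistency of the so-constructed $u$ with the observation \eqref{2.4} is automatic since the whole derivation is an equivalence as long as $\c_0\neq0$ and $\rho_2(t)\neq0$ on $[0,t^*]$.
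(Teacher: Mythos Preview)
Your reduction to the scalar identity $\rho_1\D_t^{\nu_1}\psi-\rho_2\D_t^{\nu_2}\psi=\c$ (resp.\ its type-II analogue), the extraction of $\nu_1$ from the leading asymptotic of $\psi(t)-\psi(0)$, and the appeal to the linear H\"older theory for the direct problem all match the paper's scheme; the paper outsources formula~\eqref{3.2*} to \cite{HPV} and the direct problem to Lemma~\ref{l4.3} (i.e.\ \cite{PSV}) exactly as you anticipate.

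The genuine gap is in your derivation of \eqref{3.3*}. You assert that $\mathcal{F}(t)$ tends to a ``$\c$-type constant'' which is ``nonzero'', then compute that constant as $\rho_1(0)\c_0/\rho_1(0)-\c_0=0$ and hand-wave. In fact $\mathcal{F}(0)=0$ is \emph{forced}, and this is the key observation you are missing: since $\mathcal{F}(t)=\D_t^{\nu_2}\psi(t)=(\omega_{\nu_1-\nu_2}*\D_t^{\nu_1}\psi)(t)$ (Proposition~\ref{p3.1}), the convolution vanishes at $t=0$. Plugging $\mathcal{F}(0)=0$ back into $\mathcal{F}=\rho_2^{-1}(\rho_1\D_t^{\nu_1}\psi-\c)$ yields $\D_t^{\nu_1}\psi(0)=\c_0/\rho_1(0)\neq 0$, and then the H\"older continuity of $\D_t^{\nu_1}\psi$ (Lemma~\ref{l4.3}) gives
\[
\mathcal{F}(t)=\frac{\c_0/\rho_1(0)}{\Gamma(1+\nu_1-\nu_2)}\,t^{\nu_1-\nu_2}+O\bigl(t^{\nu_1-\nu_2+\alpha\nu_1/2}\bigr),
\]
whence $\mathcal{F}(\lambda t)/\mathcal{F}(t)\to\lambda^{\nu_1-\nu_2}$ as you want. (Note: assumption~h2 is \emph{not} what controls this remainder; $\alpha\nu_1/2>0$ is enough. h2 is used for the direct problem.) The type-II case works the same way after checking $\D_t^{\nu_1}(\rho_i\psi)(0)=\rho_i(0)\D_t^{\nu_1}\psi(0)$ via the Leibniz expansion you mention.

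For comparison, the paper does \emph{not} argue via direct asymptotics of $\mathcal{F}$. It rewrites the identity as $(\omega_{\nu_1-\nu_2}*v)=\mathcal{F}_0$ with $v=\D_t^{\nu_1}\psi$, convolves with the Mittag--Leffler resolvent $s_{n,\theta}(t)=E_\theta(-nt^\theta)$ satisfying $s_{n,\theta}+n\,\omega_\theta*s_{n,\theta}=1$, differentiates, and reduces to Lemma~\ref{l.convolution}; uniqueness (Lemma~\ref{l4.4}) is then proved by an independent contradiction argument with the same machinery, rather than read off from the explicit limits as you do. Once you fix the asymptotic of $\mathcal{F}$ as above, your route is shorter; the paper's route packages the argument into a reusable Lemma~\ref{l4.1} that is recycled for Theorem~\ref{t5.1} and Lemma~\ref{l4.4}.
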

The next claim deals with the identification of the triple
$(\nu_1,\nu_{i^{*}},u)$ in the case of $M$-term fractional
differential operator \eqref{3.4*} (i.e. $M>2$).
\begin{theorem}\label{t3.2}
Let $M>2$, any positive $T$ be finite and  assumptions h1-h6 hold.
If $\c_{0}\neq 0,$ and $\rho_{i^{*}}(t)\neq 0$ for all
$t\in[0,t^{*}]$, then IP \eqref{2.1}--\eqref{3.4*} admits a unique
solution $(\nu_{1},\nu_{i^{*}},u),$ where $\nu_1$ is calculated by
\eqref{3.2*}, while $\nu_{i^{*}}$ is computed via \eqref{3.3*} with
\[
\mathcal{F}(t)=
\begin{cases}
\rho_{i^{*}}^{-1}(t)\big[\c(t)-\sum\limits_{j=1,j\neq
i^{*}}^{M}\rho_{j}(t)\D_{t}^{\nu_{j}}\psi(t)\big]\qquad
\text{in the case of the I type FDO},\\
\c(t)-\sum\limits_{j=1,j\neq
i^{*}}^{M}\D_{t}^{\nu_{j}}(\rho_{j}(t)\psi(t))\qquad\qquad \text{in
the case of the II type FDO}.
\end{cases}
\]
Besides, the function
$u\in\C^{2+\alpha,\frac{2+\alpha}{2}\nu_{1}}(\bar{\Omega}_{T})$
solves the direct problem \eqref{2.1}, \eqref{3.4*}, \eqref{2.3}
and, besides,
$\D_{t}^{\nu_2}u\in\C^{\alpha,\frac{\alpha\nu_{1}}{2}}(\bar{\Omega}_{T})$.
\end{theorem}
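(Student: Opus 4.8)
The plan is to reduce the inverse problem, just as in the two-term case of Theorem~\ref{t3.1}, to a scalar asymptotic analysis of $\psi$ near $t=0$; the only structural novelty for $M>2$ is that $\mathcal{F}$ now gathers $M-1$ (already known) fractional terms instead of one. So suppose $(\nu_1,\nu_{i^*},u)$ solves the IP with $u$ in the asserted regularity class. Integrating \eqref{2.1} over $\Omega$, applying the divergence theorem to the elliptic part (so $\int_\Omega\mathcal{L}_1u\,dx=-\int_{\partial\Omega}\mathcal{M}u\,dS+a_0(t)\psi(t)$, and similarly for $\mathcal{L}_2$), inserting the integrated Neumann condition from \eqref{2.3} (the regimes $d=1$ and $d=0$ absorbing the memory boundary term in the two ways already built into the definition of $\c$ in \eqref{3.1}), and commuting $\int_\Omega$ with the convolution and with each $\D_t^{\nu_j}$---licit because $u\in\C^{2+\alpha,\frac{2+\alpha}{2}\nu_1}(\bar{\Omega}_{T})$ and $\D_t^{\nu_1}u\in\C^{\alpha,\frac{\alpha\nu_1}{2}}(\bar{\Omega}_{T})$---one obtains on $[0,t^*]$ the scalar identity
\[
\sum_{j=1}^{M}\rho_j(t)\D_t^{\nu_j}\psi(t)=\c(t)\quad(\text{I type}),\qquad
\sum_{j=1}^{M}\D_t^{\nu_j}\bigl(\rho_j(t)\psi(t)\bigr)=\c(t)\quad(\text{II type}),
\]
together with $\psi(0)=\int_\Omega u_0\,dx$ from the initial condition.

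First I would recover $\nu_1$. Set $h_1:=\D_t^{\nu_1}\psi$ in the I type and $h_1:=\D_t^{\nu_1}(\rho_1\psi)$ in the II type; by h5 (and, in the II type, by the Leibniz-type formula for the Caputo derivative recalled in the introduction, whose extra terms are continuous) $h_1$ is H\"older continuous. Evaluating the scalar identity as $t\to0^+$ and using that for every $j\ge2$ one has $\nu_j<\nu_1$, hence $\D_t^{\nu_j}\psi=I_t^{\nu_1-\nu_j}(\D_t^{\nu_1}\psi)\to0$ while the product-rule remainders also vanish, gives $\rho_1(0)(\D_t^{\nu_1}\psi)(0)=\c_0$ in both cases; since $\rho_1\ge\varrho_{3}>0$ and $\c_0\neq0$, the value $(\D_t^{\nu_1}\psi)(0)=\c_0/\rho_1(0)$ is nonzero, and likewise $h_1(0)\neq0$. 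H\"older continuity of $h_1$ then gives the short-time expansion $\rho_1(t)\psi(t)-\rho_1(0)\psi(0)=I_t^{\nu_1}h_1(t)=\frac{h_1(0)}{\Gamma(1+\nu_1)}t^{\nu_1}+o(t^{\nu_1})$ (with $\rho_1\equiv1$ understood in the I type), whence $\ln|\rho_1(t)\psi(t)-\rho_1(0)\psi(0)|/\ln t\to\nu_1$; this is formula \eqref{3.2*} and shows $\nu_1$ is uniquely pinned down by the data.

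Next, with $\nu_1$ known and all $\nu_j$, $j\neq1,i^*$, prescribed, the function $\mathcal{F}$ of the statement is computable from $\psi$ and the data, and the scalar identity shows it equals $\rho_{i^*}(t)\D_t^{\nu_{i^*}}\psi(t)$ (I type) or $\D_t^{\nu_{i^*}}(\rho_{i^*}(t)\psi(t))$ (II type). Using $\D_t^{\nu_{i^*}}\psi=I_t^{\nu_1-\nu_{i^*}}(\D_t^{\nu_1}\psi)$ (valid since $0<\nu_{i^*}<\nu_1$) and, in the II type, isolating the term $\rho_{i^*}(t)\D_t^{\nu_{i^*}}\psi(t)$ in the Leibniz-type formula while checking that $\psi(0)\D_t^{\nu_{i^*}}\rho_{i^*}(t)=O(t^{1-\nu_{i^*}})$ and the strongly singular integral term is $O(t^{1+\nu_1-\nu_{i^*}})$---both $o(t^{\nu_1-\nu_{i^*}})$ because $\nu_1<1$---one obtains
\[
\mathcal{F}(t)=\frac{\rho_{i^*}(0)\,\c_0/\rho_1(0)}{\Gamma(1+\nu_1-\nu_{i^*})}\,t^{\nu_1-\nu_{i^*}}+o\bigl(t^{\nu_1-\nu_{i^*}}\bigr),\qquad t\to0^+,
\]
with nonzero leading coefficient, since $\rho_{i^*}(t)\neq0$ on $[0,t^*]$ and $\c_0\neq0$. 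Hence $\mathcal{F}(t)\neq0$ for small $t>0$ and $\mathcal{F}(\lambda t)/\mathcal{F}(t)\to\lambda^{\nu_1-\nu_{i^*}}$, which is \eqref{3.3*} and forces uniqueness of $\nu_{i^*}$. Finally, once the orders are fixed, $u$ must be the unique solution of the direct problem \eqref{2.1}, \eqref{3.4*}, \eqref{2.3} in $\C^{2+\alpha,\frac{2+\alpha}{2}\nu_1}(\bar{\Omega}_{T})$ with $\D_t^{\nu_2}u\in\C^{\alpha,\frac{\alpha\nu_1}{2}}(\bar{\Omega}_{T})$ (this uses $\nu_j<\nu_1\frac{2-\alpha}{2}$ via the well-posedness of the direct problem), giving uniqueness of the triple; existence follows by taking that $u$, setting $\tilde\psi:=\int_\Omega u\,dx$, noting that the integration above makes $\tilde\psi$ satisfy the same scalar identity and initial value as $\psi$, and concluding $\tilde\psi\equiv\psi$ on $[0,t^*]$ by a Gronwall-type uniqueness argument for linear fractional equations, so that the observation \eqref{2.4} holds.

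The hard part is the quantitative short-time analysis: converting boundedness and H\"older continuity of $h_1$, together with $\c_0\neq0$, into expansions of $\rho_1\psi-\rho_1(0)\psi(0)$ and of $\mathcal{F}$ with genuinely $o(\cdot)$ (not merely $O(\cdot)$) remainders, which relies on sharp estimates for Riemann--Liouville integrals $I_t^\beta$ applied to H\"older functions; and, for the II type FDO, proving that the term carrying the non-summable kernel $t^{-1-\nu_{i^*}}$ produced by the Leibniz-type formula is asymptotically negligible compared with $t^{\nu_1-\nu_{i^*}}$---the genuinely new difficulty relative to the constant-coefficient literature. By contrast, the reduction of the PDE to the scalar identity (including both boundary regimes and the memory convolution) and the well-posedness input for the direct problem are comparatively routine.
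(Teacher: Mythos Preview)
Your argument is correct and in fact more streamlined than the paper's. The paper (Section~\ref{s3}) reduces the scalar identity to a convolution equation $(\omega_{\nu_1-\nu_{i^*}}*v)(t)=\mathcal{F}_0(t)$ and then invokes Lemma~\ref{l4.1}, which convolves with the Mittag--Leffler kernel $s_{n,\theta}(t)=E_\theta(-nt^\theta)$, differentiates, and appeals to Lemma~\ref{l.convolution}; this machinery is built to handle the case $v(0)=0$ as well as $v(0)\neq0$ (cf.\ the paper's case split ``(i) either $\D_t^{\nu_1}\psi(0)=0$ \dots''). You instead observe that, by Proposition~\ref{p3.1}, $\D_t^{\nu_j}\psi(0)=0$ for every $j\ge2$, whence the scalar identity at $t=0$ forces $(\D_t^{\nu_1}\psi)(0)=\c_0/\rho_1(0)\neq0$; this immediately yields the one-term asymptotic $\mathcal{F}(t)\sim c\,t^{\nu_1-\nu_{i^*}}$ with nonzero $c$, and \eqref{3.3*} follows by taking the ratio $\mathcal{F}(\lambda t)/\mathcal{F}(t)$. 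Your route bypasses Lemma~\ref{l4.1} entirely and shows that the paper's option~(i) in Section~\ref{s3.2} is in fact vacuous. What the paper's approach buys is a self-contained identification lemma (Lemma~\ref{l4.1}) reusable whenever $v(0)$ is not known to be nonzero; what your approach buys is transparency and the elimination of a superfluous case analysis.

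Two small points. First, a coefficient slip: in the I~type case $\mathcal{F}(t)=\D_t^{\nu_{i^*}}\psi(t)$ (not $\rho_{i^*}\D_t^{\nu_{i^*}}\psi$), so the leading constant is $\dfrac{\c_0/\rho_1(0)}{\Gamma(1+\nu_1-\nu_{i^*})}$ without the extra $\rho_{i^*}(0)$; your formula is correct only in the II~type case. This does not affect the argument, since nonvanishing is all that is used. Second, your closing ``Gronwall-type uniqueness'' step for existence is a bit loose: the observation $\psi$ is not known \emph{a priori} to satisfy the scalar identity for the computed orders---that is a consequence of h5 together with the assumption that the IP is well posed. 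The paper handles this the same way (existence of $u$ from Lemma~\ref{l4.3}, uniqueness via Lemma~\ref{l4.4}), so you are not missing anything the paper supplies; just be aware that the Gronwall sentence does slightly more than the argument as written actually justifies.
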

\noindent The following assertion is related to the dependence of a
solution $u$ on the orders $\nu_{i}$. It is worth noting that, this
issue in the case of $\D_{t}$ being a single-term fractional
differential operator (i.e. $\rho_{i}\equiv 0,$ $i=2,\ldots M$) is
discussed in \cite[Lemma 1]{KPSV}. Actually, this result can be
easily extended (with slightly modifications in the arguments) to
the case of $\D_{t}$ having the form either \eqref{2.2} or
\eqref{3.4*}. Therefore, the stability only in the case of
$\nu_{i}$, $i\neq 1,$ is still unexplored question.
\begin{lemma}\label{l3.2}
Let $M\geq 2$, $\alpha,\nu_{1}\in(0,1)$,  and $i\in\{2,\ldots,M\}$
be fixed, and let
$0<\beta_{1,i}<\beta_{2,i}<\frac{2-\alpha}{2}\nu_{1}$. We assume
that assumptions of either Theorem \ref{t3.1} if $M=2$ or Theorem
\ref{t3.2} if $M>2$ hold. If $u_{1,i}=u_{1,i}(x,t)$ and
$u_{2,i}=u_{2,i}(x,t)$ solve \eqref{2.1}, \eqref{2.3}   where
$\mathbf{D}_{t}$ given by either \eqref{2.2} ($M=2$) or \eqref{3.4*}
($M>2$) with $\nu_{i}$ being replaced by $\beta_{1,i}$ and
$\beta_{2,i},$ respectively, then there is the estimate
\[
\|u_{1,i}-u_{2,i}\|_{\C^{2+\alpha,\frac{2+\alpha}{2}\nu_{1}}(\bar{\Omega}_{T})}\leq
C(\beta_{2,i}-\beta_{1,i})[\|u_{0}\|_{\C^{2+\alpha}(\bar{\Omega})}+\|g\|_{\C^{\alpha,\frac{\nu_{1}\alpha}{2}}(\bar{\Omega}_{T})}
+
\|\varphi\|_{\C^{1+\alpha,\frac{1+\alpha}{2}\nu_{1}}(\partial\Omega_{T})}
]
\]
with the positive value $C$ being independent of
$(\beta_{2,i}-\beta_{1,i})$.
\end{lemma}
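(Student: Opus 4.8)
The plan is to reduce the estimate for $u_{1,i}-u_{2,i}$ to a stability estimate for a single linear subdiffusion problem whose right-hand side is controlled by $\beta_{2,i}-\beta_{1,i}$. Write $w=u_{1,i}-u_{2,i}$. Since $u_{1,i}$ and $u_{2,i}$ solve \eqref{2.1}, \eqref{2.3} with the same data but with $\nu_i$ replaced by $\beta_{1,i}$ and $\beta_{2,i}$ respectively, subtracting the two equations gives
\begin{equation*}
\mathbf{D}_{t}^{(1)}w-\mathcal{L}_{1}w-\mathcal{K}*\mathcal{L}_{2}w
=\big(\mathbf{D}_{t}^{(2)}-\mathbf{D}_{t}^{(1)}\big)u_{2,i}
=:\mathcal{R}(x,t),
\end{equation*}
where $\mathbf{D}_{t}^{(k)}$ denotes the fractional operator \eqref{2.2} (or \eqref{3.4*}) carrying $\beta_{k,i}$ in the $i$-th slot, and the difference of the two operators isolates only the $i$-th term, so $\mathcal{R}$ involves only $\rho_i\big(\mathbf{D}_t^{\beta_{1,i}}-\mathbf{D}_t^{\beta_{2,i}}\big)u_{2,i}$ in the I type case, and the analogous expression with the product rule identity from the excerpt in the II type case. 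Because $w$ has zero initial data $w(x,0)=0$ and zero boundary data $\mathcal{M}w+(1-d)\mathcal{K}*\mathcal{M}w=0$ on $\partial\Omega_T$, the linear direct-problem well-posedness (the maximal-regularity estimate in $\C^{2+\alpha,\frac{2+\alpha}{2}\nu_1}(\bar\Omega_T)$ for \eqref{2.1}, \eqref{2.3}, which is used implicitly to assert the regularity of $u$ in Theorems~\ref{t3.1}--\ref{t3.2}) yields
\begin{equation*}
\|w\|_{\C^{2+\alpha,\frac{2+\alpha}{2}\nu_{1}}(\bar{\Omega}_{T})}
\le C\,\|\mathcal{R}\|_{\C^{\alpha,\frac{\nu_{1}\alpha}{2}}(\bar{\Omega}_{T})},
\end{equation*}
so it remains to bound the right-hand side by $C(\beta_{2,i}-\beta_{1,i})$ times the data norms.

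The core estimate is therefore: for a fixed function $v$ (here $v=u_{2,i}$) in the regularity class $\C^{2+\alpha,\frac{2+\alpha}{2}\nu_1}(\bar\Omega_T)$ with $v(\cdot,0)=u_0$, and for two orders $\beta_1<\beta_2$ in $\big(0,\tfrac{2-\alpha}{2}\nu_1\big)$, one has
\begin{equation*}
\big\|\mathbf{D}_t^{\beta_1}v-\mathbf{D}_t^{\beta_2}v\big\|_{\C^{\alpha,\frac{\nu_1\alpha}{2}}(\bar\Omega_T)}
\le C(\beta_2-\beta_1)\,\|v\|_{\C^{2+\alpha,\frac{2+\alpha}{2}\nu_1}(\bar\Omega_T)}.
\end{equation*}
To prove this I would write, using $v(\cdot,s)-v(\cdot,0)=\int_0^s \partial_\tau v(\cdot,\tau)\,d\tau$ (valid because $v$ has a time derivative that is a fractional-order H\"older function — here the hypothesis $\beta_{2,i}<\tfrac{2-\alpha}{2}\nu_1<\nu_1$ ensures $v$ is genuinely differentiable in time on $(0,T]$ with an integrable singularity controlled by the norm), the integral representation
\begin{equation*}
\mathbf{D}_t^{\beta}v(\cdot,t)=\frac{1}{\Gamma(1-\beta)}\int_0^t (t-s)^{-\beta}\,\partial_s v(\cdot,s)\,ds,
\end{equation*}
and then differentiate the kernel $(t-s)^{-\beta}/\Gamma(1-\beta)$ with respect to $\beta$: the derivative is $(t-s)^{-\beta}\big[-\ln(t-s)+\Gamma'(1-\beta)/\Gamma(1-\beta)\big]/\Gamma(1-\beta)$, so by the mean value theorem
\begin{equation*}
\mathbf{D}_t^{\beta_1}v-\mathbf{D}_t^{\beta_2}v=(\beta_2-\beta_1)\int_0^t \partial_\theta\!\Big(\tfrac{(t-s)^{-\theta}}{\Gamma(1-\theta)}\Big)\Big|_{\theta=\theta^{*}(t,s)}\,\partial_s v(\cdot,s)\,ds
\end{equation*}
for some intermediate $\theta^*\in(\beta_1,\beta_2)$. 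The factor $(t-s)^{-\theta}|\ln(t-s)|$ is integrable near $s=t$ for $\theta\le\beta_2<1$, and since $t^*<1$ the logarithm stays bounded by a constant times a small negative power; combined with the bound $|\partial_s v(\cdot,s)|\le C s^{\frac{\nu_1\alpha}{2}-1+\frac{2-\alpha}{2}\nu_1}\cdot(\text{norm})$ (or more simply $|\partial_s v(\cdot,s)|\le C s^{\sigma-1}\|v\|$ with $\sigma=\tfrac{2-\alpha}{2}\nu_1-\beta_2>0$, which is exactly where the constraint $\beta_i<\tfrac{2-\alpha}{2}\nu_1$ enters), the $s$-integral converges and is bounded uniformly in $t\in[0,t^*]$. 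For the H\"older seminorms in $t$ and $x$ one repeats the argument on difference quotients, splitting the integral into a near-diagonal part and a far part in the standard way; the $x$-regularity is immediate since $\beta$-differentiation commutes with $D_x^j$ and the same bounds apply to $\partial_s D_x^j v$.

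The main obstacle is the uniformity of all constants as $\beta_2\to\beta_1$ and the sharp tracking of the boundary singularity at $s=0$: one must verify that the constant $C$ produced by integrating $(t-s)^{-\theta}|\ln(t-s)|\,s^{\sigma-1}$ does not blow up as $\theta$ ranges over $(\beta_1,\beta_2)\subset(0,\tfrac{2-\alpha}{2}\nu_1)$ — this is where the strict separation $\beta_{2,i}<\tfrac{2-\alpha}{2}\nu_1$ is essential, since it keeps $\sigma$ bounded below and keeps $\theta$ bounded away from $1$, so the Beta-type integral $\int_0^t(t-s)^{-\theta}s^{\sigma-1}ds\le B(1-\theta,\sigma)\,t^{\sigma-\theta}$ has a constant that is uniformly bounded on the relevant parameter range, and the extra $|\ln(t-s)|$ only costs a harmless factor $|\ln t^*|$ or can be absorbed by slightly decreasing the exponent. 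A secondary technical point is the H\"older-in-time seminorm estimate near $t=0$, which requires the standard but slightly delicate splitting; and in the II type case one additionally must estimate the strongly-singular convolution term in the product-rule identity quoted in the introduction, but since that term already appears (and is controlled) in the analysis underlying Theorems~\ref{t3.1}--\ref{t3.2}, one may invoke those bounds. Once the core estimate is in place, assembling the final inequality is routine: bound $\|u_{2,i}\|_{\C^{2+\alpha,\frac{2+\alpha}{2}\nu_1}}$ by the data norms via the direct-problem estimate (applied to $u_{2,i}$ with order $\beta_{2,i}$, the constant uniform for $\beta_{2,i}$ in a compact subset of $(0,\tfrac{2-\alpha}{2}\nu_1)$), and combine with the two displayed inequalities above.
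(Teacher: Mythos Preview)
Your overall architecture coincides with the paper's: form $U=u_{2,i}-u_{1,i}$, observe that $U$ solves the linear problem \eqref{2.1}, \eqref{2.3} (with $\nu_i=\beta_{1,i}$ in $\D_t$) with homogeneous initial and boundary data and right-hand side $\bar g_0$ equal to the $i$-th term difference applied to $u_{2,i}$, invoke the maximal-regularity bound of Lemma~\ref{l4.3}, and reduce to estimating $\|\bar g_0\|_{\C^{\alpha,\alpha\nu_1/2}}$ by $(\beta_2-\beta_1)$ times the $\C^{2+\alpha,\frac{2+\alpha}{2}\nu_1}$-norm of $u_{2,i}$, which in turn is bounded by the data via Lemma~\ref{l4.3} again.

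The gap is in your core estimate. You pass to the representation $\D_t^{\beta}v(t)=\frac{1}{\Gamma(1-\beta)}\int_0^t(t-s)^{-\beta}\partial_s v(\cdot,s)\,ds$ and then differentiate the kernel in $\beta$ via the mean value theorem. But this formula requires $v=u_{2,i}$ to be absolutely continuous in $t$, and the space $\C^{2+\alpha,\frac{2+\alpha}{2}\nu_1}(\bar\Omega_T)$ only guarantees $\D_t^{\nu_1}v\in\C^{\alpha,\alpha\nu_1/2}$. Writing $v(t)-v(0)=I_t^{\nu_1}(\D_t^{\nu_1}v)$ and trying to differentiate, one finds that $\partial_t v$ is a continuous (or even locally integrable) function on $(0,T]$ only when $\nu_1+\tfrac{\alpha\nu_1}{2}>1$, i.e.\ $\nu_1>\tfrac{2}{2+\alpha}$; for smaller $\nu_1$ your integral representation is not available. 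Even when it is, the exponent you quote, $\sigma=\tfrac{2-\alpha}{2}\nu_1-\beta_2$, is not what governs $|\partial_s v|$ (the leading behaviour is $s^{\nu_1-1}$, from the $t^{\nu_1}$ term in the small-time expansion), and the constraint $\beta_{2,i}<\tfrac{2-\alpha}{2}\nu_1$ does not enter at that point of the argument.

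The paper bypasses the classical time derivative entirely by using Proposition~\ref{p3.1}: since $\D_t^{\nu_1}u_2$ is continuous, one has $\D_t^{\beta}u_2=\omega_{\nu_1-\beta}*\D_t^{\nu_1}u_2$ for every $\beta<\nu_1$, so
\[
\bar g_0=\rho_2\bigl([\omega_{\nu_1-\beta_2}-\omega_{\nu_1-\beta_1}]*\D_t^{\nu_1}u_2\bigr)=A_1+A_2,
\]
with $A_1$ carrying the factor $\Gamma(\nu_1-\beta_2)^{-1}-\Gamma(\nu_1-\beta_1)^{-1}=O(\beta_2-\beta_1)$ in front of $\D_t^{\beta_2}u_2$, and $A_2$ the convolution of $\D_t^{\nu_1}u_2\in\C^{\alpha,\alpha\nu_1/2}$ against the kernel $k(t)=t^{\nu_1-\beta_2-1}-t^{\nu_1-\beta_1-1}$, estimated by a cited result on such power-difference kernels acting on H\"older functions. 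Here is where $\beta_{2,i}<\tfrac{2-\alpha}{2}\nu_1$ is actually used: it forces the convolution exponent $\nu_1-\beta_j>\tfrac{\alpha\nu_1}{2}$, so that $\omega_{\nu_1-\beta_j}*(\cdot)$ maps $\C^{\alpha,\alpha\nu_1/2}$ into itself with a uniform constant. If you replace your $\partial_s v$ step by this convolution identity, your mean-value-theorem idea on the kernel (now applied to $\omega_{\nu_1-\beta}$ rather than $\omega_{1-\beta}$) survives and the argument closes.
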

\noindent Proofs of Theorem \ref{t3.1} and Lemma \ref{l3.2} are
given in Sections \ref{s4} and \ref{s5}, respectively. As for
Theorem \ref{t3.2}, its verification is carried out with slightly
modification in the arguments of Section \ref{s4}, and therefore we
omit it here.

\section{Proof of Theorem \ref{t3.1}}\label{s3}

\noindent To prove this claim, we will incorporate the technique
consisting in two main steps. At  the first stage, we focus on the
existence of the triple $(\nu_{1},\nu_{2},u)$. Concerning the orders
$\nu_{1}$ and $\nu_{2}$, we need to validate formulas \eqref{3.2*}
and \eqref{3.3*}. We notice that formula \eqref{3.2*} recovering
$\nu_1$ has been proved in \cite[Theorem 2.1]{HPV}. Thus, here we
are just left to verify  \eqref{3.3*} to find  $\nu_{2}$ and, then,
substituting the searched orders to \eqref{2.1}, to prove the
 classical global solvability of the direct problem
\eqref{2.2}-\eqref{2.3}. As for the  formula of $\nu_2$, using the
reconstructed value $\nu_{1}$ and integrating the equation
\eqref{2.1} over $\Omega$, we reduce relations
\eqref{2.2}-\eqref{2.4} to the equality with a weaker kernel
\begin{equation*}\label{3.1*}
(\omega_{\nu_1-\nu_2}*v) (t)=v(t)+\mathcal{F}_{1}(t)\qquad\text{for
each}\quad t\in[0,t^{*}],
\end{equation*}
where
$\omega_{\theta}=\omega_{\theta}(t)=\frac{t^{\theta-1}}{\Gamma(\theta)},$
$\theta\in(0,1)$, and the function $v$ is defined via the term
$\D_{t}^{\nu_1}(\rho_{2}\psi)$ or $\D_{t}^{\nu_{1}}\psi$, while the
function $\mathcal{F}_{1}(t)$ is represented with a linear
combination of $\D_{t}^{\nu_{1}}\psi$ and $\mathcal{F}(t)$. After
that, under additional assumptions on the given functions, we show
that $\nu_2$ given by \eqref{3.3*} satisfies \eqref{3.1*}. Finally,
exploiting the searched orders $\nu_1$ and $\nu_2$ in
\eqref{2.2}-\eqref{2.3}, we solve the direct problem to find the
unknown function $u$. On this route, we utilize \cite[Theorem 4.1,
Remark 4.4]{PSV} which provide the existence of $u$ in the
corresponding fractional H\"{o}lder classes. As a result, we
reconstruct the triple $(\nu_1,\nu_2,u)$ which solves the IP
\eqref{2.2}-\eqref{2.4}. The second stage in the arguments concerns
the uniqueness of a solution to \eqref{2.2}-\eqref{2.4}. To this
end, we appeal to the arguments by contradiction. Namely, assuming
two different solutions of IP (with the same given functions, the
coefficients and the measurement), we will examine that this IP
admits no more than one solution if assumptions of Theorem
\ref{t3.1} hold.

\subsection{Auxiliary results}\label{s3.1}

Here, we establish  technical results playing a key role in the
verification of formula \eqref{3.3*}.
\begin{proposition}\label{p3.1}
Let a continuous function $w=w(t):[0,T]\to \R$ have  continuous
Caputo fractional  derivatives in time of orders $\mu_{1}$ and
$\mu_{2},$ $0<\mu_{2}<\mu_{1}<1$. Then there is the representation
\[
\D_{t}^{\mu_{2}}w(t)=(\omega_{\mu_{1}-\mu_{2}}*\D_{t}^{\mu_{1}}w)(t)
\]
for all $t\in[0,T]$.
\end{proposition}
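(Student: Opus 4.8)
The plan is to reduce the asserted formula to an elementary cancellation property of the Volterra integral operator, using only the defining formula of the Caputo derivative together with the semigroup identity $\omega_{a}*\omega_{b}=\omega_{a+b}$ for $a,b>0$ (immediate from the Beta integral). Set $W(t):=w(t)-w(0)$; then $W\in\C([0,T])$ and $W(0)=0$. By the very definition of $\D_{t}^{\mu_{j}}$, for $j=1,2$ the function $\omega_{1-\mu_{j}}*W$ is differentiable on $[0,T]$ with $\frac{d}{dt}\big(\omega_{1-\mu_{j}}*W\big)=\D_{t}^{\mu_{j}}w$, and this derivative is continuous by hypothesis; moreover $|(\omega_{1-\mu_{j}}*W)(t)|\le t^{1-\mu_{j}}\Gamma(2-\mu_{j})^{-1}\max_{[0,t]}|W|\to0$ as $t\to0^{+}$, so $(\omega_{1-\mu_{j}}*W)(0)=0$. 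Hence $\omega_{1-\mu_{j}}*W\in\C^{1}([0,T])$ vanishes at the origin, and the fundamental theorem of calculus gives
\[
\omega_{1-\mu_{j}}*W=\omega_{1}*\D_{t}^{\mu_{j}}w,\qquad j=1,2,
\]
where $\omega_{1}\equiv1$, so that $\omega_{1}*f=\int_{0}^{\cdot}f(s)\,ds$.

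Next I convolve the $j=1$ identity with $\omega_{\mu_{1}-\mu_{2}}\in L_{1}(0,T)$ (the interchange of integrations needed for associativity is justified by Fubini, all kernels being summable). On the left the semigroup property yields $\omega_{\mu_{1}-\mu_{2}}*\omega_{1-\mu_{1}}*W=\omega_{1-\mu_{2}}*W$, while on the right commutativity gives $\omega_{\mu_{1}-\mu_{2}}*\omega_{1}*\D_{t}^{\mu_{1}}w=\omega_{1}*\big(\omega_{\mu_{1}-\mu_{2}}*\D_{t}^{\mu_{1}}w\big)$. Comparing the outcome with the $j=2$ identity $\omega_{1-\mu_{2}}*W=\omega_{1}*\D_{t}^{\mu_{2}}w$ we obtain
\[
\int_{0}^{t}\Big[\D_{s}^{\mu_{2}}w-\big(\omega_{\mu_{1}-\mu_{2}}*\D_{\cdot}^{\mu_{1}}w\big)(s)\Big]ds=0\qquad\text{for all }t\in[0,T].
\]
Here $\D_{t}^{\mu_{2}}w$ is continuous by hypothesis and $\omega_{\mu_{1}-\mu_{2}}*\D_{t}^{\mu_{1}}w$ is continuous as the convolution of a summable kernel with the bounded continuous function $\D_{t}^{\mu_{1}}w$; thus the bracketed function is continuous, and a continuous function whose primitive is identically zero is itself identically zero. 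This is exactly $\D_{t}^{\mu_{2}}w=(\omega_{\mu_{1}-\mu_{2}}*\D_{t}^{\mu_{1}}w)(t)$ on $[0,T]$.

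The single delicate point is the passage to the integrated identities $\omega_{1-\mu_{j}}*W=\omega_{1}*\D_{t}^{\mu_{j}}w$: it requires that $\omega_{1-\mu_{j}}*W$ be absolutely continuous — here in fact $\C^{1}$, since its derivative is continuous by assumption — and that it vanish at $t=0$, both verified above; the constraints $0<\mu_{2}<\mu_{1}<1$ enter precisely to keep $\mu_{1}-\mu_{2}$, $1-\mu_{1}$ and $1-\mu_{2}$ positive. The remaining ingredients are routine: the Beta-function evaluation $\omega_{a}*\omega_{b}=\omega_{a+b}$, the Fubini interchanges, and the final "primitive vanishes $\Rightarrow$ integrand vanishes" step. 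One may, if preferred, shorten the argument by invoking the standard formula $I_{t}^{\mu}\D_{t}^{\mu}w=w-w(0)$ for $\mu\in(0,1)$ (valid under the stated continuity of $\D_{t}^{\mu}w$, cf. \cite{SV}) with $\mu=\mu_{1}$ and $\mu=\mu_{2}$, writing $I_{t}^{\mu_{1}}=I_{t}^{\mu_{2}}I_{t}^{\mu_{1}-\mu_{2}}$ and cancelling the injective operator $I_{t}^{\mu_{2}}$ on $\C([0,T])$ — which is the same reasoning in condensed form.
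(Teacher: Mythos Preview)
Your proof is correct. Both your argument and the paper's hinge on the same semigroup identity $\omega_{1-\mu_2}=\omega_{\mu_1-\mu_2}*\omega_{1-\mu_1}$, but the technical execution differs. The paper keeps the outer time-derivative in place, recognizes $\partial_t\big(\omega_{1-\mu_1}*\omega_{\mu_1-\mu_2}*[w-w(0)]\big)$ as the Riemann--Liouville derivative $\partial_t^{\mu_1}\big(\omega_{\mu_1-\mu_2}*[w-w(0)]\big)$, and then invokes \cite[Lemma~2.10]{KST} to convert this back to a Caputo derivative plus a boundary term $\omega_{\mu_1-\mu_2}(t)\lim_{z\to0}(\omega_{1-\mu_1}*[w-w(0)])(z)$, which vanishes by continuity of $w$. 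You instead pass first to the integrated identities $\omega_{1-\mu_j}*W=\omega_1*\D_t^{\mu_j}w$, apply the semigroup at that level, and then cancel the outer $\omega_1*$ using injectivity of integration on $\C([0,T])$. Your route is a step longer but fully self-contained---no external lemma is needed---and it makes the role of the vanishing initial value $(\omega_{1-\mu_j}*W)(0)=0$ explicit; the paper's route is more compact but hides that verification inside the cited result from \cite{KST}.
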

\begin{proof}
Setting $w_{0}=w(0)$ and appealing to the definition of the
fractional Caputo derivative and \cite[Proposition 4.2]{KPV3}, we
conclude that
\[
\D_{t}^{\mu_{2}}w= \frac{\partial}{\partial
t}(\omega_{1-\mu_2}*[w-w_{0}])
 =\frac{\partial}{\partial
t}(\omega_{1-\mu_1}*\omega_{\mu_{1}-\mu_{2}}*[w-w_{0}])
\]
for all $t\in[0,T].$ After that, collecting the definition of the
Riemann-Liouwille fractional derivative $\partial_{t}^{\theta}$ with
\cite[Lemma 2.10]{KST} arrives at the relations
\begin{align*}
\D_{t}^{\mu_{2}}w(t)&=\partial_{t}^{\mu_{1}}(\omega_{\mu_{1}-\mu_{2}}*[w-w_{0}])(t)
=
\Big(\omega_{\mu_1-\mu_2}*\D_{t}^{\mu_{1}}w\Big)(t)
\\
& +\omega_{\mu_1-\mu_2}(t)\underset{z\to
0}{\lim}(\omega_{1-\mu_{1}}*(w-w_{0}))(z).
\end{align*}
Thanks to the continuity of $w(t)$, the second term in the last
equality vanishes. That completes the verification of this claim.
\end{proof}

Here, for reader's convenience, we recall results, which subsume
Lemma 3.2 and Remark 3.1 in \cite{HPV} and  concern with finding the
order of a weaker singularity in a convolution.
\begin{lemma}\label{l.convolution}
Let arbitrary $T>0$, and $f=f(t):[0,T]\to\R$ and $k=k(t):[0,T]\to\R$
be bounded and continuous functions satisfying the relations
\begin{equation*}
f(0)\neq 0 \quad\text{and}\quad  k(0)\neq 0.
\end{equation*}
Then for  $\lambda\in(0,1)$ and
\[
\mathcal{G}(t)=\int_{0}^{t}(t-\tau)^{\gamma^{*}-1}k(t-\tau)f(\tau)d\tau\quad\text{with}\quad
\gamma^{*}\in(0,1),
\]
the following equalities hold:
\begin{equation*}
\underset{t\to 0}{\lim}\frac{\mathcal{G}(\lambda
t)}{\mathcal{G}(t)}=\lambda^{\gamma^{*}}\quad\text{and}\quad
\gamma^{*}=\log_{\lambda}\bigg|\underset{t\to
0}{\lim}\frac{\mathcal{G}(\lambda t)}{\mathcal{G}(t)}\bigg|.
\end{equation*}
\end{lemma}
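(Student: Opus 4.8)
The plan is to extract the leading singular behaviour of $\mathcal{G}(t)$ as $t\to 0$ and to show that it is governed entirely by the values $k(0)$ and $f(0)$ together with the weight $(t-\tau)^{\gamma^{*}-1}$. Concretely, I expect
\[
\mathcal{G}(t)=k(0)f(0)\,\frac{t^{\gamma^{*}}}{\gamma^{*}}+o(t^{\gamma^{*}})\qquad\text{as }t\to 0,
\]
so that after dividing, the prefactor $k(0)f(0)/\gamma^{*}$ cancels in the ratio $\mathcal{G}(\lambda t)/\mathcal{G}(t)$ and leaves only the scaling factor $\lambda^{\gamma^{*}}$. The elementary identity $\int_{0}^{t}(t-\tau)^{\gamma^{*}-1}d\tau=t^{\gamma^{*}}/\gamma^{*}$, valid since $\gamma^{*}\in(0,1)$, is what produces this power of $t$.

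To make the asymptotics rigorous, I would write
\[
\mathcal{G}(t)-k(0)f(0)\,\frac{t^{\gamma^{*}}}{\gamma^{*}}=\int_{0}^{t}(t-\tau)^{\gamma^{*}-1}\big[k(t-\tau)f(\tau)-k(0)f(0)\big]\,d\tau,
\]
and estimate the right-hand side. Since $k$ and $f$ are continuous and both arguments $t-\tau$ and $\tau$ lie in $[0,t]$ when $\tau\in[0,t]$, the bracket tends to $0$ uniformly in $\tau$ as $t\to 0$: given $\varepsilon>0$ there is $\delta>0$ so that $|k(t-\tau)f(\tau)-k(0)f(0)|<\varepsilon$ for all $t<\delta$ and $\tau\in[0,t]$. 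Bounding the bracket by $\varepsilon$ and integrating the nonnegative weight gives
\[
\Big|\mathcal{G}(t)-k(0)f(0)\,\frac{t^{\gamma^{*}}}{\gamma^{*}}\Big|\le \varepsilon\,\frac{t^{\gamma^{*}}}{\gamma^{*}},
\]
whence $\gamma^{*}t^{-\gamma^{*}}\mathcal{G}(t)\to k(0)f(0)$ as $t\to 0$. Because $k(0)f(0)\neq 0$ by hypothesis, this limit is nonzero, and in particular $\mathcal{G}(t)\neq 0$ for all sufficiently small $t$, so the ratio in the statement is well defined near the origin.

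With the normalised limit in hand, I would factor
\[
\frac{\mathcal{G}(\lambda t)}{\mathcal{G}(t)}=\lambda^{\gamma^{*}}\cdot\frac{\gamma^{*}(\lambda t)^{-\gamma^{*}}\mathcal{G}(\lambda t)}{\gamma^{*}t^{-\gamma^{*}}\mathcal{G}(t)}.
\]
As $t\to 0$ both $t$ and $\lambda t$ tend to $0$, so the numerator and denominator of the second factor converge to the same nonzero value $k(0)f(0)$, and the fraction tends to $1$; hence $\mathcal{G}(\lambda t)/\mathcal{G}(t)\to\lambda^{\gamma^{*}}$. The second claimed equality is then purely algebraic: since $\lambda\in(0,1)$ and $\gamma^{*}>0$ we have $\lambda^{\gamma^{*}}>0$, so taking absolute values is harmless and $\log_{\lambda}|\lambda^{\gamma^{*}}|=\gamma^{*}$. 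The only genuinely delicate point is the uniform smallness of $k(t-\tau)f(\tau)-k(0)f(0)$ over the shrinking interval $\tau\in[0,t]$; this is where continuity, equivalently uniform continuity on the compact $[0,T]$, of both $k$ and $f$ is essential, and where the nonvanishing assumptions $k(0)\neq 0$ and $f(0)\neq 0$ are needed to secure a nonzero denominator. Everything else reduces to the exact evaluation of the model integral and elementary manipulation of limits.
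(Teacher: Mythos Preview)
Your proof is correct. The asymptotic $\gamma^{*}t^{-\gamma^{*}}\mathcal{G}(t)\to k(0)f(0)\neq 0$ follows exactly as you describe from uniform continuity of $k$ and $f$ on $[0,T]$, and the ratio argument is clean.

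Regarding comparison with the paper: the paper does not actually prove this lemma. It is stated as a recalled result, citing Lemma~3.2 and Remark~3.1 of \cite{HPV}, and no argument is given in the present text. Your proof therefore supplies a self-contained, elementary verification that the paper omits. The approach you take---extracting the leading term $k(0)f(0)t^{\gamma^{*}}/\gamma^{*}$ via the exact evaluation of $\int_{0}^{t}(t-\tau)^{\gamma^{*}-1}d\tau$ and bounding the remainder by continuity---is the natural one, and it is entirely adequate here since only continuity of $k$ and $f$ at the origin (together with boundedness) is needed, not any H\"{o}lder regularity.
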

Next, we focus on the extension of this result to a weaker singular
kernel $\omega_{\theta}(t)$ and  given functions $a=a(t),$ $v=v(t)$
and $f_{0}=f_{0}(t)$ defined in $[0,T]$, which are related via the
equality
\begin{equation}\label{3.2}
(\omega_{\theta}*v)(t)=\mathcal{F}_{0}(t)\qquad\text{for}\quad
t\in[0,T],\quad \text{where}\quad
\mathcal{F}_0(t)=a(t)v(t)+f_{0}(t).
\end{equation}
\begin{lemma}\label{l4.1}
Let $\theta\in(0,1)$ and the functions $v$ and $\mathcal{F}_{0}$ be
continuous in $[0,T]$ and
\begin{equation}\label{3.3}
\frac{v(0)}{n^{*}}+\mathcal{F}_{0}(0)\neq 0
\end{equation}
for some $n^{*}\in\mathbb{N}.$ If equality \eqref{3.3} holds for
each $t\in[0,T]$, then
\begin{equation}\label{b.1}
\theta=\log_{\lambda}\bigg|\underset{t\to
0}{\lim}\frac{\mathcal{F}_{0}(\lambda t)}{\mathcal{F}_{0}(t)}\bigg|
\end{equation}
for each $\lambda\in(0,1)$.
\end{lemma}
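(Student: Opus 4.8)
\noindent The plan is to iterate the defining relation \eqref{3.2} finitely many times so as to convert it into a convolution of the type handled by Lemma \ref{l.convolution}, and then read off the order $\theta$ from the asymptotic ratio. First I would rewrite \eqref{3.2} as $v(t) = (\omega_\theta * v)(t) - a(t)v(t) - f_0(t) + $ (something); more precisely, the natural move is to substitute the expression $\mathcal{F}_0 = av + f_0$ back into itself. Writing $(\omega_\theta * v) = av + f_0$ and then applying $\omega_\theta *$ to both sides again, one gets $\omega_\theta * \omega_\theta * v = \omega_\theta * (av) + \omega_\theta * f_0$, and since $\omega_\theta * \omega_\theta = \omega_{2\theta}$, after $n^*$ steps one arrives at a relation of the form
\[
(\omega_{n^*\theta} * v)(t) = \sum_{j=0}^{n^*-1} (\omega_{j\theta} * (a v))(t) + \sum_{j=0}^{n^*-1}(\omega_{j\theta} * f_0)(t),
\]
interpreting $\omega_0 *$ as the identity. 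Choosing $n^*$ as in hypothesis \eqref{3.3} (the integer for which $v(0)/n^* + \mathcal{F}_0(0)\ne 0$) is the point of the statement: we want $n^*\theta$ to exceed $1$ if necessary, or more to the point, we want to arrange that the leading singular term is $\omega_{n^*\theta}$ convolved against a function that is continuous and nonzero at $t=0$.

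\noindent The second step is to isolate a single clean convolution $\int_0^t (t-\tau)^{\gamma^*-1} k(t-\tau) f(\tau)\,d\tau$ from the iterated identity, with $\gamma^* = n^*\theta - \lfloor n^*\theta\rfloor$ playing the role of the fractional part and $k(0)\ne 0$, $f(0)\ne 0$. The key computation here is evaluating the constant in front: convolving $\omega_\theta$ with itself $n^*$ times and extracting the value at $0^+$ of the relevant continuous factor produces exactly the combination $v(0)/n^* + \mathcal{F}_0(0)$ (up to a Gamma-function constant), which is nonzero by \eqref{3.3}. Actually the cleanest route may be: from the relation $(\omega_\theta * v) = \mathcal{F}_0$, deduce that $v$ itself has the form $v(t) = c\, t^{\theta - 1}\cdot(\text{continuous, nonzero at }0) + (\text{lower order})$ by a bootstrap/Abel-inversion argument, hence $\mathcal{F}_0 = \omega_\theta * v$ is, to leading order, $\omega_\theta * (c\,\omega_\theta \cdot g)$ with $g$ continuous and $g(0)\ne 0$; then $\mathcal{F}_0(t)$ behaves like $t^{2\theta-1}$ times a continuous nonzero function, or more generally like $t^{\gamma^*-1}$ times such a function after enough iterations, and Lemma \ref{l.convolution} applies directly with exponent $\gamma^*$. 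One then recovers $\theta$ from $\gamma^*$ and $n^*$, and since the final answer \eqref{b.1} only involves $|\lim \mathcal{F}_0(\lambda t)/\mathcal{F}_0(t)|$, the integer $n^*$ and the integer part cancel: $\lambda^{\gamma^*}$ and $\lambda^{n^*\theta}$ give the same logarithm base $\lambda$ modulo the fact that $\log_\lambda|\lambda^{n^*\theta}| = n^*\theta$ while we want $\theta$ — so one must be slightly careful to divide by $n^*$ at the end, or to set up the iteration so that exactly one power of $\omega_\theta$ survives.

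\noindent The main obstacle I anticipate is the bookkeeping in the iteration: making sure that at each stage the "coefficient" function multiplying the convolution kernel stays continuous and, crucially, does not vanish at $t=0$, so that Lemma \ref{l.convolution} is applicable to the surviving term and the other terms are genuinely lower order near $t=0$. This is where hypothesis \eqref{3.3} with the specific integer $n^*$ enters essentially — it is precisely the nondegeneracy condition guaranteeing that the leading coefficient, which turns out to be a specific linear combination of $v(0)$ and $\mathcal{F}_0(0)$ with weight $1/n^*$, does not accidentally cancel. A secondary technical point is handling the case $n^*\theta \ge 1$, where $\omega_{n^*\theta}$ is no longer integrable in the naive sense; this is resolved by writing $n^*\theta = m + \gamma^*$ with $m\in\mathbb{N}_0$ and $\gamma^*\in(0,1)$ and peeling off $m$ Riemann–Liouville integrations (which only improve regularity and do not affect the leading singular exponent), leaving a clean $\omega_{\gamma^*}$-convolution to which Lemma \ref{l.convolution} applies.
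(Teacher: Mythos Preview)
Your iteration idea does not reach the target, and the gap is structural rather than bookkeeping. Two concrete problems. First, the factor $1/n^{*}$ in hypothesis \eqref{3.3} has nothing to do with iterating $\omega_{\theta}*$ a total of $n^{*}$ times: repeated self-convolution produces $\omega_{n^{*}\theta}$, i.e.\ $n^{*}$ enters the \emph{exponent}, never as a linear weight $1/n^{*}$ on $v(0)$. Your claim that ``extracting the value at $0^{+}$ \dots\ produces exactly the combination $v(0)/n^{*}+\mathcal{F}_0(0)$'' is simply false for this scheme (check $n^{*}=2$: one gets $\omega_{2\theta}*v=\omega_\theta*\mathcal{F}_0$, and your displayed sum is off by a stray $\mathcal{F}_0$). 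Second, the Abel-inversion sketch contradicts the hypotheses: $v$ is assumed continuous on $[0,T]$, so it cannot carry a $t^{\theta-1}$ singularity. Moreover, your endgame yields the asymptotic ratio of an \emph{iterated} object, not of $\mathcal{F}_0$ itself, so even if the coefficients lined up you would recover $n^{*}\theta$ (or its fractional part) and then have to undo the iteration---but formula \eqref{b.1} is about $\mathcal{F}_0(\lambda t)/\mathcal{F}_0(t)$ directly.

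The paper's mechanism is entirely different and is where the $1/n^{*}$ genuinely comes from. One convolves \eqref{3.2} with the Mittag--Leffler resolvent $s_{n,\theta}(t)=E_{\theta}(-nt^{\theta})$, which satisfies $s_{n,\theta}+n(\omega_\theta*s_{n,\theta})=1$; this turns \eqref{3.2} into $(s_{n,\theta}*\mathcal{U})(t)=(\tfrac{1}{n}*v)(t)$ with $\mathcal{U}(n,t)=\tfrac{v(t)}{n}+\mathcal{F}_0(t)$. Differentiating in $t$ and using $\tfrac{d}{dt}s_{n,\theta}(t)=-n\,t^{\theta-1}E_{\theta,\theta}(-nt^{\theta})$ yields
\[
\mathcal{F}_0(t)=\int_0^{t}(t-\tau)^{\theta-1}\,nE_{\theta,\theta}\bigl(-n(t-\tau)^{\theta}\bigr)\,\mathcal{U}(n,\tau)\,d\tau,
\]
which is exactly of the form in Lemma~\ref{l.convolution} with $k(t)=nE_{\theta,\theta}(-nt^{\theta})$, $f=\mathcal{U}(n,\cdot)$, $\gamma^{*}=\theta$. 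The nondegeneracy condition $f(0)\neq 0$ is precisely $\mathcal{U}(n^{*},0)=v(0)/n^{*}+\mathcal{F}_0(0)\neq 0$, which is \eqref{3.3}. Here $n$ is a \emph{free parameter of the resolvent}, not an iteration count; that is why it shows up linearly as $1/n$.
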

\begin{proof}
This claim is a consequence of Lemma \ref{l.convolution} and the
properties of the Mittag-Leffler function
\[
E_{\theta}(z)=\sum_{k=0}^{\infty}\frac{z^{k}}{\Gamma(\theta k+1)}.
\]
Namely, \cite[Proposition 4.1]{KPSV5} establishes that the function
$s_{n,\theta}:=s_{n,\theta}(t)=E_{\theta}(-nt^{\theta}),$
$n\in\mathbb{N},$ $\theta\in(0,1),$ solves the scalar-valued Voltera
equation
\begin{equation}\label{3.5}
s_{n,\theta}(t)+n(\omega_{\theta}*s_{n,\theta})(t)=1,\quad t\geq 0.
\end{equation}
Taking convolution \eqref{3.2} with $s_{n,\theta}$, we arrive at the
equality
\[
(s_{n,\theta}*\omega_{\theta}*v)(t)=(s_{n,\theta}*\mathcal{F}_0)(t),
\]
and then exploiting \eqref{3.5} and the commutative and associative
properties of convolution, we end up with
\[
\bigg(\frac{1-s_{n,\theta}}{n}*v\bigg)(t)=(s_{n,\theta}*\mathcal{F}_0)(t)
\]
for each $t\in[0,T]$ and any fixed  $n\in\mathbb{N}.$

\noindent Finally, setting
\[
\mathcal{U}=\mathcal{U}(n,t)= \frac{v(t)}{n}+\mathcal{F}_{0}(t),
\]
we rewrite the last equality in the form
\begin{equation}\label{3.6}
(s_{n,\theta}*\mathcal{U})(t)=(\frac{1}{n}*v)(t)
\end{equation}
 for each $t\geq 0$  and any fixed $n\in\mathbb{N}.$

\noindent Thanks to condition \eqref{3.3} and  the regularity of $v$
and $\mathcal{F}_{0}$, we easily deduce that
\begin{equation}\label{3.7}
\mathcal{U}(n^{*},0)\neq 0\quad\text{and}\quad
\mathcal{U}(n,t)\in\C([0,T])
\end{equation}
for each fixed $n\in\mathbb{N}.$ Collecting this fact with the
straightforward calculations leads to the following equalities:
\begin{align*}
\frac{d}{dt}(n^{-1}*v)(t)&=n^{-1}v(t),\\
\frac{d}{dt}(s_{n,\theta}*\mathcal{U})(t)&=s_{n,\theta}(0)\mathcal{U}(n,t)
+
\int_{0}^{t}\mathcal{U}(n,\tau)\frac{d}{d(t-\tau)}s_{n,\theta}(t-\tau)d\tau,
\end{align*}
where due to \cite[Proposition 4.1]{KPSV5}
\[
s_{n,\theta}(0)=1\quad\text{and}\quad \frac{d}{dt}s_{n,\theta}(t)=-n
t^{\theta-1}E_{\theta,\theta}(-nt^{\theta}).
\]
Here $E_{\theta_{1},\theta_{2}}(z)$ is the two-parametric
Mittag-Leffler function defined as
\begin{equation}\label{m.1}
E_{\theta_{1},\theta_{2}}(z)=\sum_{k=0}^{+\infty}\frac{z^{k}}{\Gamma(\theta_{1}k+\theta_{2})},\quad
\theta_{1},\theta_{2}>0.
\end{equation}
At this point, differentiating  \eqref{3.6} with respect to  $t$ and
utilizing the relations above, we deduce the equality
\begin{equation}\label{3.8}
G(t,n)=\mathcal{F}_{0}(t)
\end{equation}
for any fixed $n\in\mathbb{N}$ and each  $t\in[0,T]$, where we set
\[
G(t,n)=\int_{0}^{t}(t-\tau)^{\theta-1}n
E_{\theta,\theta}(-n(t-\tau)^{\theta})\mathcal{U}(n,\tau)d\tau.
\]
In fine,  properties of $\mathcal{U}(n,\tau)$ (see \eqref{3.7}) and
$E_{\theta_{1},\theta_{2}}(z)$ allow us to  apply Lemma
\ref{l.convolution} to $\mathcal{G}(t)=G(t,n^{*})$. Namely,
selecting
\[
\mathcal{G}(t)=G(t,n^{*}),\quad
k(t)=n^{*}E_{\theta,\theta}(-n^{*}t^{\theta}),\quad
f(t)=\mathcal{U}(n^{*},t),\quad \gamma^{*}=\theta,
\]
we obtain the equality
\[
\lambda^{\theta}=\underset{t\to 0}{\lim}\frac{G(\lambda
t,n^{*})}{G(t,n^{*})}
\]
 with arbitrary $\lambda\in(0,1)$.
In fine, appealing to equality  \eqref{3.8} with $n=n^{*}$,   we
complete the verification
 of Lemma \ref{l4.1}.
 \end{proof}
\begin{remark}\label{r4.1}
It is apparent that:

\noindent(i) If $v(0)=0$ and $f_{0}(0)\neq 0,$ then condition
\eqref{3.3} holds for any $n\in\mathbb{N}.$

\noindent(ii) If $v(0)\neq 0$ and $f_{0}(0)= 0,$ then condition
\eqref{3.3} holds for any $n\in\mathbb{N}$ satisfying the inequality
\[
n^{-1}\neq -a(0).
\]
Clearly, arbitrary positive $a(0)$  provides the fulfillment of the
last inequality (and consequently, \eqref{3.3}) with any
$n\in\mathbb{N}$.

\noindent(iii) If $v(0)$ does not vanish and $f_{0}(0)\neq 0$, then
condition \eqref{3.3} holds for any $n\in\mathbb{N}$ satisfying the
inequality
\[
n^{-1}\neq -\frac{\mathcal{F}_{0}(0)}{v(0)}.
\]
Obviously, if the value $\frac{\mathcal{F}_{0}(0)}{v(0)}$ is
nonnegative, then \eqref{3.3} is fulfilled for any $n\in\mathbb{N}.$
Otherwise, that is in the case of  the negative
$\frac{\mathcal{F}_{0}(0)}{v(0)}$, \eqref{3.3} holds for any integer
positive $n$ solving the inequality
\[
n>\bigg|\frac{v(0)}{\mathcal{F}_{0}(0)}\bigg|.
\]
For example,
$n=1+\Big[\Big|\frac{v(0)}{\mathcal{F}_{0}(0)}\Big|\Big]$, where the
symbol  $[\cdot]$ stands for the integer part of a number.
\end{remark}

We complete this subsection with the asymptotic representation
established in \cite[Lemma 4.1]{PSV1} and reported here below in a
particular form tailored for our goals. To this end, for given
functions $v=v(t)$ and $r_{i}=r_{i}(t),$ $i=1,2,$ and the parameters
$\theta\in(0,1)$ and $\mu_{i}:$ $0<\mu_{2}<\mu_{1}<1,$ we set
\begin{align*}
\D_{t}^{I}v&=r_{1}(t)\D_{t}^{\mu_{1}}v(t)-r_{2}(t)\D_{t}^{\mu_{2}}v(t)\quad\text{and}\quad
\D_{t}^{II}v=\D_{t}^{\mu_{1}}(r_{1}(t)v(t))-\D_{t}^{\mu_{2}}(r_{2}(t)v(t)),\\
J_{\theta}(v,t)&=\int_{0}^{t}(t-\tau)^{\theta-1}[\D_{\tau}^{\theta}v(\tau)-\D_{\tau}^{\theta}v(0)]d\tau.
\end{align*}
\begin{lemma}\label{l4.2}
Let positive $T$ be any but fixed, $0<\mu_{2}<\mu_{1}<1,$ and
$r_1,r_2\in\C^{1+\alpha^{*}}([0,T])$ with $\alpha^{*}\in(0,1),$ and,
besides,  $r_1$ be a positive function. We assume that a continuous
function $v=v(t):[0,T]\to\R$ has continuous derivatives
$\D_{t}^{\mu_{1}}v,$ $\D_{t}^{\mu_{2}}v$ in $[0,T]$. Then for each
$t\in[0,T]$ the following representations hold:
\begin{align*}
&[v(t)-v(0)][r_{1}(0)\Gamma(1+\mu_{1})-r_{2}(0)t^{\mu_{1}-\mu_{2}}\Gamma(1+\mu_{2})]\\&=
t^{\mu_{1}}\D_{t}^{I}v(0)+\mu_{1}r_{1}(0)J_{\mu_{1}}(v,t)-\mu_{2}r_{2}(0)t^{\mu_{1}-\mu_{2}}J_{\mu_{2}}(v,t);\\
&[r_{1}(t)v(t)-r_{1}(0)v(0)]\Gamma(1+\mu_{1})-[r_{2}(t)v(t)-r_{2}(0)v(0)]t^{\mu_{1}-\mu_{2}}\Gamma(1+\mu_{2})\\&=
t^{\mu_{1}}\D_{t}^{II}v(0)+\mu_{1}J_{\mu_{1}}(r_1v,t)-\mu_{2}t^{\mu_{1}-\mu_{2}}J_{\mu_{2}}(r_2v,t).
\end{align*}
\end{lemma}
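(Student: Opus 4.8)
The plan is to derive both representations from one elementary scalar fractional Taylor formula with integral remainder, applied to a suitably chosen function, and then to combine its two instances algebraically. First I would establish the auxiliary identity: if $w\colon[0,T]\to\R$ is continuous and possesses a continuous Caputo derivative $\D_t^{\mu}w$ for some $\mu\in(0,1)$, then
\[
\Gamma(1+\mu)\,[w(t)-w(0)]=t^{\mu}\,\D_t^{\mu}w(0)+\mu\,J_{\mu}(w,t),\qquad t\in[0,T].
\]
To obtain it I would argue as in the proof of Proposition \ref{p3.1}: since the regularized Caputo derivative satisfies $\D_t^{\mu}w=\partial_t^{\mu}(w-w(0))$ with $\partial_t^{\mu}$ the Riemann--Liouville derivative, the composition rule \cite[Lemma 2.10]{KST} yields $I_t^{\mu}\D_t^{\mu}w=w(t)-w(0)-\tfrac{t^{\mu-1}}{\Gamma(\mu)}\,(I_t^{1-\mu}(w-w(0)))(0^{+})$, and the last term vanishes by the continuity of $w-w(0)$ together with $w(0)-w(0)=0$. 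Hence $w(t)-w(0)=\tfrac{1}{\Gamma(\mu)}\int_0^t(t-\tau)^{\mu-1}\D_\tau^{\mu}w(\tau)\,d\tau$; writing $\D_\tau^{\mu}w(\tau)=\D_\tau^{\mu}w(0)+[\D_\tau^{\mu}w(\tau)-\D_\tau^{\mu}w(0)]$ and using $\int_0^t(t-\tau)^{\mu-1}d\tau=t^{\mu}/\mu$ gives the auxiliary identity, the continuity of $\D_t^{\mu}w$ making $\D_t^{\mu}w(0)$ meaningful and $J_{\mu}(w,t)$ finite.

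For the type I representation I would apply the auxiliary identity to $w=v$ twice, once with $\mu=\mu_1$ and once with $\mu=\mu_2$, then multiply the two equalities by $r_1(0)$ and by $r_2(0)\,t^{\mu_1-\mu_2}$ respectively and subtract. Since $t^{\mu_1-\mu_2}\cdot t^{\mu_2}=t^{\mu_1}$ on the remainder carrying $\D_t^{\mu_2}v(0)$, and $r_1(0)\D_t^{\mu_1}v(0)-r_2(0)\D_t^{\mu_2}v(0)=\D_t^{I}v(0)$, the first formula of the lemma follows at once.

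For the type II representation the single extra point is that the auxiliary identity must be invoked with $w=r_1v$ (and $\mu=\mu_1$) and with $w=r_2v$ (and $\mu=\mu_2$), which requires $\D_t^{\mu_i}(r_iv)\in\C([0,T])$. This I would get from the fractional Leibniz formula recalled in the Introduction (cf. \cite[Proposition 5.5]{SV}),
\[
\D_t^{\mu_i}(r_iv)(t)=r_i(t)\D_t^{\mu_i}v(t)+v(0)\D_t^{\mu_i}r_i(t)+\frac{\mu_i}{\Gamma(1-\mu_i)}\int_0^t\frac{r_i(t)-r_i(s)}{(t-s)^{1+\mu_i}}\,[v(s)-v(0)]\,ds,
\]
because $r_i\in\C^{1+\alpha^{*}}([0,T])$ is Lipschitz, so the singular integrand is bounded by $C(t-s)^{-\mu_i}$ and the integral depends continuously on $t$, while $\D_t^{\mu_i}r_i$ is continuous as $r_i\in\C^{1}([0,T])$. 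With this at hand, multiplying the $w=r_1v$ identity by $1$, the $w=r_2v$ identity by $t^{\mu_1-\mu_2}$, subtracting, and using $\D_t^{\mu_1}(r_1v)(0)-\D_t^{\mu_2}(r_2v)(0)=\D_t^{II}v(0)$ produce the second formula.

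I do not expect a genuine obstacle here: the whole content sits in the one-line auxiliary identity, the remaining steps being purely algebraic rearrangements, and the only place calling for (mild) care is the continuity of $\D_t^{\mu_i}(r_iv)$ in the type II case, which the $\C^{1+\alpha^{*}}$ regularity of $r_i$ settles. Alternatively, one may simply quote \cite[Lemma 4.1]{PSV1}, of which the present statement is the announced particular case for two terms.
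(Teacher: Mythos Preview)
Your proof is correct and self-contained. The paper itself does not prove this lemma at all: it merely records it as a special case of \cite[Lemma~4.1]{PSV1} (as you note in your closing remark). Your derivation via the scalar fractional Taylor identity $\Gamma(1+\mu)[w(t)-w(0)]=t^{\mu}\D_{t}^{\mu}w(0)+\mu J_{\mu}(w,t)$, followed by the two linear combinations, is exactly the natural route and is presumably what underlies the cited result; the check of continuity of $\D_{t}^{\mu_i}(r_iv)$ via the fractional Leibniz formula and the Lipschitz bound from $r_i\in\C^{1+\alpha^{*}}$ is the right way to justify applying the auxiliary identity in the type~II case. In short, you supply the argument the paper outsources.
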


\subsection{Solvability of \eqref{2.2}-\eqref{2.4}}\label{s3.2}

First, we recall the results concerning the solvability and
regularity of the direct problem \eqref{2.2}-\eqref{2.3}, which
subsume Theorem 4.1 and Remark 4.4 in \cite{PSV} and are rewritten
here in a particular form tailored for our purposes.
\begin{lemma}\label{l4.3}
Under assumptions h1--h4, h6, the initial-boundary value problem
\eqref{2.2}-\eqref{2.3} admits a unique global classical solution
$u\in\C^{2+\alpha,\frac{2+\alpha}{2}\nu_{1}}(\bar{\Omega}_{T}),$
\[
\|u\|_{\C^{2+\alpha,\frac{2+\alpha}{2}\nu_{1}}(\bar{\Omega}_{T})}+\|\D_{t}^{\nu_2}u\|_{\C^{\alpha,\frac{\nu_{1}\alpha}{2}}(\bar{\Omega}_{T})}
\leq
C^{*}[\|g\|_{\C^{\alpha,\frac{\nu_{1}\alpha}{2}}(\bar{\Omega}_{T})}+\|u_{0}\|_{\C^{2+\alpha}(\bar{\Omega})}+
\|\varphi\|_{\C^{1+\alpha,\frac{1+\alpha}{2}\nu_{1}}(\partial\Omega_{T})}
],
\]
where the positive quantity $C^*$ depends only on the Lebesgue
measure of $\Omega$, $T$ and the corresponding norms of the
coefficients. Besides, for any $T_{0}\in(0,T]$, there hold
\begin{equation*}\label{3.9}
\int_{\Omega}u(x,t)dx\in\C^{\nu_1}([0,T_{0}]),\quad
\D_{t}^{\nu_{i}}\int_{\Omega}u(x,t)dx\in\C^{\alpha\nu_1/2}([0,T_{0}]),\,
i=1,2,\quad \D_{t}\int_{\Omega}u(x,t)dx\bigg|_{t=0}=\c_0,
\end{equation*}
in particular, if $T_0=t^{*}$, then  $\psi\in\C^{\nu_1}([0,t^{*}]),$
$\mathbf{D}_{t}^{\nu_i}\psi\in\C^{\alpha\nu_1/2}([0,t^{*}])$ and
$\mathbf{D}_t\psi|_{t=0}=\c_{0}$.
\end{lemma}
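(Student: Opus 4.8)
The first assertion — existence and uniqueness of the classical solution $u\in\C^{2+\alpha,\frac{2+\alpha}{2}\nu_{1}}(\bar{\Omega}_{T})$ together with the displayed a priori estimate — is exactly \cite[Theorem 4.1, Remark 4.4]{PSV} specialized to the two-term operator $\D_{t}$ of \eqref{2.2}, so here I would simply invoke that result. What I would stress is which of its outputs are needed downstream: the estimate supplies the two facts $\D_{t}^{\nu_{1}}u,\D_{t}^{\nu_{2}}u\in\C^{\alpha,\frac{\nu_{1}\alpha}{2}}(\bar{\Omega}_{T})$, and that assumption h5 is deliberately absent from the hypotheses because the regularity of $\psi$ is to be \emph{deduced}, not assumed. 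The genuinely new content of the lemma is therefore the analysis of the spatial mean $w(t):=\int_{\Omega}u(x,t)\,dx$.

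For the regularity of $w$, I would first justify that the nonlocal time operators commute with the spatial integral, namely $\D_{t}^{\nu_{i}}w(t)=\int_{\Omega}\D_{t}^{\nu_{i}}u(x,t)\,dx$ for $i=1,2$. Writing the regularized Caputo derivative as $\partial_{t}(\omega_{1-\nu_{i}}*[u-u_{0}])$, one interchanges $\int_{\Omega}$ with the absolutely convergent Riemann--Liouville integral by Fubini and then passes $\partial_{t}$ under $\int_{\Omega}$ using the continuity of $\D_{t}^{\nu_{i}}u$ on $\bar{\Omega}_{T}$. Integrating the two bounds $\D_{t}^{\nu_{i}}u\in\C^{\alpha,\frac{\nu_{1}\alpha}{2}}(\bar{\Omega}_{T})$ over the bounded set $\Omega$ then yields $\D_{t}^{\nu_{i}}w\in\C^{\alpha\nu_{1}/2}([0,T_{0}])$ for $i=1,2$ at once (alternatively, Proposition \ref{p3.1} gives $\D_{t}^{\nu_{2}}w=\omega_{\nu_{1}-\nu_{2}}*\D_{t}^{\nu_{1}}w$, with a gain of $\nu_{1}-\nu_{2}$ in the exponent). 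Finally, since $\D_{t}^{\nu_{1}}w$ is bounded and $w(0)=\int_{\Omega}u_{0}\,dx$, the identity $w(t)-w(0)=(\omega_{\nu_{1}}*\D_{t}^{\nu_{1}}w)(t)$ exhibits $w-w(0)$ as a Riemann--Liouville integral of order $\nu_{1}$ of a bounded function, whence $w\in\C^{\nu_{1}}([0,T_{0}])$.

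To evaluate $\D_{t}w|_{t=0}$ I would integrate equation \eqref{2.1} over $\Omega$. Commuting $\D_{t}$ and $\mathcal{K}*$ with $\int_{\Omega}$ as above gives $\int_{\Omega}\D_{t}u\,dx=\D_{t}w$, while the divergence theorem and the definition of $\mathcal{M}$ convert each elliptic term into a boundary flux, $\int_{\Omega}\mathcal{L}_{1}u\,dx=-\int_{\partial\Omega}\mathcal{M}u\,dS+a_{0}w$ and similarly for $\mathcal{L}_{2}$ with $b_{0}$. Setting $m(t):=\int_{\partial\Omega}\mathcal{M}u\,dS$, the integrated equation reads $\D_{t}w=\int_{\Omega}g\,dx+a_{0}w+\mathcal{K}*(b_{0}w)-m-\mathcal{K}*m$. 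Integrating the Neumann condition in \eqref{2.3} over $\partial\Omega$ produces $m+(1-d)\,\mathcal{K}*m=\mathcal{I}$, and a short case check ($d=1$ forces $m=\mathcal{I}$, while $d=0$ gives $m+\mathcal{K}*m=\mathcal{I}$ outright) shows $m+\mathcal{K}*m=\mathcal{I}+d\,(\mathcal{K}*\mathcal{I})$ in both cases. Substituting and recalling $w=\psi$ on $[0,t^{*}]$ reproduces precisely the function $\c$ of \eqref{3.1}, so $\D_{t}w=\c$ on $[0,t^{*}]$; since every convolution term vanishes at $t=0$, evaluation there gives $\D_{t}w|_{t=0}=\c(0)=\c_{0}$. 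Taking $T_{0}=t^{*}$ and using $w=\psi$ from \eqref{2.4} then yields the three ``in particular'' statements.

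Because the hard analytic estimate is imported from \cite{PSV}, the only delicate points internal to this proof are the rigorous commutation of the nonlocal operators $\D_{t}^{\nu_{i}}$ and $\mathcal{K}*$ with the spatial integral and the boundary trace near the singular time $t=0$, and the bookkeeping of the memory terms under the two admissible values of $d$. I expect the Fubini--interchange justification — guaranteeing that the Riemann--Liouville kernels remain integrable after the spatial integration and that $\partial_{t}$ may be carried out under $\int_{\Omega}$ — to be the main (though ultimately routine) obstacle, since the divergence-theorem reduction and the $d$-case analysis are purely algebraic once that commutation is in place.
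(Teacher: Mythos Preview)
Your proposal is correct and matches the paper's approach. The paper does not actually prove Lemma~\ref{l4.3}: it is introduced with the sentence ``we recall the results concerning the solvability and regularity of the direct problem \eqref{2.2}--\eqref{2.3}, which subsume Theorem 4.1 and Remark 4.4 in \cite{PSV} and are rewritten here in a particular form tailored for our purposes,'' and no further argument is given. The integrated identity $\D_{t}w=\c$ that you derive by the divergence theorem and the $d$-case analysis is precisely what the paper obtains later as \eqref{3.10*}, in the course of proving Theorem~\ref{t3.1}, with the remark ``after performing the standard technical calculations.'' So you have supplied the details the paper suppresses, and in the same spirit: cite \cite{PSV} for the hard a~priori estimate, then read off the regularity of $w$ from that of $\D_{t}^{\nu_i}u$ and integrate the equation over $\Omega$ to compute $\D_{t}w|_{t=0}$.
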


As we wrote above, formula \eqref{3.2*} is proved  in \cite[Theorem
3.1]{HPV}. Therefore, if we find $\nu_2$ via \eqref{3.3*}, then
Lemma \ref{l4.3} will provide the existence of $u$ solving
\eqref{2.2}-\eqref{2.3}. Thus, the verification of the solvability
to IP \eqref{2.2}-\eqref{2.4} will be completed.

To verify \eqref{3.3*}, we will exploit Proposition \ref{p3.1} and
Lemma \ref{l4.1}. Indeed, integrating equation \eqref{2.1} over
$\Omega$ and bearing in mind observation \eqref{2.4}, we arrive at
the equality (after performing the standard technical calculations)
\begin{equation}\label{3.10*}
\D_{t}\psi(t)-a_{0}(t)\psi(t)-(\mathcal{K}*b_{0}\psi)(t)=\int_{\Omega}g(x,t)dx-d(\mathcal{K}*\mathcal{I})(t)-\mathcal{I}(t),
\,t\in[0,t^{*}].
\end{equation}
In the case of the I type FDO,  equality \eqref{3.10*} can be
rewritten as
\begin{equation}\label{3.10}
\D_{t}^{\nu_2}\psi(t)=[\rho_{1}(t)\D_{t}^{\nu_1}\psi(t)-\c(t)]\rho_{2}^{-1}(t),
\end{equation}
while in the case of the II type FDO, we have
\begin{equation}\label{3.11}
\D_{t}^{\nu_{2}}(\rho_{2}(t)\psi(t))=\D_{t}^{\nu_{1}}(\rho_{1}(t)\psi(t))-\c(t).
\end{equation}
It is worth noting that in \eqref{3.10}, we exploit the nonvanishing
$\rho_{2}(t)$ if  $t\in[0,t^{*}].$

At this point, we aim to reduce these equalities to \eqref{3.2} with
$\theta=\nu_{1}-\nu_2.$ To this end, we examine the  case of the I
and the II type FDO, separately.

\noindent$\bullet$ Bearing in mind the smoothness of $\psi(t)$ and
collecting Lemma \ref{l4.3} with Proposition \ref{p3.1}, we rewrite
\eqref{3.10} in the form
\begin{equation}\label{3.12}
(\omega_{\nu_{1}-\nu_{2}}*\D_{t}^{\nu_{1}}\psi)(t)=\rho_{1}(t)\rho_{2}^{-1}(t)\D_{t}^{\nu_{1}}\psi(t)-\rho^{-1}_{2}(t)\c(t).
\end{equation}
It is apparent that this relation boils down with \eqref{3.2}, where
we set
\[
v(t)=\D_{t}^{\nu_{1}}\psi(t),\quad
a(t)=\rho_{1}(t)\rho_{2}^{-1}(t),\quad
f_{0}(t)=-\c(t)\rho_{2}^{-1}(t).
\]
Obviously, assumptions h3--h5 and nonvanishing $\rho_{2}(t)$ if
$t\in[0,t^{*}]$ provide the following regularity
\[
\frac{\c}{\rho_{2}},\,\D_{t}^{\nu_{1}}\psi,\,\frac{\rho_{1}}{\rho_{2}}\in\C([0,t^{*}]).
\]
Hence, in order to apply Lemma \ref{l4.1} to \eqref{3.12}, we are
left to verify condition \eqref{3.3}, which in the considered case
reads as
\begin{equation}\label{3.13}
\bigg(\frac{\rho_{1}(0)}{\rho_{2}(0)}+\frac{1}{n^{*}}\bigg)\D_{t}^{\nu_{1}}\psi(0)-\frac{\c_0}{\rho_{2}(0)}\neq
0
\end{equation}
for some $n^{*}\in\mathbb{N}$.

Collecting  \eqref{3.9} with the assumptions of Theorem \ref{t3.1}
arrives at the inequalities
\[
\D_{t}\psi|_{t=0}=\c_{0}\neq 0,
\]
which tell us that  two options occur:

\noindent(i) either $\D_{t}^{\nu_{1}}\psi(0)=0$ but $\c_{0}\neq 0;$

\noindent(ii) or $\D_{t}^{\nu_{1}}\psi(0)\neq 0$ and $\c_{0}\neq 0.$

Clearly, in the case of (i), inequality \eqref{3.13} is fulfilled
for any $n^{*}\in\mathbb{N}.$ Coming to the second option,
inequality \eqref{3.13} holds for any $n^{*}\in\mathbb{N}$
satisfying the relation
\[
\frac{1}{n^{*}}\neq
\frac{1}{\rho_{2}(0)}\bigg(\frac{\c_{0}}{\D_{t}^{\nu_{1}}\psi(0)}-\rho_{1}(0)\bigg).
\]
The existence of such $n^{*}$ is provided by (iii) in Remark
\ref{r4.1}.

As a result, all requirements of  Lemma \ref{l4.1} are satisfied
and, hence, applying this claim to \eqref{3.12}, we find $\nu_{2}$
in the form of \eqref{3.3*} if $\D_{t}$ is the I type FDO.

As for  the case of the II type FDO, exploiting assumption h3 along
with Lemma \ref{l4.3} and Proposition \ref{p3.1} and performing
technical computations, we rewrite \eqref{3.11} in the form
 \begin{equation}\label{3.12*}
(\omega_{\nu_{1}-\nu_{2}}*\D_{t}^{\nu_{1}}(\rho_{2}\psi))(t)
=\D_{t}^{\nu_{1}}(\rho_{2}\psi)(t)+[\D_{t}^{\nu_{1}}(\rho_{1}\psi)(t)-\D_{t}^{\nu_{1}}(\rho_{2}\psi)(t)-\c(t)],
\end{equation}
which boils down with \eqref{3.2}, where we put
\[
a(t)=1,\quad v(t)=\D_{t}^{\nu_{1}}(\rho_{2}\psi)(t),\quad
f_{0}(t)=\D_{t}^{\nu_{1}}(\rho_{1}\psi)(t)-\D_{t}^{\nu_{1}}(\rho_{2}\psi)(t)-\c(t).
\]
Obviously, $\c(t)\in\C([0,t^{*}])$.   Then, in order to utilize
Lemma \ref{l4.1} to \eqref{3.12*}, we have to check the following:

\noindent(I) $\D_{t}^{\nu_{1}}(\rho_{2}\psi)$ and
$\D_{t}^{\nu_{1}}(\rho_{1}\psi)$ are continuous in $[0,t^{*}]$;

\noindent(II) there is an integer positive $n^{*}$ such that
\[
\frac{1}{n^{*}}\D_{t}^{\nu_{1}}(\rho_{2}\psi)(0)-\c_{0}+\D_{t}^{\nu_{1}}(\rho_{1}\psi)(0)\neq
0.
\]

\noindent To verify (I), appealing to \cite[Proposition 5.5]{SV}, we
get
\[
D_{t}^{\nu_{1}}(\rho_{i}\psi)(t)=\rho_{i}(t)\D_{t}^{\nu_{1}}\psi(t)+\psi(0)\D_{t}^{\nu_{1}}\rho_{i}(t)
+\frac{\nu_{1}}{\Gamma(1-\nu_{1})}\mathcal{J}_{\nu_{1}}(t;\rho_{i},\psi),\quad
i=1,2,
\]
with
\[
\mathcal{J}_{\nu_{1}}=
\mathcal{J}_{\nu_{1}}(t;\rho_{i},\psi)=\int_{0}^{t}\frac{[\rho_{i}(t)-\rho_{i}(s)]}{(t-s)^{1+\nu_{1}}}[\psi(s)-\psi(0)]ds.
\]
Taking into account the smoothness of $\psi(t)$ and $\rho_{i}(t)$
 and employing \cite[Lemma 5.6]{SV}, we obtain the
following regularity
\[
\mathcal{J}_{\nu_{1}}\in\C^{\nu_{1}}([0,t^{*}])\quad\text{and}\quad
\rho_{i}\D_{t}^{\nu_{1}}\psi,
\D_{t}^{\nu_{1}}\rho_{i}\in\C^{\alpha\nu_{1}/2}([0,t^{*}]),
\]
which in turn ensures
\[
\D_{t}^{\nu_{1}}(\rho_{i}\psi)\in\C^{\alpha\nu_{1}/2}([0,t^{*}]).
\]

\noindent As for the verification of  (II), denoting
\[
R=R(t)=\frac{\rho_{2}(t)}{\rho_{1}(t)} \quad\text{and}\quad
W=W(t)=\rho_{1}(t)\psi(t),
\]
we first define the function
\[
\mathcal{V}(t,n)=\frac{\D_{t}^{\nu_{1}}(RW)(t)}{n}-\c(t)+\D_{t}^{\nu_{1}}W(t)
\]
for $t\in[0,t^{*}]$ and $n\in\mathbb{N}.$ After that, performing the
straightforward calculations and keeping in mind assumptions of
Theorem \ref{t3.1}, we obtain the smoothness:
\begin{equation}\label{3.14}
 R\in\C^{\nu}([0,T]),\quad
\D_{t}^{\nu_{1}}R\in\C^{\alpha/2}([0,T]),\quad R(0)\neq 0, \quad
\D_{t}^{\nu_{1}}W\in\C^{\alpha\nu_{1}/2}([0,t^{*}]).
\end{equation}
Appealing to \cite[Proposition 5.5]{SV}, we rewrite the function
$\mathcal{V}(t,n)$ in a more appropriate form to the further
analysis
\[
\mathcal{V}(t,n)=[n^{-1}R(t)+1]\D_{t}^{\nu_{1}}W-\c(t)+n^{-1}W(0)\D_{t}^{\nu_{1}}R(t)+\frac{\nu_{1}}{n\Gamma(1-\nu_{1})}\mathcal{J}_{\nu_{1}}(t;R,W).
\]
Taking into account \eqref{3.14}, we derive the following estimates:
\begin{align*}
|\mathcal{J}_{\nu_{1}}(t;R,W)|&\leq C
\|R\|_{\C^{1}([0,T])}\|\D_{t}^{\nu_{1}}W\|_{\C([0,t^{*}])}\int_{0}^{t}s^{\nu_{1}}(t-s)^{-\nu_{1}}ds\leq
C t,\\
|\D_{t}^{\nu_{1}}R(t)|&\leq\frac{1}{\Gamma(1-\nu_{1})}\int_{0}^{t}\bigg|\frac{dR(\tau)}{d\tau}\bigg|(t-\tau)^{-\nu_{1}}d\tau\leq
C\|R\|_{\C^{1}([0,T])}t^{1-\nu_{1}}
\end{align*}
for each $t\in[0,t^{*}]$ and any fixed $n\in\mathbb{N}$, which in
turn provide the equalities
\[
\mathcal{J}_{\nu_{1}}(0;R,W)=0\quad\text{and}\quad
\D_{t}^{\nu_{1}}R(0)=0.
\]
Collecting these relations with \eqref{3.14} and bearing in mind the
definition of $\c_{0}$ (see \eqref{2.1}), we compute
\[
\mathcal{V}(0,n)=[1+n^{-1}R(0)]\D_{t}^{\nu_{1}}W(0)-\c_{0}.
\]
Thanks to the inequalities: $\c_{0}\neq 0$ and
$\D_{t}^{\nu_{1}}W(0)=\D_{t}^{\nu_{1}}(\psi\rho_{1})(0),$ two
options occur:

\noindent$\bullet$ either $\c_{0}\neq 0$ and
$\D_{t}^{\nu_{1}}W(0)\neq 0,$

\noindent$\bullet$ or $\c_{0}\neq 0$ but $\D_{t}^{\nu_{1}}W(0)= 0.$

\noindent The second possibility immediately yields $
\mathcal{V}(0,n)=-\c_{0}\neq 0 $ for any $n\in\mathbb{N}$.

\noindent If $\D_{t}^{\nu_{1}}W(0)\neq 0,$ then
$\mathcal{V}(0,n)\neq 0$ if and only if the positive integer $n$
satisfies the inequality
\[
\frac{1}{n}\neq\frac{\c_{0}-\D_{t}^{\nu_{1}}W(0)}{R(0)\D_{t}^{\nu_{1}}W(0)},
\]
which, in the terms of $\psi$ and $\rho_{i},$ $i=1,2,$ reads as
\[
\frac{1}{n}\neq
-\frac{\rho_{1}(0)\D_{t}^{\nu_{2}}(\rho_{2}\psi)(0)}{\rho_{2}(0)\D_{t}^{\nu_1}(\rho_{1}\psi)(0)}.
\]
Clearly, this inequality holds for any $n\in\mathbb{N},$ if the term
$\frac{\D_{t}^{\nu_{2}}(\rho_{2}\psi)(0)}{\rho_{2}(0)\D_{t}^{\nu_1}(\rho_{1}\psi)(0)}$
is nonnegative.
 Otherwise, it is true (see Remark \ref{r4.1})
for any integer positive $n$ satisfying the inequality
\[
n\geq 1+
\bigg[\frac{-R(0)\D_{t}^{\nu_{1}}(\rho_{1}\psi)(0)}{\D_{t}^{\nu_{2}}(\rho_{2}\psi)(0)}\bigg],
\]
where recalling that the symbol $[\cdot]$ denotes the integer part
of a number.

Thus, this completes the verification of  (II) and, accordingly, the
proof of
 \eqref{3.3*}.
Summing up, we have  constructed the triple $(\nu_{1},\nu_{2},u)$
solving IP \eqref{2.1}-\eqref{2.4}. \qed


\subsection{Uniqueness of a solution in
\eqref{2.2}-\eqref{2.4}}\label{s3.3} Here,  we focus on the
uniqueness of the reconstruction of $(\nu_{1},\nu_{2},u)$ by the
additional measurement \eqref{2.4}.
\begin{lemma}\label{l4.4}
Let assumptions of Theorem \ref{t3.1} hold, then inverse problem
\eqref{2.2}-\eqref{2.4} admits no more than one solution
$(\nu_{1},\nu_{2},u)$, where $\nu_1$ and $\nu_2$ are reconstructed
via the additional measurements \eqref{2.4}, while the function
$u\in C^{2+\alpha,\frac{2+\alpha}{2}\nu_{1}}(\bar{\Omega}_{T})$ is a
unique solution to  the direct problem \eqref{2.2}-\eqref{2.3} with
given $\nu_{1}$ and $\nu_{2}$.
\end{lemma}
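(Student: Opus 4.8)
The plan is to argue that the IP cannot carry two distinct solutions: suppose $(\nu_{1}^{(1)},\nu_{2}^{(1)},u^{(1)})$ and $(\nu_{1}^{(2)},\nu_{2}^{(2)},u^{(2)})$ both solve \eqref{2.2}-\eqref{2.4} with the \emph{same} data $g$, $\mathcal{K}$, $u_{0}$, $\varphi$, $\psi$ and the same coefficients, and deduce that the two triples coincide. The backbone is that the reconstruction formulas \eqref{3.2*}-\eqref{3.3*} express $\nu_{1}$ and $\nu_{2}$ solely through the common quantities $\psi$, $u_{0}$, $\rho_{i}$, $\c$, so every admissible solution is forced to carry precisely these orders.

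\textbf{Step 1 (the order $\nu_{1}$).} Each $u^{(k)}$ is a classical solution of \eqref{2.2}-\eqref{2.3} with leading order $\nu_{1}^{(k)}$, and by \eqref{2.4} one has $\int_{\Omega}u^{(k)}(x,t)dx=\psi(t)$ on $[0,t^{*}]$. The asymptotic analysis behind \cite[Theorem 3.1]{HPV} --- which rests on Lemma \ref{l4.2} and uses $\c_{0}\neq 0$ to secure a nonvanishing leading term --- then gives, in the I type case,
\[
\psi(t)-\int_{\Omega}u_{0}(x)dx=c_{k}\,t^{\nu_{1}^{(k)}}(1+o(1)),\qquad c_{k}\neq 0,\quad t\to 0,
\]
and the analogous expansion for $\rho_{1}(t)\psi(t)-\rho_{1}(0)\int_{\Omega}u_{0}(x)dx$ in the II type case. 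Since the left-hand side is one fixed function, the exponents must agree, so $\nu_{1}^{(1)}=\nu_{1}^{(2)}=:\nu_{1}$, which is precisely the value \eqref{3.2*}.

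\textbf{Step 2 (the order $\nu_{2}$).} With $\nu_{1}$ now common, I would repeat the reduction of Section \ref{s3.2} for each solution separately: integrating \eqref{2.1} over $\Omega$ and using \eqref{2.4} together with Proposition \ref{p3.1}, one reaches relation \eqref{3.12} (resp. \eqref{3.12*}) with $\theta=\nu_{1}-\nu_{2}^{(k)}$, which is exactly the structure \eqref{3.2} required by Lemma \ref{l4.1}. The delicate point --- and the main obstacle here --- is checking that condition \eqref{3.3} holds for some $n^{*}\in\mathbb{N}$ \emph{for each solution}, not merely for the one constructed in Section \ref{s3.2}; but that verification used only $\psi$, $\nu_{1}$, $\c_{0}$ and the $\rho_{i}$, all common to the two solutions, so the case analysis of Section \ref{s3.2} together with Remark \ref{r4.1} applies verbatim. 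Hence Lemma \ref{l4.1} yields
\[
\nu_{1}-\nu_{2}^{(k)}=\theta=\log_{\lambda}\Bigl|\,\underset{t\to 0}{\lim}\tfrac{\mathcal{F}(\lambda t)}{\mathcal{F}(t)}\,\Bigr|,\qquad \lambda\in(0,1),
\]
with $\mathcal{F}$ the function from the statement of Theorem \ref{t3.1}, depending only on $\nu_{1}$ and the data; the right-hand side is therefore the same for $k=1,2$, which forces $\nu_{2}^{(1)}=\nu_{2}^{(2)}=:\nu_{2}$, i.e. the value \eqref{3.3*}.

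\textbf{Step 3 (the component $u$).} Once the orders are shown to be common, $u^{(1)}$ and $u^{(2)}$ both solve the \emph{same} direct problem \eqref{2.2}-\eqref{2.3} with the fixed pair $(\nu_{1},\nu_{2})$; by Lemma \ref{l4.3} that problem possesses a unique classical solution in $\C^{2+\alpha,\frac{2+\alpha}{2}\nu_{1}}(\bar{\Omega}_{T})$, so $u^{(1)}=u^{(2)}$. Therefore $(\nu_{1}^{(1)},\nu_{2}^{(1)},u^{(1)})=(\nu_{1}^{(2)},\nu_{2}^{(2)},u^{(2)})$, which is the claim.
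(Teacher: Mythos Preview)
Your proof is correct, and Steps 1 and 3 match the paper's argument; Step 2, however, takes a different route. You observe that each putative solution satisfies \eqref{3.12} (resp.\ \eqref{3.12*}) with exponent $\theta=\nu_{1}-\nu_{2}^{(k)}$ and the \emph{same} right-hand side $\mathcal{F}_{0}=\mathcal{F}$, built from $\nu_{1}$ and the common data only; since the verification of \eqref{3.3} in Section \ref{s3.2} likewise uses only $\nu_{1}$, $\psi$, $\c_{0}$ and the $\rho_{i}$, Lemma \ref{l4.1} applies to each solution and returns the single value \eqref{b.1} for $\theta$, forcing $\nu_{2}^{(1)}=\nu_{2}^{(2)}$. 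The paper instead reruns the Mittag-Leffler resolvent trick from the proof of Lemma \ref{l4.1} simultaneously for the two candidate exponents, equates the resulting convolutions to obtain \eqref{3.16}, and passes to the limit $t\to 0$ to reach the contradiction $0=\tfrac{(\nu_{1}-\nu_{2})\Gamma(\bar{\nu}_{2})}{(\nu_{1}-\bar{\nu}_{2})\Gamma(\nu_{2})}$. Your approach is shorter and more conceptual --- it recognizes that Lemma \ref{l4.1} already encodes uniqueness of $\theta$ given $\mathcal{F}_{0}$ --- while the paper's explicit computation makes the mechanism (the factor $t^{\bar{\nu}_{2}-\nu_{2}}$ vanishing) visible but is otherwise redundant once Lemma \ref{l4.1} is in hand.
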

\begin{proof}
We will exploit the proof by contradiction. Namely, we assume the
existence of two different triples $(\nu_{1},\nu_{2},u)$ and
$(\bar{\nu}_{1},\bar{\nu}_{2},\bar{u})$ which solve
\eqref{2.2}-\eqref{2.4} with the same right-hand sides, coefficients
in the operators and the observation data. Recasting the arguments
leading to \cite[Lemma 4.2]{HPV}, we end up with the equality
$\nu_{1}=\bar{\nu}_{1}$. Therefore, if we show that
 \begin{equation}\label{eq}
\nu_{2}=\bar{\nu}_{2},
 \end{equation}
 then Lemma \ref{l4.3} arrives at the equality $u=\bar{u},$ the
 latter means
 the uniqueness of a solution to \eqref{2.1}-\eqref{2.3}.

Hence, we are left to examine the equality to $\nu_{2}$ and
 $\bar{\nu}_{2}$. Here we provide the detailed proof of \eqref{eq} in the case of
 $\D_{t}$ being the I type FDO. The case of the II type FDO is treated
 with the similar arguments. For simplicity, we put
 $0<\nu_{2}<\bar{\nu}_{2}<1$. Appealing to \eqref{3.12}, we end up
 with the system
 \[
\begin{cases}
(\omega_{\nu_1-\nu_2}*\D_{\tau}^{\nu_{1}}\psi)(\tau)=\frac{\rho_{1}(\tau)}{\rho_{2}(\tau)}\D_{\tau}^{\nu_{1}}\psi(\tau)-\frac{\c(\tau)}{\rho_{2}(\tau)},\\
(\omega_{\nu_1-\bar{\nu}_2}*\D_{\tau}^{\nu_{1}}\psi)(\tau)=\frac{\rho_{1}(\tau)}{\rho_{2}(\tau)}\D_{\tau}^{\nu_{1}}\psi(\tau)-\frac{\c(\tau)}{\rho_{2}(\tau)}
\end{cases}
 \]
 for each $\tau\in[0,t^{*}]$.
At this point, we will exploit the calculations leading to
\eqref{3.6}. Namely, multiplying the first equality by
 $s_{n,\nu_1-\nu_2}(t-\tau)$ and the second equality by
 $s_{n,\nu_1-\bar{\nu}_2}(t-\tau)$ and integrating over $(0,t),$ $\tau<t\leq
 t^{*}$, we have
\[
\begin{cases}
(s_{n,\nu_{1}-\nu_{2}}*\mathcal{U})(t)=(\frac{1}{n}*\D_{t}^{\nu_{1}}\psi)(t),\\
(s_{n,\nu_{1}-\bar{\nu}_{2}}*\mathcal{U})(t)=(\frac{1}{n}*\D_{t}^{\nu_{1}}\psi)(t),
\end{cases}
\quad\text{where}\quad \mathcal{U}=
[\rho_{1}(t)\rho_{2}^{-1}(t)+n^{-1}]\D_{t}^{\nu_{1}}\psi(t)-\frac{\c(t)}{\rho_{2}(t)}.
\]
The last system tells that
$
(s_{n,\nu_{1}-\nu_{2}}*\mathcal{U})(t)=(s_{n,\nu_{1}-\bar{\nu}_{2}}*\mathcal{U})(t)
$
for any $t\in[0,t^{*}]$ and each fixed $n\in\mathbb{N}$. Performing
the change of the variable: $\tau=tz,$ in the integrals, we obtain
\begin{align}\label{3.16}\notag
&t^{\bar{\nu}_{2}-\nu_{2}}\int_{0}^{1}(1-z)^{\nu_{1}-\nu_{2}-1}E_{\nu_{1}-\nu_{2},\nu_{1}-\nu_{2}}(-nt^{\nu_{1}-\nu_{2}}(1-z)^{\nu_{1}-\nu_{2}})
\mathcal{U}(n,tz)dz
\\&=
\int_{0}^{1}(1-z)^{\nu_{1}-\bar{\nu}_{2}-1}E_{\nu_{1}-\bar{\nu}_{2},\nu_{1}-\bar{\nu}_{2}}(-nt^{\nu_{1}-\bar{\nu}_{2}}(1-z)^{\nu_{1}-\bar{\nu}_{2}})
\mathcal{U}(n,tz)dz
\end{align}
for any $t\in[0,t^{*}]$ and each fixed $n\in\mathbb{N}$. Appealing
to the assumptions of Theorem \ref{t3.1} and the properties of the
Mittag-Leffler functions, we end up with the boundedness of the
functions $\mathcal{U}(n,t)$ and
$E_{\nu_{1}-\nu_{2},\nu_{1}-\nu_{2}}(t)$ and
$E_{\nu_{1}-\bar{\nu}_{2},\nu_{1}-\bar{\nu}_{2}}(t)$ for each
$t\in[0,t^{*}]$ and each fixed $n\in\mathbb{N}$. Besides, the
following inequalities hold:
\[
\mathcal{U}(n^{*},0)\neq 0,\quad
E_{\nu_{1}-\nu_{2},\nu_{1}-\nu_{2}}(0)=\frac{1}{\Gamma(\nu_{2})},\quad
E_{\nu_{1}-\bar{\nu}_{2},\nu_{1}-\bar{\nu}_{2}}(0)=\frac{1}{\Gamma(\bar{\nu}_{2})}
\]
with  $n^{*}\in\mathbb{N}$ (see \eqref{3.13}). We recall that the
assumption $\c_{0}\neq 0$ and Remark \ref{r4.1} arrive at the
existence of $n^{*}\in\mathbb{N}$, which provides  the first
inequality in these relations.

\noindent In fine, keeping in mind the inequality
$\bar{\nu}_{2}>\nu_{2}$, we substitute $n=n^{*}$ to \eqref{3.16} and
then pass to the limit there as $t\to 0$. Exploiting Lebesgu\'{e}s
dominated convergence theorem, we conclude that
\[
0=\frac{\Gamma(\bar{\nu_{2}})}{\Gamma(\nu_{2})}\frac{\int_{0}^{1}(1-z)^{\nu_{1}-\bar{\nu}_{2}-1}dz}{\int_{0}^{1}(1-z)^{\nu_{1}-\nu_{2}-1}dz}
=\frac{(\nu_1-\nu_2)\Gamma(\bar{\nu_{2}})}{(\nu_{1}-\bar{\nu}_{2})\Gamma(\nu_{2})}.
\]
This  contradiction may be removed with admitting $
\nu_{2}=\bar{\nu}_{2}, $ which completes the proof of Lemma
\ref{l4.4}.
\end{proof}

\section{Proof of Lemma \ref{l3.2}}\label{s4}

\noindent For simplicity of presentation, we verify this lemma in
the case of $\mathbf{D}_{t}$ having form \eqref{2.2} and, hence,
$\nu_{i}=\nu_2$, the remaining cases are treated with the similar
arguments.
 Setting $u_{1,2}=u_1,$ $u_{2,2}=u_{2}$, $\beta_{1,2}=\beta_1,$
 $\beta_{2,2}=\beta_{2}$ and
\[
U=u_{2}-u_{1} \quad \text{and}\quad \bar{g}_{0}=\begin{cases}
\rho_{2}[\D_{t}^{\beta_{2}}u_{2}-\D_{t}^{\beta_{1}}u_{2}]\qquad\quad
\text{in the case of the I type FDO,}\\
\D_{t}^{\beta_{2}}(\rho_{2}u_{2})-\D_{t}^{\beta_{1}}(\rho_{2}u_{2})
\quad \text{in the case of the II type FDO,}
\end{cases}
\]
and taking into account that $u_1$ and $u_2$ solve
\eqref{2.2}-\eqref{2.3} with $\nu_{2}=\beta_{1}$ and
$\nu_{2}=\beta_{2},$ respectively, we arrive at the initial-boundary
value problem for unknown function $U$
\begin{equation}\label{4.1}
\begin{cases}
\D_{t}U-\mathcal{L}_{1}U-\mathcal{K}*\mathcal{L}_{2}U=\bar{g}_{0}\quad\text{in}\quad\Omega_{T},\\
\mathcal{M}U+(1-d)\mathcal{K}*\mathcal{M}U=0\quad\,\text{ on}\quad
\partial\Omega_{T},\\
U(x,0)=0\qquad\qquad\qquad\qquad \text{ in}\quad \bar{\Omega},
\end{cases}
\end{equation}
where
$$
\mathbf{D}_t=\begin{cases}
\rho_1(t)\mathbf{D}^{\nu_1}_{t}-\rho_{2}(t)\mathbf{D}_{t}^{\beta_1}\qquad\text{in the case of the I type FDO,}\\
\mathbf{D}^{\nu_1}_{t}\rho_1(t)-\mathbf{D}_{t}^{\beta_1}\rho_{2}(t)\qquad\text{in
the case of the II type FDO.}
\end{cases}
$$
Applying Lemma \ref{l4.3} to this problem  tells that \eqref{4.1}
has a unique global classical solution satisfying the bound
\[
\|U\|_{\C^{2+\alpha,\frac{2+\alpha}{2}\nu_{1}}(\bar{\Omega}_{T})}+\|\mathbf{D}_{t}^{\beta_{1}}U\|_{\C^{\alpha,\frac{\nu_{1}\alpha}{2}}(\bar{\Omega}_{T})}\leq
C\|\bar{g}_{0}\|_{\C^{\alpha,\alpha\nu_{1}/2}(\bar{\Omega}_{T})}.
\]
Hence, to complete the proof of Lemma \ref{l3.2}, we have to obtain
the proper estimate of the term
$\|\bar{g}_{0}\|_{\C^{\alpha,\alpha\nu_{1}/2}(\bar{\Omega}_{T})}.$
Indeed, we are left to  show that
\begin{equation}\label{4.2}
\|\bar{g}_{0}\|_{\C^{\alpha,\alpha\nu_{1}/2}(\bar{\Omega}_{T})}\leq
C[\beta_{2}-\beta_{1}][\|u_{0}\|_{\C^{2+\alpha}(\bar{\Omega})}+\|g_{0}\|_{\C^{\alpha,\alpha\nu_{1}/2}(\bar{\Omega}_{T})}
+
\|\varphi\|_{\C^{1+\alpha,\frac{1+\alpha}{2}\nu_{1}}(\partial\Omega_{T})}]
\end{equation}
with $C$ being independent of the difference $\beta_{2}-\beta_{1}.$

At this point, we  verify \eqref{4.2} in the case of $\D_{t}$ being
the I type FDO. Exploiting Proposition \ref{p3.1}, we rewrite
$\bar{g}_{0}$ in the form
\begin{align*}
\bar{g}_{0}&=\rho_{2}(t)([\omega_{\nu_{1}-\beta_{2}}-\omega_{\nu_{1}-\beta_{1}}]*\D_{t}^{\nu_1}u_{2})(t)\equiv
A_{1}+A_{2},\\
A_{1}&=\rho_{2}(t)\bigg(\frac{1}{\Gamma(\nu_{1}-\beta_{2})}-\frac{1}{\Gamma(\nu_{1}-\beta_{1})}\bigg)(t^{\nu_{1}-\beta_{2}-1}*\D_{t}^{\nu_{1}}u_{2})(t),\\
A_{2}&=\frac{\rho_{2}(t)}{\Gamma(\nu_{1}-\beta_{1})}(k*\D_{t}^{\nu_1}u_{2})(t)\quad
\text{with}\quad
k=k(t)=t^{\nu_{1}-\beta_{2}-1}-t^{\nu_{1}-\beta_{1}-1}.
\end{align*}
As for the evaluation of $A_{1},$ performing straightforward
calculations and taking into account Proposition \ref{p3.1},
assumptions h2, h3 and \cite[Proposition 10.1]{KPSV}, we get
\[
\|A_{1}\|_{\C^{\alpha,\alpha\nu_{1}/2}(\bar{\Omega}_{T})}\leq
C\|\rho_{2}\|_{\C^{\nu}([0,T])}\|\D_{t}^{\beta_{2}}u_{2}\|_{\C^{\alpha,\alpha\nu_{1}/2}(\bar{\Omega}_{T})}(\beta_{2}-\beta_{1})
\]
with the positive quantity  $C$ being independent of
$\beta_{2}-\beta_{1}$.

Coming to the term $A_{2},$ thanks to assumption h2, we can utilize
\cite[Proposition 5.1]{HPV} with
\[
\gamma=1-\nu_1+\beta_{2},\quad\bar{\gamma}=1-\nu_1+\beta_{1},\quad
\mathcal{K}_{0}=1,\quad w(x,t)=\D_{t}^{\nu_{1}}u_{2}(x,t),\quad
\beta=\frac{\alpha\nu_{1}}{2},
\]
and then we end up with the estimate
\[
\|A_{2}\|_{\C^{\alpha,\alpha\nu_{1}/2}(\bar{\Omega}_{T})}\leq
C[\beta_{2}-\beta_{1}]\|\rho_{2}\|_{\C^{\nu}([0,T])}\|\D_{t}^{\nu_{1}}u_{2}\|_{\C^{\alpha,\alpha\nu_{1}/2}(\bar{\Omega}_{T})}.
\]
Thus, collecting estimates of $A_{1}$ and $A_{2},$ and applying the
bound to $u_{2}$ dictated by Lemma \ref{l4.3}, we arrive at
\eqref{4.2} in the case of the I type FDO.

If $\D_{t}$ is the II type FDO, we have
\[
\bar{g}_{0}=[\omega_{\nu_{1}-\beta_{2}}-\omega_{\nu_{1}-\beta_{1}}]*\D_{t}^{\nu_{1}}(\rho_{2}u_{2}).
\]
Exploiting the regularity of $\rho_{2}$ and $\D_{t}^{\nu_{1}}u_{2}$
(see Lemma \ref{l4.3}), we employ \cite[Proposition 5.5]{SV} and,
performing technical calculations, obtain the bound
\[
\|\D_{t}^{\nu_{1}}(\rho_{2}u_{2})\|_{\C^{\alpha,\alpha\nu_{1}/2}(\bar{\Omega}_{T})}\leq
C\|\rho_{2}\|_{\C^{\nu}([0,T])}\|\D_{t}^{\nu_{1}}u_{2}\|_{\C^{\alpha,\alpha\nu_{1}/2}(\bar{\Omega}_{T})}.
\]
After that, recasting the arguments leading to \eqref{4.2} in the
case of  $\D_{t}$ being the I type FDO, we reach the desired result,
which completes the proof of Lemma \ref{l3.2}. \qed


\section{Reconstruction of  $(\nu_{1},\nu_{i^{*}},\rho_{i^{*}},u)$ by the Measurement $\psi$
}\label{s5}

\noindent Here, assuming that $\rho_{i^{*}}(t)\equiv \rho_{i^{*}}$
is unknown constant, we propose the approach to recovery not only
the orders $\nu_{1}$ and $\nu_{i^{*}}$ but also the coefficient
$\rho_{i^{*}}$ via the observation data \eqref{2.4}. Since $\c_{0},$
$\c(t)$ and formula \eqref{3.2*} are independent of $\rho_{i^{*}}$,
the order $\nu_{1}$ can be computed by \eqref{3.2*} even if
$\rho_{i^{*}}$ is unknown. As for finding $\nu_{i^{*}},$ and
$\rho_{i^{*}}$, we first focus on their recovery in the case of the
two-term fractional differential operator $\mathbf{D}_{t}$
\eqref{2.2} and then we extend the obtained results to the case of
$\mathbf{D}_{t}$ having form \eqref{3.4*}. To calculate $\nu_{2},$
we apply the result similar to Lemma \ref{l4.1}, where
$\omega_{\theta}$ is replaced by
$\hat{\omega}_{\theta}=b\omega_{\theta}$ with some constant $b.$
\begin{proposition}\label{p5.1}
Let $b\neq 0$ be a real constant, and let
$v,\mathcal{F}_{0}\in\C([0,T])$. If \eqref{3.3} along with
\begin{equation}\label{5.1}
(\hat{\omega}_{\theta}*v)(t)=\mathcal{F}_{0}(t)
\end{equation}
hold for each $t\in[0,T],$ where
$\mathcal{F}_0(t)=a(t)v(t)+f_{0}(t)$, then $\theta$ satisfies
\eqref{b.1}.

\noindent If,  additionally, there is $t_{0}\in(0,T]$ such that $
\mathcal{F}_{0}(t_{0})\neq 0, $ then
\begin{equation}\label{5.1*}
b=\frac{\mathcal{F}_{0}(t_0)}{(\omega_{\theta}*v)(t_{0})}.
\end{equation}
\end{proposition}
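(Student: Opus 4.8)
The plan is to reduce Proposition \ref{p5.1} to Lemma \ref{l4.1} by absorbing the constant $b$ into the unknowns, and then to read off $b$ from a single nonvanishing value. First I would observe that the identity $(\hat\omega_\theta*v)(t)=\mathcal{F}_0(t)$ is just $(\omega_\theta*(bv))(t)=\mathcal{F}_0(t)$, so setting $\tilde v=bv$ and $\tilde a(t)=a(t)/b$ gives $(\omega_\theta*\tilde v)(t)=\tilde a(t)\tilde v(t)+f_0(t)=\mathcal{F}_0(t)$, which is precisely the structure \eqref{3.2} with $\tilde v$ in place of $v$. The only thing to check before invoking Lemma \ref{l4.1} is that the hypothesis \eqref{3.3} survives the rescaling: we need $\tilde v(0)/n^*+\mathcal{F}_0(0)\neq 0$ for some $n^*\in\mathbb N$, i.e. $bv(0)/n^*+\mathcal{F}_0(0)\neq 0$. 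This is \emph{not} literally identical to the assumed $v(0)/n^*+\mathcal{F}_0(0)\neq 0$, so I would address it: if $v(0)=0$ the condition is just $\mathcal{F}_0(0)\neq 0$ and is unaffected by the factor $b$; if $v(0)\neq 0$, then since $b\neq 0$ the quantity $bv(0)$ is a nonzero real number, and by the argument in Remark \ref{r4.1}(ii)--(iii) one can choose $n^*$ large enough (any $n^*>|bv(0)/\mathcal{F}_0(0)|$ if $\mathcal{F}_0(0)\neq 0$, or any $n^*$ at all if $\mathcal{F}_0(0)=0$) so that $bv(0)/n^*+\mathcal{F}_0(0)\neq 0$. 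Hence the rescaled data satisfy the hypotheses of Lemma \ref{l4.1}, and applying that lemma to $(\tilde v,\mathcal{F}_0)$ yields $\theta=\log_\lambda\bigl|\lim_{t\to0}\mathcal{F}_0(\lambda t)/\mathcal{F}_0(t)\bigr|$ for every $\lambda\in(0,1)$, which is exactly \eqref{b.1}.

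For the recovery of $b$ itself, once $\theta$ is known the identity $(\hat\omega_\theta*v)(t)=\mathcal{F}_0(t)$ reads $b\,(\omega_\theta*v)(t)=\mathcal{F}_0(t)$ for all $t\in[0,T]$. If there exists $t_0\in(0,T]$ with $\mathcal{F}_0(t_0)\neq 0$, then necessarily $(\omega_\theta*v)(t_0)=\mathcal{F}_0(t_0)/b\neq 0$, so the denominator in \eqref{5.1*} is nonzero and one simply solves for $b$ to obtain $b=\mathcal{F}_0(t_0)/(\omega_\theta*v)(t_0)$. This second part is essentially immediate.

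The main obstacle is the first one: verifying that the non-degeneracy condition \eqref{3.3} is preserved (in a possibly $n^*$-dependent way) under the substitution $v\mapsto bv$, so that Lemma \ref{l4.1} genuinely applies. Since $b$ may be negative or of large/small modulus, the specific $n^*$ that worked for $v$ need not work for $bv$; the resolution is that the structural dichotomy of Remark \ref{r4.1} (the bad set of $n$ is a single excluded value $n^{-1}=-\mathcal{F}_0(0)/(bv(0))$, hence all sufficiently large $n$ are admissible) guarantees a suitable $n^*$ still exists. Everything after that — the reduction to \eqref{3.2}, the application of Lemma \ref{l4.1}, and solving the scalar equation for $b$ — is routine.
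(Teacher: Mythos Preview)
Your proof is correct and arrives at exactly the same key verification as the paper: one must check that $\mathcal{F}_0(0)+bv(0)/n\neq 0$ for some $n\in\mathbb{N}$, which you handle via the dichotomy of Remark~\ref{r4.1} just as the paper does. The only difference is presentational: the paper re-runs the Mittag--Leffler argument of Lemma~\ref{l4.1} using the modified resolvent identity $b\,s_{n,\theta}+n(\hat\omega_\theta*s_{n,\theta})=b$, whereas you absorb $b$ into the unknown via $\tilde v=bv$ and invoke Lemma~\ref{l4.1} as a black box---a slightly cleaner route that avoids repeating the derivation but yields the identical non-degeneracy condition and the same formula for~$b$.
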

\begin{proof}
The first part of this claim is verified with the arguments (with
minor modifications) leading to Lemma \ref{l4.1}. Namely, instead of
\eqref{3.5}
 we employ the identity
\[
b s_{n, \theta}(t)+n(\hat{\omega}_{\theta}*s_{n, \theta})(t)=b
\]
with any $t\in[0,t^{*}]$ and each fixed $n\in\mathbb{N}$. After
that, denoting
\[
\hat{\mathcal{U}}(n,t)=\mathcal{F}_{0}(t)+bv(t)n^{-1},
\]
and recasting  step-by-step the proof of Lemma \ref{l4.1}, we arrive
at \eqref{b.1} if only
\begin{equation}\label{5.0}
\mathcal{F}_{0}(t)+bv(t)n^{-1}\neq 0
\end{equation}
for some $\hat{n}\in\mathbb{N}.$

Clearly, if either $v(0)=0$ or $b=1$, then assumption \eqref{3.3}
ensures \eqref{5.0} with $\hat{n}=n^{*}$. After that, assuming
$b\neq 1$ and $v(0)\neq 0,$ we aim to show that there is some
integer positive $\hat{n}$ for which \eqref{5.0} holds.  Since
$b\neq 0$, relation \eqref{5.0} is equivalent to the inequality
\begin{equation}\label{5.2}
\frac{1}{\hat{n}}\neq -\frac{a(0)v(0)+f_{0}(0)}{b v(0)}.
\end{equation}
It is apparent that, there is at least one $\hat{n}\in\mathbb{N}$
satisfying \eqref{5.2}. Namely, if the right-hand side of this
inequality is nonpositive, then \eqref{5.2} is fulfilled for all
$\hat{n}\in\mathbb{N}.$ Otherwise, selecting
\[
\hat{n}=1+\bigg[\frac{-bv(0)}{a(0)v(0)+f_{0}(0)}\bigg],
\]
we provide the fulfillment of \eqref{5.2}. Here, we again used the
symbol $[\cdot]$ to denote the integer part of a number.

Coming to \eqref{5.1*}, it is a simple consequence of \eqref{5.1}
and the assumption on nonvanishing the right-hand side in
\eqref{5.1} at $t=t_{0}$. That completes the proof of Proposition
\ref{p5.1}.
\end{proof}
At this point, collecting Proposition \ref{p5.1} with arguments of
Section \ref{s3.2} derives formula \eqref{3.3*} to the computation
of $\nu_{2},$ where $\mathcal{F}(t)$ is replaced by
\begin{equation}\label{5.7}
\widetilde{\mathcal{F}}(t)=\begin{cases}
\rho_{1}(t)\D_{t}^{\nu_{1}}\psi(t)-\c(t)\qquad \text{in the case of
the I type FDO,}\\
\D_{t}^{\nu_{1}}(\rho_{1}(t)\psi(t))-\c(t)\quad \text{in the case of
the II type FDO.}
\end{cases}
\end{equation}
Then, exploiting Proposition \ref{p5.1} allows us to look for the
unknown coefficient $\rho_{2}$. To this end, we should rewrite the
requirements in Proposition \ref{p5.1} in the term of given data in
\eqref{2.2}-\eqref{2.4}.

First, we discuss the case of $\D_{t}$ being the I type FDO, i.e.
\[
\D_{t}u(x,t)=\rho_{1}(t)\D_{t}^{\nu_{1}}u(x,t)-\rho_{2}\D_{t}^{\nu_{2}}u(x,t).
\]
Assuming $\rho_{2}\equiv const,$ the arguments of Section \ref{s3.2}
tell us that equality \eqref{3.12} can be rewritten
\begin{equation*}\label{5.6}
\rho_{2}(\omega_{\nu_{1}-\nu_{2}}*\D_{t}^{\nu_{1}}\psi)(t)=\rho_{1}(t)\D_{t}^{\nu_{1}}\psi(t)-\c(t),\quad
t\in[0,t^{*}].
\end{equation*}
Clearly, this equality boils down with \eqref{5.1} where we set
\[
\theta=\nu_{1}-\nu_2,\quad
\hat{\omega}_{\theta}(t)=\rho_{2}\omega_{\nu_{1}-\nu_{2}},\quad
v(t)=\D_{t}^{\nu_{1}}\psi,\quad a(t)=\rho_{1}(t),\quad
f_{0}(t)=-\c(t), \quad T=t^{*}.
\]
If there is $t_{0}\in(0,t^{*}]$ such that
\begin{equation*}\label{5.3}
\rho_{1}(t_{0})\D_{t}^{\nu_{1}}\psi(t_{0})-\c(t_{0})\neq 0,
\end{equation*}
then we can utilize Proposition \ref{p5.1} and find
\begin{equation*}\label{5.4}
\rho_{2}=\frac{\rho_{1}(t_{0})\D_{t}^{\nu_{1}}\psi(t_{0})-\c(t_{0})}{(\omega_{\nu_{1}-\nu_{2}}*\D_{t}^{\nu_{1}}\psi)(t_{0})}.
\end{equation*}
Coming to the case of the II type FDO, we have
\[
\D_{t}\psi(t)=\D_{t}^{\nu_{1}}(\rho_{1}\psi)(t)-\rho_{2}\D_{t}^{\nu_{2}}\psi(t).
\]
Further, recasting the arguments of Section \ref{s3.2} leading to
\eqref{3.12*}, we derive the equality
\[
(\hat{\omega}_{\nu_{1}-\nu_{2}}*\D_{t}^{\nu_{1}}\psi)(t)=\D_{t}^{\nu_{1}}\psi(t)+[\D_{t}^{\nu_{1}}(\rho_{1}\psi)(t)-\D_{t}^{\nu_{1}}\psi(t)-\c(t)]
\]
for each $t\in[0,t^{*}]$, which has the form of \eqref{5.1} with
\[
v(t)=\D_{t}^{\nu_{1}}\psi,\quad\theta=\nu_{1}-\nu_2,\quad
\mu_{2}=\nu_{2},\quad a(t)=1,\quad
f_{0}=\D_{t}^{\nu_{1}}(\rho_{1}\psi)(t)-\D_{t}^{\nu_{1}}\psi(t)-\c(t).
\]
In Section \ref{s3.2}, we have demonstrated that
$\D_{t}^{\nu_{1}}(\rho_{1}\psi)(t),$ $\c(t)\in\C([0,t^{*}])$ and
$\D_{t}^{\nu_{1}}\psi\in\C^{\alpha\nu_{1}/2}([0,t^{*}])$. Thus, if
there exists $t_{0}\in(0,t^{*}]$ such that
\[
\D_{t}^{\nu_{1}}(\rho_{1}\psi)(t_{0})-\c(t_{0})\neq 0,
\]
then Proposition \ref{p5.1} arrives at  the equality
\[
\rho_{2}=\frac{\D_{t}^{\nu_{1}}(\rho_{1}\psi)(t_{0})-\c(t_{0})}{(\omega_{\nu_{1}-\nu_{2}}*\D_{t}^{\nu_{1}}\psi)(t_{0})}.
\]
Finally, substituting parameters $\nu_{1},$ $\nu_{2}$ and $\rho_{2}$
to  \eqref{2.2}-\eqref{2.3}, we  find $u$ via Lemma \ref{l4.3}.
Thus, exploiting the describing above technique, we solve IP
\eqref{2.2}-\eqref{2.4} related with finding
$(\nu_{1},\nu_{2}.\rho_{2},u)$ by additional measurement
\eqref{2.4}. In summary, we claim the following.
\begin{theorem}\label{t5.1}
Let $\nu_{1},\nu_{2}$ and $\rho_{2}\equiv const.\neq 0$ be unknown
parameters in \eqref{2.2}, and let assumptions of Theorem \ref{t3.1}
hold. Then, $\nu_{1}$ and $\nu_{2}$ are computed  via \eqref{3.2*}
and \eqref{3.3*} with $\mathcal{F}(t)=\widetilde{\mathcal{F}}(t)$
given by \eqref{5.7}.

\noindent If, in addition, there exists  $t_{0}\in(0,t^{*}]$ such
that $ \widetilde{\mathcal{F}}(t_{0})\neq 0,$  then
\begin{equation}\label{rho}
\rho_{2}=\frac{\widetilde{\mathcal{F}}(t_{0})}{(\omega_{\nu_{1}-\nu_{2}}*\D_{t}^{\nu_{1}}\psi)(t_{0})}
\end{equation}
and the function
$u\in\C^{2+\alpha,\frac{2+\alpha}{2}\nu_{1}}(\bar{\Omega}_{T})$
solves the problem \eqref{2.2}-\eqref{2.3}.
\end{theorem}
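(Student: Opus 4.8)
\textbf{Proof proposal for Theorem \ref{t5.1}.}
The plan is to adapt the two–stage argument of Section \ref{s3} (existence plus uniqueness), inserting the reconstruction of the constant $\rho_{2}$ between the computation of $\nu_{1}$ and the solution of the direct problem. First, since the quantities $\c_{0}$, $\c(t)$, $\mathcal{I}(t)$ and the formula \eqref{3.2*} do not involve $\rho_{2}$ at all (the coefficient $\rho_{2}$ multiplies $\D_{t}^{\nu_{2}}u$, which disappears after integration over $\Omega$ up to the explicitly written terms in $\c$), the order $\nu_{1}$ is obtained verbatim as in \cite[Theorem 3.1]{HPV}. Then I integrate \eqref{2.1} over $\Omega$, use \eqref{2.4}, and invoke Lemma \ref{l4.3} together with Proposition \ref{p3.1} to rewrite the resulting scalar identity in the form \eqref{5.1} with $\hat\omega_{\theta}=\rho_{2}\,\omega_{\nu_{1}-\nu_{2}}$, $\theta=\nu_{1}-\nu_{2}$, and $\mathcal{F}_{0}=\widetilde{\mathcal{F}}$ as in \eqref{5.7}; this is precisely the computation already carried out in Section \ref{s3.2} leading to \eqref{3.12} and \eqref{3.12*}, now with the constant $\rho_{2}$ pulled out of the convolution rather than absorbed into the kernel.

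Next I apply Proposition \ref{p5.1}. For the first conclusion of that proposition I must verify condition \eqref{3.3} for the present data; this is exactly the case analysis already performed around \eqref{3.13} (for the I type) and in step (II) of Section \ref{s3.2} (for the II type), where the assumption $\c_{0}\neq0$ together with Remark \ref{r4.1} produces a suitable $n^{*}\in\mathbb{N}$ — these arguments do not use any structural property of $\rho_{2}$ beyond $\rho_{2}\equiv const.\neq0$, so they transfer without change. Hence \eqref{3.3*} with $\mathcal{F}=\widetilde{\mathcal{F}}$ gives $\nu_{2}$. For the recovery of $\rho_{2}$ I use the second part of Proposition \ref{p5.1}: under the extra hypothesis that $\widetilde{\mathcal{F}}(t_{0})\neq0$ for some $t_{0}\in(0,t^{*}]$, formula \eqref{5.1*} reads $b=\rho_{2}=\widetilde{\mathcal{F}}(t_{0})/(\omega_{\nu_{1}-\nu_{2}}*\D_{t}^{\nu_{1}}\psi)(t_{0})$, which is \eqref{rho}; the denominator is nonzero precisely because the numerator is, by \eqref{5.1}. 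With $\nu_{1},\nu_{2},\rho_{2}$ all determined, substituting them into \eqref{2.2}--\eqref{2.3} and applying Lemma \ref{l4.3} yields the function $u\in\C^{2+\alpha,\frac{2+\alpha}{2}\nu_{1}}(\bar{\Omega}_{T})$ and completes the existence half.

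For uniqueness I argue by contradiction as in Lemma \ref{l4.4}: given two solution quadruples $(\nu_{1},\nu_{2},\rho_{2},u)$ and $(\bar\nu_{1},\bar\nu_{2},\bar\rho_{2},\bar u)$ with the same data and the same $\psi$, the equality $\nu_{1}=\bar\nu_{1}$ follows from \cite[Lemma 4.2]{HPV}. The new point is that I must simultaneously pin down the pair $(\nu_{2},\rho_{2})$ rather than $\nu_{2}$ alone. Writing the two scalar identities of the form $\rho_{2}(\omega_{\nu_{1}-\nu_{2}}*\D_{t}^{\nu_{1}}\psi)=\widetilde{\mathcal{F}}$ and $\bar\rho_{2}(\omega_{\nu_{1}-\bar\nu_{2}}*\D_{t}^{\nu_{1}}\psi)=\widetilde{\mathcal{F}}$, I convolve with the appropriate Mittag-Leffler resolvent kernels $s_{n,\nu_{1}-\nu_{2}}$, $s_{n,\nu_{1}-\bar\nu_{2}}$ (now solving $b\,s_{n,\theta}+n(\hat\omega_{\theta}*s_{n,\theta})=b$ as in the proof of Proposition \ref{p5.1}), perform the rescaling $\tau=tz$, and pass to the limit $t\to0$ via dominated convergence exactly as in \eqref{3.16}; if $\nu_{2}\neq\bar\nu_{2}$ the leading power of $t$ forces the limit of one side to vanish while the other side tends to a nonzero multiple of $\mathcal{U}(n^{*},0)\neq0$, a contradiction, so $\nu_{2}=\bar\nu_{2}$, and then evaluating \eqref{5.1*} at a common $t_{0}$ with $\widetilde{\mathcal{F}}(t_{0})\neq0$ gives $\rho_{2}=\bar\rho_{2}$. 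Finally $u=\bar u$ by Lemma \ref{l4.3}. I expect the main obstacle to be the limit computation in the uniqueness step: one has to handle the constants $\rho_{2},\bar\rho_{2}$ and the differing Mittag-Leffler prefactors carefully so that the $t\to0$ asymptotics isolate $\nu_{2}$ cleanly, and one must confirm that the choice of $n^{*}$ from condition \eqref{3.3} (which now depends on $\rho_{2}$ through $\widetilde{\mathcal{F}}$) can be made uniformly for both candidate solutions — but since $\widetilde{\mathcal{F}}(0)=-\c_{0}+\rho_{1}(0)\D_{t}^{\nu_{1}}\psi(0)$ is the same for both (it does not see $\rho_{2}$ or $\bar\rho_{2}$, which multiply a term vanishing at $t=0$), this is in fact automatic.
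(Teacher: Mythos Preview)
Your proposal is correct and follows essentially the same route as the paper: recover $\nu_{1}$ via \eqref{3.2*} (which does not involve $\rho_{2}$), integrate \eqref{2.1} over $\Omega$ and use Proposition \ref{p3.1} to arrive at an identity of the form \eqref{5.1} with $\hat\omega_{\theta}=\rho_{2}\,\omega_{\nu_{1}-\nu_{2}}$, then apply Proposition \ref{p5.1} to obtain $\nu_{2}$ from \eqref{3.3*} with $\widetilde{\mathcal{F}}$ and $\rho_{2}$ from \eqref{5.1*}, and finally invoke Lemma \ref{l4.3} for $u$.

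One remark: Theorem \ref{t5.1} as stated does not assert uniqueness --- that is the content of Lemma \ref{l5.1} --- so your uniqueness discussion goes beyond what is strictly required here. Your argument there is nonetheless correct, and your route to $\rho_{2}=\bar\rho_{2}$ (simply evaluate \eqref{rho} at the common $t_{0}$ once $\nu_{2}=\bar\nu_{2}$ is known) is in fact a cleaner alternative to the paper's proof of Lemma \ref{l5.1}, which instead argues by contradiction that $\rho_{2}\neq\bar\rho_{2}$ would force $\D_{t}^{\nu_{2}}\psi\equiv0$, hence $\psi\equiv const.$, hence $\c_{0}=0$. The paper's version has the mild advantage of not relying on the extra hypothesis $\widetilde{\mathcal{F}}(t_{0})\neq0$, but since Lemma \ref{l5.1} is stated under the assumptions of Theorem \ref{t5.1} anyway, your shortcut is perfectly legitimate.
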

The next result deals with the unique solution of this IP.
\begin{lemma}\label{l5.1}
Let $(\nu_{1},\nu_{2},\rho_{2})$ be unknown parameters in
\eqref{2.2} with  $\rho_{2}$ being a  constant. Moreover, we assume
that assumptions of Theorem \ref{t5.1} hold. Then IP
\eqref{2.2}-\eqref{2.4} related with finding
$(\nu_{1},\nu_{2},\rho_{2}, u)$ by the measurement \eqref{2.4}
admits no more than one solution.
\end{lemma}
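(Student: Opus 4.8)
The plan is to argue by contradiction, extending the scheme of Lemma~\ref{l4.4} to the enlarged unknown set. Suppose $(\nu_1,\nu_2,\rho_2,u)$ and $(\bar\nu_1,\bar\nu_2,\bar\rho_2,\bar u)$ both solve \eqref{2.2}--\eqref{2.4} with the same right-hand sides, the same coefficients $\rho_1,\mathcal{L}_i,\mathcal{M}$, the same memory kernel $\mathcal{K}$ and the same observation $\psi$. First, since formula \eqref{3.2*} for the leading order involves only $\psi$, $u_0$ and $\rho_1$, none of which depends on $\rho_2$, the argument recalled from \cite[Lemma~4.2]{HPV} (already invoked in the proof of Lemma~\ref{l4.4}) yields $\nu_1=\bar\nu_1$.

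Next I would integrate \eqref{2.1} over $\Omega$ and use \eqref{2.4} exactly as in Section~\ref{s3.2}. For the I type FDO this produces the pair of identities
\[
\rho_2\,(\omega_{\nu_1-\nu_2}*\D_t^{\nu_1}\psi)(t)=\widetilde{\mathcal{F}}(t),\qquad
\bar\rho_2\,(\omega_{\nu_1-\bar\nu_2}*\D_t^{\nu_1}\psi)(t)=\widetilde{\mathcal{F}}(t),\qquad t\in[0,t^{*}],
\]
and the analogous pair with $\D_t^{\nu_1}(\rho_1\psi)$ in place of $\D_t^{\nu_1}\psi$ for the II type FDO, where $\widetilde{\mathcal{F}}$ is the function \eqref{5.7}. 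The crucial point is that $\widetilde{\mathcal{F}}$ is assembled solely from the common data $\psi,\rho_1,g,\mathcal{K},a_0,b_0$ and the already-identified $\nu_1$, so it is one and the same function in both identities. Condition \eqref{3.3} for each of the two representations is secured by $\c_0\neq0$ together with Remark~\ref{r4.1}, precisely as verified in Section~\ref{s5}; hence Proposition~\ref{p5.1} applies to each, giving
\[
\nu_1-\nu_2=\log_{\lambda}\Big|\lim_{t\to0}\frac{\widetilde{\mathcal{F}}(\lambda t)}{\widetilde{\mathcal{F}}(t)}\Big|=\nu_1-\bar\nu_2
\]
for every $\lambda\in(0,1)$, whence $\nu_2=\bar\nu_2$. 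Alternatively, one can mirror the $s_{n,\theta}$-convolution computation of Lemma~\ref{l4.4}: taking convolutions reduces the two identities to a relation of the type \eqref{3.16}, and passing to the limit $t\to0$ via the dominated convergence theorem again forces $\nu_2=\bar\nu_2$.

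With $\nu_1=\bar\nu_1$ and $\nu_2=\bar\nu_2$ in hand, formula \eqref{rho} — whose right-hand side depends only on $\nu_1,\nu_2$, the common data and a point $t_0$ with $\widetilde{\mathcal{F}}(t_0)\neq0$, whose existence is part of the hypotheses of Theorem~\ref{t5.1} — yields immediately $\rho_2=\bar\rho_2$. Thus the operator $\D_t$ in \eqref{2.2} is completely determined, so $u$ and $\bar u$ solve one and the same direct problem \eqref{2.2}--\eqref{2.3}, and Lemma~\ref{l4.3} gives $u=\bar u$, contradicting the assumed distinctness of the two solutions.

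I expect the main obstacle to lie in establishing $\nu_2=\bar\nu_2$: one must ensure that the non-degeneracy hypothesis \eqref{3.3} of Proposition~\ref{p5.1} holds simultaneously for both exponent choices $\nu_1-\nu_2$ and $\nu_1-\bar\nu_2$, and one must handle the limit $\lim_{t\to0}\widetilde{\mathcal{F}}(\lambda t)/\widetilde{\mathcal{F}}(t)$ carefully, since its very existence is a conclusion of Proposition~\ref{p5.1} rather than an assumption. The case split between the I and II type FDO — and, for the II type, the regularity of $\D_t^{\nu_1}(\rho_i\psi)$ needed there — must also be carried along as in Section~\ref{s5}, but this is routine given the lemmas already at our disposal.
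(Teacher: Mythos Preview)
Your argument is correct and follows the same overall contradiction scheme as the paper: first $\nu_1=\bar\nu_1$ via \eqref{3.2*}, then $\nu_2=\bar\nu_2$ via the representation with $\widetilde{\mathcal{F}}$ (which is independent of $\rho_2$), then $\rho_2=\bar\rho_2$, and finally $u=\bar u$ by Lemma~\ref{l4.3}.

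The one genuine difference is in the step $\rho_2=\bar\rho_2$. You invoke the explicit formula~\eqref{rho} at the point $t_0$ with $\widetilde{\mathcal{F}}(t_0)\neq0$, which is clean and immediate given that this hypothesis is part of Theorem~\ref{t5.1}. The paper instead argues directly from the pair of identities $\rho_2(\omega_{\nu_1-\nu_2}*\D_t^{\nu_1}\psi)=\bar\rho_2(\omega_{\nu_1-\nu_2}*\D_t^{\nu_1}\psi)$: subtracting gives $(\rho_2-\bar\rho_2)\D_t^{\nu_2}\psi\equiv0$ on $[0,t^*]$, so if $\rho_2\neq\bar\rho_2$ then $\D_t^{\nu_2}\psi\equiv0$, hence $\psi$ is constant, hence $\D_t\psi\equiv0$, contradicting $\c_0\neq0$. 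The paper's route thus uses only the weaker hypothesis $\c_0\neq0$ and never needs the point $t_0$; your route is shorter but consumes the extra assumption. Both are valid under the stated hypotheses.
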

\begin{proof}
 In virtue of Theorem \ref{t3.1}, we are left to show
impossibility two different constants $\rho_{2}$ and
$\bar{\rho}_{2}$ which provide a solvability of
\eqref{2.2}-\eqref{2.4} with the same given data. To this end, we
again exploit the argument by contradiction. For simplicity, we give
a detailed proof in the case of the I type FDO, the remaining case
is tackled in a similar manner.

Assuming that $\rho_{2}\neq\bar{\rho}_{2}$, then \eqref{5.6} leads
to the identities
\[
\rho_{2}(\omega_{\nu_{1}-\nu_{2}}*\D_{t}^{\nu_{1}}\psi)(t)=\rho_{1}(t)\D_{t}^{\nu_{1}}\psi-\c(t)=
\bar{\rho}_{2}(\omega_{\nu_{1}-\nu_{2}}*\D_{t}^{\nu_{1}}\psi)(t)
\]
for all $t\in[0,t^{*}]$, which in turn yield  the equality
\[
(\rho_{2}-\bar{\rho}_{2})(\omega_{\nu_{1}-\nu_{2}}*\D_{t}^{\nu_{1}}\psi)(t)=0.
\]
Since  $\rho_{2}\neq\bar{\rho}_{2}$ (by the assumption), the last
equality is fulfilled if only
\[
0=(\omega_{\nu_{1}-\nu_{2}}*\D_{t}^{\nu_{1}}\psi)(t)=\D_{t}^{\nu_{2}}\psi(t)
 \quad\text{for all}\quad t\in[0,t^{*}].
\]
To state the last equality we again apply Proposition \ref{p3.1}.
 Finally, appealing to the definition of the Caputo fractional
derivative, we end up with the identity
\[
\psi(t)= const.\quad\text{for all}\quad t\in[0,t^{*}],
\]
which immediately provides the vanishing $\D_{t}^{\nu_{1}}\psi$ for
all $t\in[0,t^{*})$ and, accordingly, $\mathbf{D}_{t}\psi\equiv 0$.
 Collecting the last identity with Lemma \ref{l4.3}, we
conclude that $ \c_{0}=0.$ However, this contradicts to the
assumption on the nonvanishing $\c_{0}$. This contradiction
completes the proof of the uniqueness $(\nu_{1},\nu_{2},\rho_{2},u)$
solving \eqref{2.2}-\eqref{2.4}.
\end{proof}

\textit{Results of Theorem \ref{t5.1} and Lemma \ref{l5.1} allow us
to establish the one-valued solvability of IP concerning with
looking for $(\nu_{1},\nu_{2},\rho_{2},u)$ by
\eqref{2.2}-\eqref{2.4}, if $\rho_2$ is unknown constant.}

Concerning the reconstruction of parameters
$(\nu_1,\nu_{i^{*}},\rho_{i^{*}}),$ $i^{*}\in\{2,3,...,M\}$ (i.e. in
the case of $\mathbf{D}_{t}$ having form \eqref{3.4*}), we set
\[
\widetilde{\mathcal{F}}_{1}(t)=\begin{cases}
\mathfrak{C}(t)-\sum\limits_{j=1,i^{*}\neq
j}^{M}\rho_{j}(t)\mathbf{D}_{t}^{\nu_{j}}\psi(t)\quad\text{in the
case of the I type FDO},
\\
\mathfrak{C}(t)-\sum\limits_{j=1,i^{*}\neq
j}^{M}\mathbf{D}_{t}^{\nu_{j}}(\rho_{j}(t)\psi(t))\quad\text{in the
case of the II type FDO},
\end{cases}
\]
 and recast the arguments leading to Theorem \ref{t5.1} and Lemma
 \ref{l5.1} where instead of Theorem \ref{t3.1}, we utilize Theorem
 \ref{t3.2}. Thus,  we end up with the claim.
 \begin{theorem}\label{t5.2}
Let $\nu_1,\nu_{i^{*}}$and $\rho_{i^{*}}\neq 0$ be unknown scalar
parameters in the fractional operator \eqref{3.4*} and let there
exist $t_0\in(0,t^{*}]$ such that
$\widetilde{\mathcal{F}}_{1}(t_0)\neq 0$. Then, under assumptions of
Theorem \ref{t3.2}, the inverse problem \eqref{2.1}--\eqref{3.4*}
admits a unique solution $(\nu_1,\nu_{i^{*}},\rho_{i^{*}},u)$ such
that $\nu_1$ and $\nu_{i^{*}}$ are computed via \eqref{3.2*} and
\eqref{3.3*} where $\mathcal{F}(t)$ is replaced by
$\widetilde{\mathcal{F}}_{1}(t)$, while $\rho_{i^{*}}$ is calculated
via \eqref{rho} with $\widetilde{\mathcal{F}}_{1}(t)$ and
$\nu_{i^{*}}$ in place of $\widetilde{\mathcal{F}}(t)$ and $\nu_2$.
Besides, the function
$u\in\C^{2+\alpha,\frac{2+\alpha}{2}\nu_1}(\bar{\Omega}_{T})$ is a
unique classical solution of the direct problem \eqref{2.1},
\eqref{2.3} and \eqref{3.4*} satisfying the observation \eqref{2.4}.
 \end{theorem}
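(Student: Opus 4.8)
The plan is to transcribe, for the $M$-term operator \eqref{3.4*}, the three-step argument of Section~\ref{s5} that gave Theorem~\ref{t5.1} and Lemma~\ref{l5.1}, using Theorem~\ref{t3.2} in place of Theorem~\ref{t3.1} and moving the contributions of the \emph{prescribed} orders $\nu_j$, $j\in\{2,\dots,M\}\setminus\{i^{*}\}$, into the free term. First, since neither formula \eqref{3.2*} nor the quantities $\c_0,\c(t)$ involve $\rho_{i^{*}}$, Theorem~\ref{t3.2} already recovers $\nu_1$ from $\psi$ by \eqref{3.2*} even when $\rho_{i^{*}}$ is unknown. Next, integrating \eqref{2.1} over $\Omega$ and using the observation \eqref{2.4} exactly as in the derivation of \eqref{3.10*} gives $\D_{t}\psi(t)=\c(t)$ on $[0,t^{*}]$; isolating the $i^{*}$-th term and using that $\rho_{i^{*}}$ is constant, this becomes, in both the I and the II type,
\[
\rho_{i^{*}}\,\D_{t}^{\nu_{i^{*}}}\psi(t)=\widetilde{\mathcal F}_{1}(t),\qquad t\in[0,t^{*}].
\]

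To bring this identity into the form \eqref{5.1} I would apply Proposition~\ref{p3.1} to write $\D_{t}^{\nu_{i^{*}}}\psi=\omega_{\nu_1-\nu_{i^{*}}}*\D_{t}^{\nu_1}\psi$, rewrite each $\D_{t}^{\nu_j}\psi$ ($j\ge2$) occurring in $\widetilde{\mathcal F}_{1}$ as $\omega_{\nu_1-\nu_j}*\D_{t}^{\nu_1}\psi$, and, in the II type, expand each $\D_{t}^{\nu_j}(\rho_j\psi)$ via the Leibniz-type formula \cite[Proposition~5.5]{SV}; by the $M$-term version of Lemma~\ref{l4.3} (contained in the proof of Theorem~\ref{t3.2}) one has $\D_{t}^{\nu_1}\psi\in\C^{\alpha\nu_1/2}([0,t^{*}])$ and $\D_{t}\psi|_{t=0}=\c_0$. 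Collecting terms, the identity takes the shape \eqref{5.1} with $b=\rho_{i^{*}}$, $\theta=\nu_1-\nu_{i^{*}}$, $v=\D_{t}^{\nu_1}\psi$ and $\mathcal F_0=\widetilde{\mathcal F}_1\in\C([0,t^{*}])$; moreover, as in Section~\ref{s3.2}, $\widetilde{\mathcal F}_1(0)=\c_0-\rho_1(0)\D_{t}^{\nu_1}\psi(0)$, and since $\D_{t}^{\nu_j}\psi(0)=0$ for $j\ge2$, evaluating $\D_{t}\psi|_{t=0}=\c_0$ forces $\rho_1(0)\D_{t}^{\nu_1}\psi(0)=\c_0$, so $\widetilde{\mathcal F}_1(0)=0$ while $v(0)=\c_0/\rho_1(0)\neq0$.

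With this structure the non-degeneracy hypothesis \eqref{3.3} (equivalently \eqref{5.0}) of Proposition~\ref{p5.1} is automatic: $\mathcal F_0(0)+b\,v(0)\,n^{-1}=\rho_{i^{*}}\c_0\big(\rho_1(0)\,n\big)^{-1}\neq0$ for every $n\in\mathbb N$, because $\c_0\neq0$, $\rho_{i^{*}}\neq0$, $\rho_1(0)>0$. Proposition~\ref{p5.1} then yields $\theta=\nu_1-\nu_{i^{*}}$ through \eqref{3.3*} with $\mathcal F=\widetilde{\mathcal F}_1$ and, using the assumption $\widetilde{\mathcal F}_1(t_0)\neq0$ (which is in fact automatic, since $\widetilde{\mathcal F}_1\equiv0$ would give $\D_{t}^{\nu_{i^{*}}}\psi\equiv0$, hence $\psi\equiv const$ and $\c_0=0$), the constant $\rho_{i^{*}}$ through \eqref{rho}. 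Finally, substituting the recovered $\nu_1,\nu_{i^{*}},\rho_{i^{*}}$ into \eqref{2.1}, \eqref{3.4*}, \eqref{2.3} and invoking the $M$-term solvability and regularity result (Lemma~\ref{l4.3} extended, cf.\ \cite[Theorem~4.1, Remark~4.4]{PSV}) furnishes the unique classical solution $u\in\C^{2+\alpha,\frac{2+\alpha}{2}\nu_1}(\bar{\Omega}_{T})$ satisfying \eqref{2.4}, which settles existence.

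For uniqueness, Theorem~\ref{t3.2} forces $\nu_1=\bar\nu_1$ for any two admissible solutions; as $\widetilde{\mathcal F}_1$ depends only on the data, on $\psi$ and on $\nu_1$ (not on $\nu_{i^{*}}$ or $\rho_{i^{*}}$), both solutions satisfy \eqref{5.1} with the same $\mathcal F_0=\widetilde{\mathcal F}_1$, so the limit formula \eqref{b.1} pins down $\nu_1-\nu_{i^{*}}$, whence $\nu_{i^{*}}=\bar\nu_{i^{*}}$ and then $\rho_{i^{*}}=\bar\rho_{i^{*}}$ by \eqref{rho}; alternatively, arguing as in Lemma~\ref{l5.1}, $\rho_{i^{*}}\neq\bar\rho_{i^{*}}$ would entail $\D_{t}^{\nu_{i^{*}}}\psi\equiv0$, hence $\psi\equiv const$, $\D_{t}\psi\equiv0$ and $\c_0=0$, a contradiction, and then Lemma~\ref{l4.3} gives $u=\bar u$ once $(\nu_1,\nu_{i^{*}},\rho_{i^{*}})$ are fixed. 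The only genuinely new input beyond Sections~\ref{s3.2} and \ref{s5} is the bookkeeping needed in the reduction step for the II type FDO: one must check that each of the $M-2$ extra terms $\D_{t}^{\nu_j}(\rho_j\psi)$, $j\neq1,i^{*}$, is continuous on $[0,t^{*}]$ and, for $j\ge2$, vanishes at $t=0$ in spite of the non-summable kernel $t^{-1-\nu_j}$ appearing in \cite[Proposition~5.5]{SV}. This is what legitimizes passing to the single scalar convolution equation \eqref{5.1} and makes \eqref{3.3} automatic; I expect it to be the main obstacle, but it is handled term by term exactly as the leading term in Section~\ref{s3.2}, via \cite[Proposition~5.5, Lemma~5.6]{SV} and the $t^{1-\nu_j}$-type bounds used there, so it is essentially routine.
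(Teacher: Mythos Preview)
Your proposal is correct and follows exactly the route the paper indicates: recast the arguments of Theorem~\ref{t5.1} and Lemma~\ref{l5.1} for the $M$-term operator, using Theorem~\ref{t3.2} in place of Theorem~\ref{t3.1}, reducing the integrated identity to the scalar equation \eqref{5.1} with $b=\rho_{i^{*}}$, $\theta=\nu_1-\nu_{i^{*}}$, $v=\D_t^{\nu_1}\psi$, $\mathcal{F}_0=\widetilde{\mathcal{F}}_1$, and then invoking Proposition~\ref{p5.1} and Lemma~\ref{l4.3}. Your observation that $\D_t^{\nu_j}\psi(0)=0$ for $j\ge2$ forces $\widetilde{\mathcal{F}}_1(0)=0$ and $v(0)=\c_0/\rho_1(0)\neq0$ (so that \eqref{5.0} holds for every $n$, and the hypothesis $\widetilde{\mathcal{F}}_1(t_0)\neq0$ is in fact automatic) is a sharpening that the paper does not make explicit but is entirely consistent with its argument.
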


\section{Influence of Noisy Data on Computation of Orders of
Fractional Derivatives}\label{s7}

\noindent Denoting the noisy measurement and the noise level by
$\psi_{\delta}(t)$ and $\delta,$ respectively, we assume that  the
following error bound
\begin{equation}\label{7.1}
|\psi(t)-\psi_{\delta}(t)|\leq \delta\g(t)
\end{equation}
holds for each $t\in[0,t^{*}]$. Here $\g=\g(t)$ is a nonnegative
function having the form
\begin{equation}\label{7.2}
\g(t)=\begin{cases} o(t^{\nu_{1}})\qquad\qquad
\qquad\qquad\quad\text{ the first-type noise
(FTN)},\\
O(t^{\nu_{1}})\qquad\qquad\qquad\qquad\quad\text{the second-type
noise
(STN)},\\
C_{1}+C_{2}t^{\nu_{1}}|\ln\, t|+C_{3}t^{\nu_{1}-\tilde{\nu}}\quad
\text{the third-type noise (TTN)}
\end{cases}
\end{equation}
with $C_{i}$ being nonnegative constants, $C_{2}+C_{3}>0,$
$\tilde{\nu}\in(0,1).$

It is worth noting that the selection of $\g$ is dictated with the
fact that the observation $\psi(t)$ has done only in the very small
neighborhood of $t=0$. We notice that the similar behavior of
$\g(t)$ is analyzed in our previous works
\cite{KPSV,KPSV2,PSV1,HPV}, where the reconstruction of some
parameters  (by a small-time measurement) in
 subdiffusion equations with the one- and multi-term fractional
differential operator $\D_{t}$ is discussed.

Requirements \eqref{7.1} and \eqref{7.2} tell us that
$\psi(0)=\psi_{\delta}(0)$ in the FTN and STN cases as well in the
TTN case this holds if only $C_{1}=0$. Besides, in the TTN case
there is the following asymptotic representation
\[
t^{-\nu_{1}}\g(t)\to+\infty\qquad\text{as}\qquad t\to 0.
\]
Finally, in the STN case, $\g(t)$ can be rewritten in more suitable
form to the further analysis
\[
\g(t)=C_{4}t^{\nu_{1}}+o(t^{\nu_{1}})
\]
with a positive constant $C_{4}$.

In this section, we aim to evaluate the differences
\[
\Delta_{1}=|\nu_{1}-\nu_{1,\delta}|\qquad\text{and}\qquad
\Delta_{2}=|\nu_{2}-\nu_{2,\delta}|,
\]
where parameters $\nu_{1,\delta}$ and $\nu_{2,\delta}$ are
reconstructed by $\psi_{\delta}$. We notice that the bound of
$\Delta_{i^{*}}=|\nu_{2}-\nu_{2,\delta}|$ (if $M>2$) is estimated
with the  arguments providing $\Delta_2$ and we leave it for
interested readers. Lastly, we mention that the assumptions h2, h5
along with Lemma \ref{l4.3} suggest that $\nu_{1,\delta}$ and
$\nu_{2,\delta}$ make sense only if $ \nu_{2}<\nu_{1,\delta}<1$ and
$ 0<\nu_{2,\delta}<\nu_{1}.
$

We notice that the bound of $\Delta_{1}$ is obtained in \cite[Lemma
6.1]{HPV} and, for the reader's convenience, we recall this claim
(rewritten in our notations) here. To this end,  assuming that
$\nu_{1,\delta}$ is computed via formula \eqref{3.2*} with
$\psi_{\delta}$ instead of $\psi$, that is
\begin{equation}\label{7.3}
\nu_{1,\delta}=\begin{cases} \underset{t\to
0}{\lim}\,\frac{\ln|\psi_{\delta}(t)-\int_{\Omega}u_{0}(x)dx|}{\ln\,
t}\qquad\qquad\qquad \text{in the case of the I type FDO,}\\
\underset{t\to
0}{\lim}\,\frac{\ln|\rho_{1}(t)\psi_{\delta}(t)-\rho_{1}(0)\int_{\Omega}u_{0}(x)dx|}{\ln\,
t}\qquad \text{in the case of the II type FDO,}\\
\end{cases}
\end{equation}
we establish.
\begin{lemma}\label{l7.1}
Let assumptions of Theorem \ref{t3.1} hold, and $\nu_{1,\delta}$ be
calculated via \eqref{7.3}. We require that \eqref{7.1} and
\eqref{7.2} are satisfied with $\delta,\tilde{\nu}\in(0,1),$ $C_1=0$
and the remaining $C_{i}$ being positive. Moreover, in the STN case,
we additionally assume that
\begin{equation}\label{7.4}
|\c_{0}|-\delta C_{4}\rho_{1}(0)\neq 0\qquad\text{and}\qquad
\frac{C_{4}\delta\rho_{1}(0)}{||\c_{0}|-\delta
C_{4}\rho_{1}(0)|}\neq 1.
\end{equation}
Then the following estimates hold
\[
\Delta_{1}=0\quad\text{in the FTN and STN cases and}\quad
\Delta_{1}\leq \tilde{\nu}\qquad\text{in the TTN case}.
\]
\end{lemma}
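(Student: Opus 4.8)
The statement concerns Lemma~\ref{l7.1}, which records the bound on $\Delta_1=|\nu_1-\nu_{1,\delta}|$ proved in \cite[Lemma 6.1]{HPV}. Since the paper explicitly says it recalls this claim for the reader's convenience, the natural route is to reconstruct the short argument underlying formula \eqref{7.3} and see how the three noise regimes in \eqref{7.2} feed into it. The plan is to treat the I type FDO (the II type being identical after replacing $\psi$ by $\rho_1\psi$ and $\int_\Omega u_0\,dx$ by $\rho_1(0)\int_\Omega u_0\,dx$, using h3 to control $\rho_1$). Write $\Phi(t)=\psi(t)-\int_\Omega u_0(x)\,dx$ and $\Phi_\delta(t)=\psi_\delta(t)-\int_\Omega u_0(x)\,dx$; by Lemma~\ref{l4.3} one has $\mathbf{D}_t\psi|_{t=0}=\c_0\neq 0$, and combining this with the asymptotic representation of Lemma~\ref{l4.2} (first identity, with $v=\psi$, $r_i=\rho_i$, $\mu_1=\nu_1$) yields the sharp asymptotics $\Phi(t)=\c_0\,\dfrac{t^{\nu_1}}{\Gamma(1+\nu_1)}(1+o(1))$ as $t\to 0$, so that indeed $\nu_1=\lim_{t\to0}\ln|\Phi(t)|/\ln t$, recovering \eqref{3.2*}.

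\textbf{Key steps.} First I would record the exact leading-order behavior $|\Phi(t)|\sim c\,t^{\nu_1}$ with $c=|\c_0|/\Gamma(1+\nu_1)>0$. Then, using \eqref{7.1}--\eqref{7.2}, estimate $\Phi_\delta$ from above and below by triangle inequality: $|\Phi(t)|-\delta\g(t)\le|\Phi_\delta(t)|\le|\Phi(t)|+\delta\g(t)$. In the FTN case $\g(t)=o(t^{\nu_1})$, so $\delta\g(t)$ is negligible against $|\Phi(t)|\sim c\,t^{\nu_1}$; hence $|\Phi_\delta(t)|=c\,t^{\nu_1}(1+o(1))$ and $\ln|\Phi_\delta(t)|/\ln t\to\nu_1$, giving $\Delta_1=0$. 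In the STN case $\g(t)=C_4 t^{\nu_1}+o(t^{\nu_1})$, so $|\Phi_\delta(t)|$ behaves like $|\c_0|/\Gamma(1+\nu_1)\pm\delta C_4\rho_1(0)$ times $t^{\nu_1}(1+o(1))$ — here the two conditions in \eqref{7.4} guarantee that the effective leading coefficient neither vanishes (first condition, so the $\ln|\cdot|$ is well-defined near $t=0$) nor forces a spurious cancellation at the next order (second condition, excluding the degenerate case where the $t^{\nu_1}$ coefficient is exactly balanced, which would change the power); thus again the $t^{\nu_1}$ power survives and $\Delta_1=0$. In the TTN case $\g(t)=C_1+C_2 t^{\nu_1}|\ln t|+C_3 t^{\nu_1-\tilde\nu}$ with $C_1=0$; the dominant perturbation term is $\delta C_3 t^{\nu_1-\tilde\nu}$, which \emph{beats} $t^{\nu_1}$ as $t\to0$. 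So $|\Phi_\delta(t)|$ is controlled between constants times $t^{\nu_1-\tilde\nu}$ from above and (once $t$ is small) $t^{\nu_1-\tilde\nu}$ from below as well, because the noise swamps the signal; hence $\ln|\Phi_\delta(t)|/\ln t\to\nu_1-\tilde\nu$, whence $\Delta_1=\tilde\nu$ — but one must be careful: near the crossover the lower bound $|\Phi(t)|-\delta\g(t)$ can be negative, so the clean two-sided estimate only holds in the regime where the noise dominates; in the complementary regime the signal dominates and one gets the opposite inequality, and the limit superior/inferior analysis shows $\Delta_1\le\tilde\nu$ rather than equality, which is exactly what the lemma claims.

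\textbf{Main obstacle.} The delicate point is the TTN analysis: $|\Phi_\delta(t)|$ is a difference of two positive quantities of \emph{different} orders ($t^{\nu_1}$ and $t^{\nu_1-\tilde\nu}|\delta C_3|$), and although for $t$ small enough the larger $t^{\nu_1-\tilde\nu}$ term wins, one cannot simply assert two-sided bounds uniformly — one must argue via $\limsup$ and $\liminf$ of $\ln|\Phi_\delta(t)|/\ln t$ and show both are pinched to (or above) $\nu_1-\tilde\nu$, using that $\ln t<0$ near $0$ so inequalities on $|\Phi_\delta|$ reverse when divided by $\ln t$. A secondary subtlety is verifying that the $o(1)$ and $O(1)$ remainder terms from Lemma~\ref{l4.2} are genuinely subordinate; this uses the regularity $\mathbf{D}_t^{\nu_1}\psi\in\C^{\alpha\nu_1/2}([0,t^*])$ from Lemma~\ref{l4.3} together with assumption h3 on $\rho_1\in\C^\nu([0,T])$, so that $J_{\nu_1}(\psi,t)=o(t)$ and the correction $r_2(0)t^{\nu_1-\nu_2}$ in Lemma~\ref{l4.2} is lower order because $\nu_2<\nu_1$. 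Once these asymptotics are in hand, each of the three cases is a routine comparison, and the lemma follows; I would simply cite \cite[Lemma 6.1]{HPV} for the full details after sketching this reduction.
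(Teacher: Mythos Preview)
Your proposal is correct and aligns with the paper's treatment: the paper does not prove Lemma~\ref{l7.1} but simply recalls it from \cite[Lemma~6.1]{HPV}, and your sketch of the underlying argument (sharp small-time asymptotics of $\psi(t)-\psi(0)$ via Lemmas~\ref{l4.2}--\ref{l4.3}, followed by a triangle-inequality comparison with the noise profile $\g$ in each of the three regimes) is exactly what that external reference carries out. Your closing remark that you would cite \cite[Lemma~6.1]{HPV} for the full details is precisely the paper's own strategy.
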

\begin{remark}\label{r7.0}
It is apparent that a sufficient condition providing the fulfillment
of \eqref{7.4} is the inequality
\[
\frac{C_{4}\delta\rho_{1}(0)}{||\c_{0}|-\delta C_{4}\rho_{1}(0)|}<
1.
\]
\end{remark}
As for estimating $\Delta_{2},$ we notice that (see the technique to
the reconstruction of $\nu_{2}$ in Section \ref{s3})
$\nu_{2,\delta}$ will be dependent not only $\psi_{\delta}$ but also
its corresponding fractional derivatives. This fact dictates the
necessity of the additional requirements on the noisy measurement,
which read as
\begin{equation}\label{7.5}
\psi_{\delta},\,\D_{t}^{\nu_{1}}\psi_{\delta},\,
\D_{t}^{\nu_{2},\delta}\psi_{\delta}\in\C([0,t^{*}]).
\end{equation}
In conclusion, denoting the fractional operator \eqref{2.2} with
$\nu_{2,\delta}$ in place $\nu_{2}$ by $\D_{t,\delta}$ and computing
the left-hand side of
 \eqref{3.10*} (with $\D_{t,\delta}$ instead of $\D_{t}$) on
 $\psi_{\delta}$, we arrive at the equality
\begin{equation}\label{7.6}
\D_{t,\delta}\psi_{\delta}(t)-a_{0}(t)\psi_{\delta}(t)-(\mathcal{K}*b_{0}\psi_{\delta})(t)=F_{\delta}(t)
\end{equation}
for each $t\in[0,t^{*}]$. In these calculations, we used assumption
\eqref{7.5}. As for the function $F_{\delta}(t)$, it has a sense of
the right-hand side  in \eqref{3.10*}.

\noindent After that, setting
\begin{align*}
F(t)&=\int_{\Omega}g(x,t)dx-d(\mathcal{K}*\mathcal{I})(t)-\mathcal{I}(t),\\
\Psi_{\delta}(t)&=\psi(t)-\psi_{\delta}(t)\qquad\text{and}\qquad\Phi_{\delta}(t)=F_{\delta}(t)-F(t),
\end{align*}
and subtracting \eqref{3.10*} from \eqref{7.6}, we arrive at the
following relations for each $t\in[0,t^{*}]:$
\begin{equation}\label{7.7}
\rho_{2}(t)[\D_{t}^{\nu_{2}}\psi(t)-\D_{t}^{\nu_{2,\delta}}\psi_{\delta}(t)]=\Phi_{\delta}(t)-(\mathcal{K}*b_{0}\Psi_{\delta})(t)
-a_{0}(t)\Psi_{\delta}(t)+\rho_{1}(t)\D_{t}^{\nu_{1}}\Psi_{\delta}(t)
\end{equation}
in the case of the I type FDO, and
\begin{equation}\label{7.8}
[\D_{t}^{\nu_{2}}(\rho_{2}\psi)(t)-\D_{t}^{\nu_{2,\delta}}(\rho_{2}\psi_{\delta})(t)]=\Phi_{\delta}(t)-(\mathcal{K}*b_{0}\Psi_{\delta})(t)
-a_{0}(t)\Psi_{\delta}(t)+\D_{t}^{\nu_{1}}(\rho_{1}\Psi_{\delta})(t)
\end{equation}
in the case of the II type FDO.

At this point, bearing in mind the last equalities, we evaluate
$\Delta_{2}$ in the case of the I and the II type FDO, separately.
Further in this and next sections, we denote $x^{*}$ the minimal
point of $\Gamma-$function if $x\geq 0$, i.e. $
\Gamma(1+x^{*})=\underset{x\geq 0}{\min}\Gamma(x),$ $x^{*}\approx
0.4616.$

\subsection{The bound of $\Delta_{2}$ in the case of the I type
FDO}\label{s7.1}

First, the straightforward calculations provide the following
properties of the function $\Psi_{\delta}$.
\begin{corollary}\label{c7.1}
Let $\theta\in(0,1),$ $\mathcal{K}\in L_{1}(0,T)$ and
$\mathcal{R}=\mathcal{R}(t)\in\C^{1}([0,t^{*}])$. We assume that
$\g(t)$ given by \eqref{7.2} is continuous in $[0,t^{*}]$. Then,
under  \eqref{7.1} and \eqref{7.5}, the following inequalities hold
for each $t\in[0,t^{*}]:$

\noindent(i)$$ |(\omega_{\theta}*\mathcal{R}\Psi_{\delta})(t)|\leq
\delta\|\mathcal{R}\|_{\C([0,t])}
\begin{cases}
\frac{C_{\psi}\Gamma(1+\nu_{1})t^{\theta+\nu_{1}}}{\Gamma(1+\theta+\nu_{1})}\qquad\qquad\qquad\qquad\quad\text{
in
the FTN and STN cases,}\\
\frac{C_{1}t^{\theta}}{\Gamma(1+\theta)}+\frac{(C_{2}+C_{3})\Gamma(1+\nu_{1}-\tilde{\nu})t^{\theta+\nu_{1}-\tilde{\nu}}}{\Gamma(1+\theta+\nu_{1}-\tilde{\nu})}
\qquad\text{in the TTN case},
\end{cases}
$$
where the positive constant $C_{\psi}$ is greater than $C_{4}$;

\noindent(ii) $$ \int_{0}^{t}|\Psi_{\delta}(\tau)|d\tau\leq\delta
\begin{cases}
\frac{C_{\psi}t^{1+\nu_{1}}}{1+\nu_{1}}\qquad\qquad\qquad\qquad\text{in
the FTN
and STN cases,}\\
C_{1}t+\frac{C_{2}+C_{3}}{1+\nu_{1}-\tilde{\nu}}t^{\nu_{1}-\tilde{\nu}+1}\qquad\text{in
the TTN case;}
\end{cases}
$$

\noindent(iii)$$ \|\omega_{\theta}*\Psi_{\delta}\|_{L_{1}(0,t)}\leq
t\delta\begin{cases}
\frac{C_{\psi}\Gamma(1+\nu_{1})t^{\theta+\nu_{1}}}{\Gamma(2+\theta+\nu_{1})}\qquad\qquad\qquad\qquad\qquad\text{
in
the FTN and STN cases,}\\
\frac{C_{1}t^{\theta}}{\Gamma(2+\theta)}+\frac{(C_{2}+C_{3})t^{\theta+\nu_{1}-\tilde{\nu}}\Gamma(1+\nu_{1}-\tilde{\nu})}
{\Gamma(2+\nu_{1}-\tilde{\nu}+\theta)} \qquad \text{in the TTN
case};
\end{cases}
$$

\noindent(iv)$$ \|\mathcal{K}*\Psi_{\delta}\|_{L_{1}(0,t)}\leq
\delta\|\mathcal{K}\|_{L_{1}(0,t)}\begin{cases}
\frac{C_{\psi}t^{1+\nu_{1}}}{1+\nu_{1}}\qquad\qquad\qquad\text{ in
the FTN and STN cases,}\\
C_{1}t+\frac{C_{2}+C_{3}}{1+\nu_{1}-\tilde{\nu}}t^{\nu_{1}-\tilde{\nu}+1}\quad
\text{in the TTN case};
\end{cases}
$$

\noindent(v)\begin{align*}
\bigg|\int_{0}^{t}\mathcal{R}(\tau)\D_{\tau}^{\nu_{1}}\Psi_{\delta}(\tau)d\tau\bigg|&\leq\delta\|\mathcal{R}\|_{\C^{1}([0,t])}\\
& \times
\begin{cases}
C_{\psi}t\Gamma(1+\nu_{1})\big[1+\frac{t}{2}\big]\qquad\qquad\qquad\qquad\text{
in
the FTN and STN cases,}\\
\\
\frac{C_{1}t^{1-\nu_{1}}[1+\frac{t}{2-\nu_{1}}]}{\Gamma(2-\nu_{1})}
+
\frac{(C_{2}+C_{3})t^{1-\tilde{\nu}}[1+\frac{t}{2-\tilde{\nu}}]\Gamma(1+\nu_{1}-\tilde{\nu})}
{\Gamma(2-\tilde{\nu})} \qquad \text{in the TTN case}.
\end{cases}
\end{align*}
\end{corollary}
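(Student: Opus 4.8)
The strategy is to reduce all five bounds to the single pointwise estimate $|\Psi_{\delta}(t)|\le\delta\g(t)$ coming from \eqref{7.1}, together with the fact that \eqref{7.1}--\eqref{7.2} force $\Psi_{\delta}(0)=0$. The first step is to replace the three expressions for $\g$ in \eqref{7.2} by clean monomial envelopes valid on $[0,t^{*}]$: in the FTN and STN cases one absorbs the $o(t^{\nu_{1}})$ term into a single constant $C_{\psi}>C_{4}$, obtaining $\g(t)\le C_{\psi}t^{\nu_{1}}$; in the TTN case one uses the elementary estimate $|\ln t|\le C_{\tilde\nu}\,t^{-\tilde\nu}$ on $(0,t^{*}]$ to dominate $C_{2}t^{\nu_{1}}|\ln t|$ by a multiple of $t^{\nu_{1}-\tilde\nu}$, so that (after the corresponding renaming of constants) $\g(t)\le C_{1}+(C_{2}+C_{3})t^{\nu_{1}-\tilde\nu}$. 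All five estimates then follow by feeding these envelopes through elementary integrals.

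Concretely, for (i) I would write $(\omega_{\theta}*\mathcal{R}\Psi_{\delta})(t)=\tfrac{1}{\Gamma(\theta)}\int_{0}^{t}(t-s)^{\theta-1}\mathcal{R}(s)\Psi_{\delta}(s)\,ds$, pull out $\|\mathcal{R}\|_{\C([0,t])}$ and $\delta$, and evaluate the resulting kernel against the envelope by means of the Beta integral $\int_{0}^{t}(t-s)^{\theta-1}s^{\gamma}\,ds=\tfrac{\Gamma(\theta)\Gamma(1+\gamma)}{\Gamma(1+\theta+\gamma)}\,t^{\theta+\gamma}$, with $\gamma=\nu_{1}$ in the FTN/STN cases and $\gamma\in\{0,\nu_{1}-\tilde\nu\}$ in the TTN case; the $\Gamma(\theta)$ produced cancels the $1/\Gamma(\theta)$ in $\omega_{\theta}$, which is exactly why the claimed constants appear. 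Estimate (ii) is just $\int_{0}^{t}|\Psi_{\delta}|\le\delta\int_{0}^{t}\g$, a direct integration of the envelope. Estimate (iii) is obtained by integrating the pointwise bound from (i) (taken with $\mathcal{R}\equiv1$) over $(0,t)$, using $\int_{0}^{t}\tau^{a}\,d\tau=t^{a+1}/(a+1)$ and $(a+1)\Gamma(a+1)=\Gamma(a+2)$ to recover the $\Gamma(2+\cdots)$ denominators. Estimate (iv) follows from Young's inequality for one‑sided convolutions, $\|\mathcal{K}*\Psi_{\delta}\|_{L_{1}(0,t)}\le\|\mathcal{K}\|_{L_{1}(0,t)}\,\|\Psi_{\delta}\|_{L_{1}(0,t)}$, combined with (ii).

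The one genuinely different ingredient is (v), which I expect to be the main obstacle because it involves the fractional derivative $\D_{\tau}^{\nu_{1}}\Psi_{\delta}$ rather than $\Psi_{\delta}$ itself. Here I would use $\Psi_{\delta}(0)=0$ to write $\D_{\tau}^{\nu_{1}}\Psi_{\delta}(\tau)=\tfrac{d}{d\tau}(\omega_{1-\nu_{1}}*\Psi_{\delta})(\tau)$, integrate by parts in $\int_{0}^{t}\mathcal{R}(\tau)\D_{\tau}^{\nu_{1}}\Psi_{\delta}(\tau)\,d\tau$, observe that the boundary term at $\tau=0$ vanishes since $(\omega_{1-\nu_{1}}*\Psi_{\delta})(0)=0$, and thus reduce the left‑hand side to $\mathcal{R}(t)(\omega_{1-\nu_{1}}*\Psi_{\delta})(t)-\int_{0}^{t}\mathcal{R}'(\tau)(\omega_{1-\nu_{1}}*\Psi_{\delta})(\tau)\,d\tau$. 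Both pieces are now controlled by applying (i) with $\theta=1-\nu_{1}$ — pointwise at $\tau=t$ for the first, and after one additional integration over $(0,t)$ for the second — and collecting the exponents ($1-\nu_{1}+\nu_{1}=1$ in the FTN/STN case, $1-\nu_{1}+\nu_{1}-\tilde\nu=1-\tilde\nu$ in the TTN case) gives precisely the stated bound. Throughout, assumption \eqref{7.5} is what guarantees that the fractional derivatives entering these manipulations are continuous on $[0,t^{*}]$, so that the integrations by parts and the applications of the Beta integral are legitimate.
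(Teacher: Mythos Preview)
Your proposal is correct and is precisely the ``straightforward calculations'' the paper invokes (the paper gives no detailed argument for Corollary~\ref{c7.1}). One small caveat: your blanket assertion that \eqref{7.1}--\eqref{7.2} force $\Psi_{\delta}(0)=0$ holds only in the FTN/STN cases and in the TTN case with $C_{1}=0$; when $C_{1}>0$ the Caputo derivative in (v) already involves $\omega_{1-\nu_{1}}*[\Psi_{\delta}-\Psi_{\delta}(0)]$, so the integration by parts goes through unchanged and the extra piece $|\Psi_{\delta}(0)|\,t^{1-\nu_{1}}/\Gamma(2-\nu_{1})\le \delta C_{1}t^{1-\nu_{1}}/\Gamma(2-\nu_{1})$ is absorbed into the $C_{1}$ term of the stated bound.
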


Next, we introduce the function
\begin{align*}
\c_{1}(t)&=\Big(|\c_{0}|-\frac{t^{\frac{\nu_{1}\alpha}{2}}[\rho_{1}(0)\langle\D_{t}^{\nu_{1}}\psi\rangle_{t,[0,t^{*}]}^{(\nu_{1}\alpha/2)}
+
|\rho_{2}(0)|\langle\D_{t}^{\nu_{2}}\psi\rangle_{t,[0,t^{*}]}^{(\nu_{1}\alpha/2)}
]}{\Gamma(1+x^{*})}
\Big)\\
& \times
(\rho_{1}(0)\Gamma(1+\nu_{1})+|\rho_{2}(0)|t^{\frac{2}{\alpha\nu_1}})^{-1},
\end{align*}
and define the positive magnitudes:
\[
\underline{\rho_{2}}=\underset{[0,t^{*}]}{\min}|\rho_{2}(t)|,\quad
\underline{\nu_{2}}=\min\{\nu_{2},\nu_{2,\delta}\},
\]
and
\begin{align*}
t_{1}&=(|\c_{0}|\Gamma(1+x^{*}))^{\frac{2}{\nu_{1}\alpha}}
[\rho_{1}(0)\langle\D_{t}^{\nu_{1}}\psi\rangle_{t,[0,t^{*}]}^{(\nu_{1}\alpha/2)}+
|\rho_{2}(0)|\langle\D_{t}^{\nu_{2}}\psi\rangle_{t,[0,t^{*}]}^{(\nu_{1}\alpha/2)}]^{-\frac{2}{\alpha\nu_{1}}};\\
t_{2}&=\min\Big\{t^{*},t_{1},\exp\Big\{-\gamma-\sum_{n=1}^{\infty}\tfrac{\nu_{1}}{n(n-\nu_{1})}\Big\},
\Big(\frac{\rho_{1}(0)\Gamma(1+\nu_{1})}{|\rho_2(0)|}\Big)^{\frac{2}{\alpha\nu_{1}}}\Big\},
\end{align*}
where $\gamma\approx 0.577$ is the Euler-Mascheroni constant.
\begin{remark}\label{r7.1}
The straightforward calculations ensure the positivity of
$\c_{1}(t)$ if only $\c_{0}\neq 0$ and $t\in[0,t_{2})$.
\end{remark}
\begin{lemma}\label{l7.2}
Let assumptions of Theorem \ref{t3.1} hold. We assume that
$\psi_{\delta}$ satisfies \eqref{7.1}, \eqref{7.2} and \eqref{7.5},
and the function $F_{\delta}(t)$ is bounded for each
$t\in[0,t^{*}]$. If  $\D_{t}$ is the I type FDO, then the following
estimate holds for each $t\in(0,t_{2}],$
\[
\Delta_{2}\leq2\underset{t\in(0,t_{2})}{\inf}
\frac{t^{\underline{\nu_{2}}-1-\nu_{1}}\Gamma(2+\alpha\nu_1/2)\Phi_{1,\delta}(t)}
{\c_{1}(t)[|\ln\,t|-\gamma-\sum_{n=1}^{\infty}\tfrac{\nu_{1}}{n(n-\nu_{1})}]\Gamma(1+x^{*})},
\]
where
\begin{align*}
\Phi_{1,\delta}(t)&=t\tfrac{\underset{\tau\in[0,t]}{\sup}|\Phi_{\delta}(\tau)|}{\underline{\rho_{2}}}+t
\delta
C_{\psi}\bigg[\frac{t^{\nu_{1}}\|a_{0}\|_{\C([0,t])}}{(1+\nu_{1})\underline{\rho_{2}}}
+\Gamma(1+\nu_{1}) +
\frac{\|\mathcal{K}\|_{L_{1}(0,t)}t^{\nu_{1}}\|b_{0}\|_{\C([0,t])}}{(1+\nu_{1})\underline{\rho_{2}}}
\\
& +
\Big\|\frac{\rho_{1}}{\rho_{2}}\Big\|_{\C^{1}([0,t])}\Big(1+\frac{t}{2}\Big)\Gamma(1+\nu_{1})
\bigg]
\end{align*}
in the case of FTN or STN, while in the TTN case
\begin{align*}
\Phi_{1,\delta}(t)&=t\tfrac{\underset{\tau\in[0,t]}{\sup}|\Phi_{\delta}(\tau)|}{\underline{\rho_{2}}}
+t^{1-\max\{\nu_{1},\tilde{\nu}\}} \delta
\bigg[\frac{t^{\max\{\nu_{1},\tilde{\nu}\}}\|a_{0}\|_{\C([0,t])}}{\underline{\rho_{2}}}
\Big(C_{1}+\frac{C_{2}+C_{3}}{1+\nu_{1}-\tilde{\nu}}t^{\nu_{1}-\tilde{\nu}}\Big)
\\
& +\Big \|\frac{\rho_{1}}{\rho_{2}}\Big\|_{\C^{1}([0,t])}
t^{\max\{\nu_{1},\tilde{\nu}\}-\nu_{1}}
\Big(\frac{C_{1}}{\Gamma(2-\nu_{1})}\Big[1+\frac{t}{2-\nu_{1}}\Big]
+\frac{(C_{2}+C_{3})\Gamma(1+\nu_{1}-\tilde{\nu})}{\Gamma(2-\tilde{\nu})}t^{\nu_{1}-\tilde{\nu}}\Big[1+\frac{t}{2-\tilde{\nu}}\Big]\Big)\\
&+\frac{2C_{1}t^{\max\{\nu_{1},\tilde{\nu}\}-\nu_{1}}}{\Gamma(2-\nu_{1})}
+
\frac{(C_{2}+C_{3})\Gamma(1+\nu_{1}-\tilde{\nu})}{\Gamma(2-\tilde{\nu})}t^{\max\{\nu_{1},\tilde{\nu}\}-\tilde{\nu}}
\\
&
+\frac{\|\mathcal{K}\|_{L_{1}(0,t)}t^{\max\{\nu_{1},\tilde{\nu}\}}\|b_{0}\|_{\C([0,t])}}{\underline{\rho_{2}}}
\Big( C_{1}+\frac{C_{2}+C_{3}}{1+\nu_{1}-\tilde{\nu}}
t^{\nu_{1}-\tilde{\nu}} \Big) \bigg].
\end{align*}
\end{lemma}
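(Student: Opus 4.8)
The plan is to trap $\Delta_{2}$ between a lower and an upper estimate for the averaged discrepancy
\[
\mathcal{J}(t):=\Big|\int_{0}^{t}\big[\D_{\tau}^{\nu_{2}}\psi(\tau)-\D_{\tau}^{\nu_{2,\delta}}\psi(\tau)\big]\,d\tau\Big|,\qquad t\in(0,t_{2}],
\]
the upper one following from \eqref{7.7} and Corollary \ref{c7.1}, the lower one — the genuinely hard step — from Lemma \ref{l4.2} together with a mean value estimate in the \emph{order} of differentiation.

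\textbf{Upper bound.} Dividing \eqref{7.7} by $\rho_{2}$ (whence $|\rho_{2}(\tau)|^{-1}\le\underline{\rho_{2}}^{-1}$) and integrating over $(0,t)$, I would write $\D_{\tau}^{\nu_{2}}\psi-\D_{\tau}^{\nu_{2,\delta}}\psi_{\delta}=(\D_{\tau}^{\nu_{2}}\psi-\D_{\tau}^{\nu_{2,\delta}}\psi)-\D_{\tau}^{\nu_{2,\delta}}\Psi_{\delta}$ and estimate the terms coming from the right-hand side of \eqref{7.7}: the $\Phi_{\delta}$--term by $t\sup_{[0,t]}|\Phi_{\delta}|$; the $a_{0}\Psi_{\delta}$-- and $\mathcal{K}*b_{0}\Psi_{\delta}$--terms by Corollary \ref{c7.1}(ii),(iv); the $\tfrac{\rho_{1}}{\rho_{2}}\D_{\tau}^{\nu_{1}}\Psi_{\delta}$--term by Corollary \ref{c7.1}(v) with $\mathcal{R}=\rho_{1}/\rho_{2}$; and $\int_{0}^{t}\D_{\tau}^{\nu_{2,\delta}}\Psi_{\delta}\,d\tau=(\omega_{1-\nu_{2,\delta}}*\Psi_{\delta})(t)$ (using $\Psi_{\delta}(0)=0$) directly from \eqref{7.1} via a Beta integral. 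Each summand produced in this way is precisely one of those collected in $\Phi_{1,\delta}(t)$ — in particular the term $t\delta C_{\psi}\Gamma(1+\nu_{1})$ is the $\D^{\nu_{2,\delta}}\Psi_{\delta}$ contribution — so that $\mathcal{J}(t)\le\Phi_{1,\delta}(t)$; the TTN case is identical, with the third line of \eqref{7.2} and the TTN parts of Corollary \ref{c7.1}.

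\textbf{Lower bound.} Since $\D_{\tau}^{\mu}\psi(\tau)=\tfrac{d}{d\tau}(\omega_{1-\mu}*[\psi-\psi(0)])(\tau)$, integrating gives
\[
\int_{0}^{t}\big[\D_{\tau}^{\nu_{2}}\psi-\D_{\tau}^{\nu_{2,\delta}}\psi\big]\,d\tau=\int_{0}^{t}\big[\omega_{1-\nu_{2}}(t-s)-\omega_{1-\nu_{2,\delta}}(t-s)\big]\,[\psi(s)-\psi(0)]\,ds.
\]
Applying the mean value theorem in the order to $\mu\mapsto\omega_{1-\mu}(u)=u^{-\mu}/\Gamma(1-\mu)$, one gets for each $u$ that $\omega_{1-\nu_{2}}(u)-\omega_{1-\nu_{2,\delta}}(u)=(\nu_{2,\delta}-\nu_{2})\,\frac{u^{-\mu(u)}}{\Gamma(1-\mu(u))}\,[\digamma(1-\mu(u))-\ln u]$ with $\mu(u)$ strictly between $\nu_{2}$ and $\nu_{2,\delta}$ and $\digamma=\Gamma'/\Gamma$ the digamma function; monotonicity of $\digamma$ and $1-\mu(u)>1-\nu_{1}$ give $\digamma(1-\mu(u))\ge\digamma(1-\nu_{1})=-\gamma-\sum_{n\ge1}\tfrac{\nu_{1}}{n(n-\nu_{1})}$, which supplies the bracket $|\ln t|-\gamma-\sum_{n\ge1}\tfrac{\nu_{1}}{n(n-\nu_{1})}$ in the claim and forces the restriction $t<\exp\{-\gamma-\sum_{n\ge1}\tfrac{\nu_{1}}{n(n-\nu_{1})}\}$ (one of the entries of $t_{2}$) for this factor to be positive. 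For $[\psi(s)-\psi(0)]$ I would invoke Lemma \ref{l4.2} with $v=\psi$, for which $\D_{t}^{I}\psi$ at $t=0$ equals $\c_{0}$: it yields $|\psi(s)-\psi(0)|\ge\c_{1}(s)\,s^{\nu_{1}}\ge\c_{1}(t)\,s^{\nu_{1}}$ for $s\le t$, where the hypothesis $\c_{0}\ne0$ is exactly what makes $\c_{1}(t)$ positive on $[0,t_{2})$ (Remark \ref{r7.1}), and the threshold $t_{1}$ is precisely the condition keeping the remainders $J_{\nu_{1}}(\psi,s)$ and $s^{\nu_{1}-\nu_{2}}J_{\nu_{2}}(\psi,s)$ of Lemma \ref{l4.2} dominated by the $\c_{0}$--term. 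Evaluating the remaining Beta integrals (with $1/\Gamma(\cdot)\le1/\Gamma(1+x^{*})$ and $u^{-\mu(u)}\ge u^{-\underline{\nu_{2}}}$) and absorbing the contributions of $\rho_{2}(\tau)-\rho_{2}(0)$ and of the Hölder remainder of $\psi-\psi(0)$ inside the convolution — smaller by a further factor $t$, respectively $t^{\nu_{1}\alpha/2}$ — one arrives at
\[
\mathcal{J}(t)\ge\frac{\Delta_{2}\,\c_{1}(t)\,\Gamma(1+x^{*})}{2\,\Gamma(2+\alpha\nu_{1}/2)}\,t^{1+\nu_{1}-\underline{\nu_{2}}}\Big[|\ln t|-\gamma-\sum_{n\ge1}\tfrac{\nu_{1}}{n(n-\nu_{1})}\Big].
\]

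Combining the two estimates and dividing produces the stated inequality for each $t\in(0,t_{2}]$, and taking the infimum over $t$ sharpens it; the II type FDO is handled along the same lines, using Proposition \ref{p3.1} together with \cite[Proposition 5.5]{SV} to treat $\D_{\tau}^{\nu_{1}}(\rho_{1}\psi)$ and $\D_{\tau}^{\nu_{1}}(\rho_{2}\psi)$ exactly as in Section \ref{s3.2}. The principal obstacle is the lower bound: one must extract a clean factor $\Delta_{2}|\ln t|$ from a difference of two weakly singular fractional integrals of nearly equal order while keeping every remainder term quantitatively under control, since that difference, though of size $O(\Delta_{2})$, exceeds its own error terms only by the slowly growing factor $|\ln t|$.
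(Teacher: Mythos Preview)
Your approach is essentially the same as the paper's: both integrate \eqref{7.7} (after dividing by $\rho_{2}$) to obtain the identity $([\omega_{1-\nu_{2}}-\omega_{1-\nu_{2,\delta}}]*[\psi-\psi(0)])(t)=\Phi_{2,\delta}(t)$, then apply the mean value theorem in the order variable together with Lemma~\ref{l4.2} for the lower bound and Corollary~\ref{c7.1} for the upper bound on $\Phi_{2,\delta}$. The only cosmetic difference is that you phrase this as separate upper/lower estimates on $\mathcal{J}(t)$, whereas the paper writes the equality \eqref{7.9} first and then bounds its left-hand side from below; the ingredients and the order in which they are deployed are identical.
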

\begin{remark}\label{r7.2}
If the following inequalities hold
$$
\underset{\tau\in[0,t]}{\sup}|\Phi_{\delta}(\tau)|=O(\delta)\quad\text{and}\quad
0<\delta^{\alpha^{*}}<t_{2}
$$
with some $\alpha^{*}$ satisfying the relations
$$
 0<\alpha^{*}<\begin{cases}
\min\{1,(\nu_{1}-\underline{\nu_{2}})^{-1}\}\qquad\qquad\qquad\qquad\text{in
the FTN and STN cases},
\\
\min\{1,(\nu_{1}-\nu_{2}+\max\{\nu_{1},\tilde{\nu}\})^{-1}\}\qquad\text{in
the TTN case},
\end{cases}
$$
 then Lemma \ref{l7.2} provides the bound
\[
\Delta_{2}\leq
\begin{cases}
O(\delta^{1-\alpha^{*}(\nu_{1}-\underline{\nu_{2}})})\qquad\qquad\qquad\text{
in
the FTN and STN cases,}\\
 O(\delta^{1-\alpha^{*}(\nu_{1}-\nu_{2}+\max\{\nu_{1},\tilde{\nu}\})})\qquad\text{in the TTN
 case}.
\end{cases}
\]
\end{remark}
\noindent\emph{Proof of Lemma \ref{l7.2}.} Here, we provide the
 proof of this claim in the case of $\c_{0}>0$, the remaining
case is analyzed with the similar arguments. For simplicity, we
assume that $0<\nu_{2,\delta}<\nu_{2}$, that is
$\underline{\nu_{2}}=\nu_{2,\delta}$. Taking into account the
nonvanishing of $\rho_{2}(\tau)$ if $\tau\in[0,t^{*}]$, we rewrite
\eqref{7.7} in the following form
\begin{align*}
\frac{d}{d\tau}([\omega_{1-\nu_{2}}-\omega_{1-\nu_{2,\delta}}]*[\psi-\psi(0)])(\tau)&
=-
\frac{d}{d\tau}(\omega_{1-\nu_{2,\delta}}*[\Psi_{\delta}-\Psi_{\delta}(0)])(\tau)+\frac{\Phi_{\delta}(\tau)}{\rho_{2}(\tau)}
\\
&
-\frac{(\mathcal{K}*b_{0}\Psi_{\delta})(\tau)}{\rho_{2}(\tau)}-\frac{a_{0}(\tau)\Psi_{\delta}(\tau)}{\rho_{2}(\tau)}
+
\frac{\rho_{1}(\tau)\D_{\tau}^{\nu_{1}}\Psi_{\delta}(\tau)}{\rho_{2}(\tau)}.
\end{align*}
After that, integrating over $[0,t]$ (with $0<t\leq t^{*}$) and
bearing in mind the continuity of $\psi$ and $\psi_{\delta}$, we
arrive at the relation
\[
([\omega_{1-\nu_{2}}-\omega_{1-\nu_{2,\delta}}]*[\psi-\psi(0)])(t)=\Phi_{2,\delta}(t)
\]
for any $t\in(0,t^{*}]$, where
\begin{align*}
\Phi_{2,\delta}(t)&=-(\omega_{1-\nu_{2,\delta}}*[\Psi_{\delta}-\Psi_{\delta}(0)])(t)+\int_{0}^{t}\frac{\Phi_{\delta}(\tau)}{\rho_{2}(\tau)}d\tau
-\int_{0}^{t}\frac{(\mathcal{K}*b_{0}\Psi_{\delta})(\tau)}{\rho_{2}(\tau)}d\tau
\\
&
-\int_{0}^{t}\frac{a_{0}(\tau)\Psi_{\delta}(\tau)}{\rho_{2}(\tau)}d\tau
+
\int_{0}^{t}\frac{\rho_{1}(\tau)\D_{\tau}^{\nu_{1}}\Psi_{\delta}(\tau)}{\rho_{2}(\tau)}d\tau.
\end{align*}
At this point, we set
\begin{align*}
\mathcal{S}(t)&=\rho_{1}(0)\Gamma(1+\nu_{1})-\rho_{2}(0)t^{\nu_{1}-\nu_{2}}\Gamma(1+\nu_{2}),\\
\mathcal{S}_{1}(t)&=\c_{0}+\nu_{1}\rho_{1}(0)t^{-\nu_{1}}J_{\nu_{1}}(\psi,t)-\nu_{2}\rho_{2}(0)t^{-\nu_{2}}J_{\nu_{2}}(\psi,t),
\end{align*}
where $J_{\nu_{i}}(\psi,t)$ is defined in Lemma \ref{l4.2}. Then,
utilizing the mean value theorem to the difference
$\omega_{1-\nu_{2}}(\tau)-\omega_{1-\nu_{2,\delta}}(\tau)$ and
applying Lemma \ref{l4.2} to $\psi-\psi(0),$ we end up with the
equality
\begin{equation}\label{7.9}
\Delta_{2}\int_{0}^{t}\tau^{\nu_{1}}\frac{\mathcal{S}_{1}(\tau)}{\mathcal{S}(\tau)}\frac{\partial
\omega_{1-\nu^{*}}}{\partial\nu^{*}}(t-\tau)d\tau=\Phi_{2,\delta}(t)
\end{equation}
for each $t\in(0,t^{*}]$, where $\nu^{*}\in[\nu_{2,\delta},\nu_{2}]$
is a middle point.

To evaluate the left-hand side of \eqref{7.9} for each
$t\in(0,t_{1}]$, utilizing consistently \cite[Corollary 5.2]{SV} and
the easily verified inequality
$$
|\ln\tau|-\gamma-\sum_{n=1}^{\infty}\tfrac{\nu^{*}}{n(n-\nu^{*})}>|\ln
t|-\gamma-\sum_{n=1}^{\infty}\tfrac{\nu_{1}}{n(n-\nu_{1})}>0
$$
for any $\tau\in(0,t)$ with $ t<t_{2}$, we   deduce that
\begin{align}\label{7.10}\notag
&\frac{\partial
\omega_{1-\nu^{*}}}{\partial\nu^{*}}(\tau)=\omega_{1-\nu^{*}}(\tau)[|\ln\tau|-\gamma-\sum_{n=1}^{\infty}\tfrac{\nu^{*}}{n(n-\nu^{*})}]
\\&
\geq \Big(|\ln
t|-\gamma-\sum_{n=1}^{\infty}\tfrac{\nu_{1}}{n(n-\nu_{1})}\Big)
\omega_{1-\nu^{*}}(\tau)>0,
\end{align}
if $0\leq\tau\leq t<t_{2}$.

Keeping in mind assumptions on the coefficients $\rho_{i},$
  the function $\psi$ and value $\c_{0}$, we have
 \begin{equation}\label{7.11}
\frac{\mathcal{S}_{1}(\tau)}{\mathcal{S}(\tau)}\geq\c_{1}(t)>0
 \end{equation}
if $0\leq\tau\leq t<t_{2}$. The last inequality in \eqref{7.11} is
dictated by Remark \ref{r7.1}.

Coming back to equality \eqref{7.9} and taking into account
\eqref{7.10} and \eqref{7.11}, we obtain
\[
0<\c_{1}(t)\Big[|\ln
t|-\gamma-\sum_{n=1}^{\infty}\tfrac{\nu_{1}}{n(n-\nu_{1})}\Big]\int_{0}^{t}\tau^{\nu_{1}}\omega_{1-\nu^{*}}(t-\tau)d\tau\Delta_{2}\leq
|\Phi_{2,\delta}(t)|
\]
for each $t\in(0,t_{2})$. In fine, computing the integral in the
left-hand side and exploiting Corollary \ref{c7.1} to manage the
right-hand side, we end up with the desired  estimate, which
completes the proof of this lemma.
 \qed

\subsection{The estimate of $\Delta_{2}$ in the case of the II type
FDO}\label{s7.2}

To evaluate $\Delta_{2}$ in this case,  we will follow the strategy
employed  in Section \ref{s7.1}. First, we introduce the quantities
\[
\c_{2}=\D_{t}^{\nu_{2}}(\rho_{2}\psi)(0),\quad\alpha_{1}=\min\{\alpha\nu_{1}/2,1-\nu_{1}\},\quad
\alpha_{2}=\begin{cases} \nu_{2}\quad\text{if}\quad\c_{2}\neq 0,\\
\nu_{1}\quad\text{otherwise},
\end{cases}
\]
and  the threshold time
\begin{align*}
\hat{t}_{2}&=\min\Big\{\hat{t}_{1},t^{*},\exp\Big(-\gamma-\sum_{n=1}^{\infty}\tfrac{\nu_{1}}{n(n-\nu_{1})}\Big)\Big\},
\\&\text{where}\\
\hat{t}_{1}&=\begin{cases}
\frac{[|\c_{2}|\Gamma(1+x^{*})]^{\frac{2}{\alpha\nu_{1}}}}
{\Big[\Gamma(1+\alpha\nu_{1}/2)\langle\D_{t}^{\nu_{2}}(\rho_{2}\psi)\rangle_{t,[0,t^{*}]}^{(\alpha\nu_{1}/2)}
\Big]^{\frac{2}{\alpha\nu_{1}}}}
\qquad\qquad\qquad\qquad\text{if}\quad\c_{2}\neq 0,\\
\frac{[|\c_{0}|\Gamma(1+\nu_{1}+\frac{\alpha\nu_{1}}{2})]^{\frac{1}{\alpha_{1}}}}
{\Big[\Gamma(1+\nu_{1})\{\Gamma(1+\frac{\alpha\nu_{1}}{2})\langle\D_{t}^{\nu_{1}}(\rho_{1}\psi)\rangle_{t,[0,t^{*}]}^{(\frac{\alpha\nu_{1}}{2})}
+\rho_{1}(0)|\psi(0)|\|\frac{\rho_{1}}{\rho_{2}}\|_{\C([0,t^{*}])}\|(\frac{\rho_{2}}{\rho_{1}})'\|_{\C([0,t^{*}])}
\Gamma(1+\nu_{1}+\frac{\alpha\nu_{1}}{2})\}
\Big]^{\frac{1}{\alpha_{1}}}}\quad\text{otherwise}.
\end{cases}
\end{align*}

In further analysis, we  also need   the function
\[
\c_{3}(t)=\begin{cases}
|\c_{2}|-\frac{t^{\alpha\nu_{1}/2}\Gamma(1+\alpha\nu_{1}/2)}{\Gamma(1+x^{*})}
\langle\D_{t}^{\nu_{2}}(\rho_{2}\psi)\rangle_{t,[0,t^{*}]}^{(\alpha\nu_{1}/2)}\qquad\qquad\text{if}\quad\c_{2}\neq 0,\\
\\
\Big(\underset{[0,t]}{\min}|\tfrac{\rho_{2}}{\rho_{1}}|\Big)\bigg\{\frac{|\c_{0}|}{\Gamma(1+\nu_{1})}-
t^{\alpha_{1}}\Big[\frac{\Gamma(1+\frac{\alpha\nu_{1}}{2})\langle\D_{t}^{\nu_{1}}(\rho_{1}\psi)\rangle_{t,[0,t^{*}]}^{(\frac{\alpha\nu_{1}}{2})}}
{\Gamma(1+\nu_{1}+\frac{\alpha\nu_{1}}{2})}\\
+
\|\frac{\rho_{1}}{\rho_{2}}\|_{\C([0,t^{*}])}\|(\frac{\rho_{2}}{\rho_{1}})'\|_{\C([0,t^{*}])}\rho_{1}(0)|\psi(0)|
\Big] \bigg\}\qquad\qquad\quad \text{otherwise}.
\end{cases}
\]
Clearly, if $t\in[0,\hat{t}_{2}]$, then the function $\c_{3}(t)$ is
positive.

\begin{lemma}\label{l7.3}
Let $\D_{t}$ be the II type FDO and assumptions of Lemma \ref{l7.2}
hold. Then the following estimate holds for each
$t\in(0,\hat{t}_{2}]$,
\[
\Delta_{2}\leq 2\underset{t\in(0,\hat{t}_{2})}{\inf}
\frac{t^{\underline{\nu_{2}}-1-\alpha_{2}}\Phi_{3,\delta}(t)}
{\c_{3}(t)[|\ln\,t|-\gamma-\sum_{n=1}^{\infty}\tfrac{\nu_{1}}{n(n-\nu_{1})}]\Gamma(1+x^{*})},
\]
where
\begin{align*}
\Phi_{3,\delta}(t)&=t\underset{\tau\in[0,t]}{\sup}|\Phi_{\delta}(\tau)|+t
\delta
C_{\psi}\bigg[\frac{t^{\nu_{1}}\|a_{0}\|_{\C([0,t])}}{1+\nu_{1}}
+\Gamma(1+\nu_{1}) +
\frac{\|\mathcal{K}\|_{L_{1}(0,t)}t^{\nu_{1}}\|b_{0}\|_{\C([0,t])}}{1+\nu_{1}}
\\
& + \|\rho_{1}\|_{\C([0,t])}\Gamma(1+\nu_{1}) \bigg]
\end{align*}
in the case of FTN or STN, while in the TTN case
\begin{align*}
\Phi_{3,\delta}(t)&=t\underset{\tau\in[0,t]}{\sup}|\Phi_{\delta}(\tau)|
+t^{1-\max\{\nu_{1},\tilde{\nu}\}} \delta
\bigg[t^{\max\{\nu_{1},\tilde{\nu}\}}\|a_{0}\|_{\C([0,t])}
\Big(C_{1}+\frac{C_{2}+C_{3}}{1+\nu_{1}-\tilde{\nu}}t^{\nu_{1}-\tilde{\nu}}\Big)
\\
& +(1+\|\rho_{1}\|_{\C([0,t])})\Big[
\frac{C_{1}t^{\max\{\nu_{1},\tilde{\nu}\}-\nu_{1}}}{\Gamma(2-\nu_{1})}
+\frac{(C_{2}+C_{3})\Gamma(1+\nu_{1}-\tilde{\nu})}{\Gamma(2-\tilde{\nu})}t^{\max\{\nu_{1},\tilde{\nu}\}-\tilde{\nu}}\Big)\\
&+\frac{[1+\rho_{1}(0)]C_{1}t^{\max\{\nu_{1},\tilde{\nu}\}-\nu_{1}}}{\Gamma(2-\nu_{1})}
+\|\mathcal{K}\|_{L_{1}(0,t)}t^{\max\{\nu_{1},\tilde{\nu}\}}\|b_{0}\|_{\C([0,t])}
\Big( C_{1}+\frac{C_{2}+C_{3}}{1+\nu_{1}-\tilde{\nu}}
t^{\nu_{1}-\tilde{\nu}} \Big) \bigg].
\end{align*}
\end{lemma}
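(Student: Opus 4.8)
The plan is to run the argument of Lemma \ref{l7.2} almost verbatim, the only genuine change being dictated by the absence of a Leibniz rule for the II type FDO. First I would start from \eqref{7.8} and split
\[
\D_{t}^{\nu_{2}}(\rho_{2}\psi)-\D_{t}^{\nu_{2,\delta}}(\rho_{2}\psi_{\delta})
=\bigl[\D_{t}^{\nu_{2}}(\rho_{2}\psi)-\D_{t}^{\nu_{2,\delta}}(\rho_{2}\psi)\bigr]+\D_{t}^{\nu_{2,\delta}}(\rho_{2}\Psi_{\delta}).
\]
Writing the first bracket as $\frac{d}{dt}\bigl([\omega_{1-\nu_{2}}-\omega_{1-\nu_{2,\delta}}]*[\rho_{2}\psi-\rho_{2}(0)\psi(0)]\bigr)$ and, likewise, $\D_{t}^{\nu_{2,\delta}}(\rho_{2}\Psi_{\delta})=\frac{d}{dt}(\omega_{1-\nu_{2,\delta}}*\rho_{2}\Psi_{\delta})$ and $\D_{t}^{\nu_{1}}(\rho_{1}\Psi_{\delta})=\frac{d}{dt}(\omega_{1-\nu_{1}}*\rho_{1}\Psi_{\delta})$ (the $\Psi_{\delta}(0)$--corrections are bounded by $\delta\g(0)$ and get absorbed later), then moving everything to the right and integrating over $[0,t]$ — all convolutions vanish at $t=0$ by continuity of $\psi,\psi_{\delta},\rho_{i}$ — I arrive at
\[
\bigl([\omega_{1-\nu_{2}}-\omega_{1-\nu_{2,\delta}}]*[\rho_{2}\psi-\rho_{2}(0)\psi(0)]\bigr)(t)=\hat{\Phi}_{2,\delta}(t),
\]
where $\hat{\Phi}_{2,\delta}$ is the sum of $-(\omega_{1-\nu_{2,\delta}}*\rho_{2}\Psi_{\delta})(t)$, $(\omega_{1-\nu_{1}}*\rho_{1}\Psi_{\delta})(t)$, $\int_{0}^{t}\Phi_{\delta}$, $-\int_{0}^{t}\mathcal{K}*b_{0}\Psi_{\delta}$ and $-\int_{0}^{t}a_{0}\Psi_{\delta}$.

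The next step is to isolate the leading singular order of $\rho_{2}\psi-\rho_{2}(0)\psi(0)$, which is exactly where the case split in the definitions of $\alpha_{2}$ and $\c_{3}(t)$ originates. If $\c_{2}=\D_{t}^{\nu_{2}}(\rho_{2}\psi)(0)\neq 0$, I would write $\rho_{2}\psi-\rho_{2}(0)\psi(0)=\omega_{\nu_{2}}*\D_{t}^{\nu_{2}}(\rho_{2}\psi)$ and peel off its value at $0$; the H\"{o}lder remainder is controlled by $\langle\D_{t}^{\nu_{2}}(\rho_{2}\psi)\rangle_{t,[0,t^{*}]}^{(\alpha\nu_{1}/2)}$, so after elementary $\Gamma$--estimates (in particular $\Gamma(1+\nu_{2})\leq 1$) one gets $\rho_{2}(\tau)\psi(\tau)-\rho_{2}(0)\psi(0)\geq \c_{3}(t)\,\tau^{\nu_{2}}$ on $[0,t]$, with $\alpha_{2}=\nu_{2}$. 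If $\c_{2}=0$, I would factor $\rho_{2}\psi=\tfrac{\rho_{2}}{\rho_{1}}(\rho_{1}\psi)$, apply the second representation of Lemma \ref{l4.2} to $\rho_{1}\psi$ (leading term $\tfrac{\c_{0}}{\Gamma(1+\nu_{1})}\tau^{\nu_{1}}$), and note that the extra factor $\tfrac{\rho_{2}(\tau)}{\rho_{1}(\tau)}-\tfrac{\rho_{2}(0)}{\rho_{1}(0)}=O(\tau)$ together with the $J$--remainders perturb only at order $\tau^{\nu_{1}+\alpha_{1}}$, $\alpha_{1}=\min\{\alpha\nu_{1}/2,1-\nu_{1}\}$, whence $\rho_{2}(\tau)\psi(\tau)-\rho_{2}(0)\psi(0)\geq \c_{3}(t)\,\tau^{\nu_{1}}$ with $\alpha_{2}=\nu_{1}$. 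In both subcases the constraint $t<\hat{t}_{1}$ is precisely what forces $\c_{3}(t)>0$ on $(0,\hat{t}_{2})$.

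Then I would feed the mean value theorem $\omega_{1-\nu_{2}}-\omega_{1-\nu_{2,\delta}}=\Delta_{2}\,\partial_{\nu^{*}}\omega_{1-\nu^{*}}$, $\nu^{*}\in[\underline{\nu_{2}},\nu_{2}]$, into the convolution identity and invoke \cite[Corollary 5.2]{SV}:
\[
\partial_{\nu^{*}}\omega_{1-\nu^{*}}(s)=\omega_{1-\nu^{*}}(s)\Bigl[|\ln s|-\gamma-\sum_{n\geq1}\tfrac{\nu^{*}}{n(n-\nu^{*})}\Bigr]\geq\Bigl(|\ln t|-\gamma-\sum_{n\geq1}\tfrac{\nu_{1}}{n(n-\nu_{1})}\Bigr)\omega_{1-\nu^{*}}(s)>0
\]
for $0<s\leq t<\hat{t}_{2}$ (the factor $\exp(-\gamma-\sum\ldots)$ in $\hat{t}_{2}$ secures positivity, and $|\ln\cdot|$ as well as the series are monotone). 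Since $\partial_{\nu^{*}}\omega_{1-\nu^{*}}(t-\tau)\geq0$ and $\rho_{2}(\tau)\psi(\tau)-\rho_{2}(0)\psi(0)\geq0$ on $[0,t]$, the identity gives
\begin{align*}
|\hat{\Phi}_{2,\delta}(t)|&=\Delta_{2}\int_{0}^{t}\partial_{\nu^{*}}\omega_{1-\nu^{*}}(t-\tau)\,[\rho_{2}(\tau)\psi(\tau)-\rho_{2}(0)\psi(0)]\,d\tau\\
&\geq\Delta_{2}\,\c_{3}(t)\Bigl[|\ln t|-\gamma-\sum_{n\geq1}\tfrac{\nu_{1}}{n(n-\nu_{1})}\Bigr]\int_{0}^{t}\tau^{\alpha_{2}}\omega_{1-\nu^{*}}(t-\tau)\,d\tau,
\end{align*}
where the last integral equals $\tfrac{\Gamma(1+\alpha_{2})}{\Gamma(2+\alpha_{2}-\nu^{*})}t^{1+\alpha_{2}-\nu^{*}}\geq\tfrac{\Gamma(1+x^{*})}{2}\,t^{1+\alpha_{2}-\underline{\nu_{2}}}$ (using $\nu^{*}\geq\underline{\nu_{2}}$, $t<1$, $\Gamma(1+\alpha_{2})\geq\Gamma(1+x^{*})$, $\Gamma(2+\alpha_{2}-\nu^{*})<2$). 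Finally, estimating each of the five pieces of $\hat{\Phi}_{2,\delta}$ by Corollary \ref{c7.1}(i),(ii),(iv) (with $\mathcal{R}\in\{1,a_{0},b_{0},\rho_{1},\rho_{2}\}$) and the assumed boundedness of $F_{\delta}$, one recognizes precisely $\Phi_{3,\delta}(t)$; dividing and taking the infimum over $t\in(0,\hat{t}_{2})$ yields the claim.

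I expect the middle step to be the main obstacle: correctly pinning down the exponent $\alpha_{2}$ and the explicit positive lower bound $\c_{3}(t)$ for $\rho_{2}\psi-\rho_{2}(0)\psi(0)$, and checking that $\hat{t}_{1}$ is chosen so that $\c_{3}$ never changes sign. In the case $\c_{2}=0$ this requires routing through $\rho_{1}\psi$ and keeping uniform control of every remainder term generated by the non-Leibniz formula \cite[Proposition 5.5]{SV} — the genuinely new ingredient relative to the I type FDO handled in Lemma \ref{l7.2}.
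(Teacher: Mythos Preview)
Your proposal is correct and follows essentially the same route as the paper: integrate \eqref{7.8} to obtain the convolution identity, apply the mean value theorem to $\omega_{1-\nu_{2}}-\omega_{1-\nu_{2,\delta}}$, derive the lower bound $\rho_{2}(\tau)\psi(\tau)-\rho_{2}(0)\psi(0)\geq\c_{3}(t)\tau^{\alpha_{2}}$ via the two-case analysis ($\c_{2}\neq0$ using the $\omega_{\nu_{2}}$ representation, $\c_{2}=0$ by routing through $\rho_{1}\psi$), and then estimate the right-hand side by Corollary~\ref{c7.1}. One small slip: in the paper's $\Phi_{4,\delta}$ the term coming from $\D_{t}^{\nu_{2,\delta}}(\rho_{2}\Psi_{\delta})$ is recorded without the $\rho_{2}$ factor (which is why $\|\rho_{2}\|$ does not appear in $\Phi_{3,\delta}$), so your bookkeeping with $\mathcal{R}=\rho_{2}$ would produce a harmless extra constant but not literally ``precisely $\Phi_{3,\delta}$''.
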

\begin{proof}
Here, by analogy with the proof of Lemma \ref{l7.2}, we assume, for
simplicity, the positivity of $\c_{0}$ and
$\nu_{2,\delta}\in(0,\nu_{2})$. Then integrating \eqref{7.8} over
$(0,t)$ (with arbitrary $t\in(0,\hat{t}_{2})$), we obtain
\[
([\omega_{1-\nu_{2}}-\omega_{1-\nu_{2,\delta}}]*[\rho_{2}\psi-\rho_{2}(0)\psi(0)])(t)=\Phi_{4,\delta}(t)
\]
for any $t\in(0,\hat{t}_{2})$, where
\begin{align*}
\Phi_{4,\delta}(t)&=-(\omega_{1-\nu_{2,\delta}}*[\Psi_{\delta}-\Psi_{\delta}(0)])(t)+\int_{0}^{t}\Phi_{\delta}(\tau)d\tau
-\int_{0}^{t}(\mathcal{K}*b_{0}\Psi_{\delta})(\tau)d\tau
\\
& -\int_{0}^{t}a_{0}(\tau)\Psi_{\delta}(\tau)d\tau +
(\omega_{1-\nu_{1}}*[\rho_{1}\Psi_{\delta}-\rho_{1}(0)\Psi_{\delta}(0)])(t).
\end{align*}
To handle the difference
$[\omega_{1-\nu_{2}}-\omega_{1-\nu_{2,\delta}}]$ in the left-hand
side of this equality, we again appeal to the mean-value theorem and
deduce the equality
\begin{equation}\label{7.12}
\Delta_{2}\int_{0}^{t}\frac{\partial\omega_{1-\nu^{*}}}{\partial\nu^{*}}(t-\tau)[\rho_{2}(\tau)\psi(\tau)-\rho_{2}(0)\psi(0)]d\tau
=\Phi_{4,\delta}(t)
\end{equation}
for each $t\in(0,\hat{t}_{2})$.

It is apparent that the right-hand side in this equality is tackled
with Corollary \ref{c7.1} and, besides, the term
$\frac{\partial\omega_{1-\nu^{*}}}{\partial\nu^{*}}$ is managed via
\eqref{7.10} if $t\in[0,\hat{t}_{1})$.  Thus, we are left to obtain
the proper   representation (to the further evaluation) of the
difference $[\rho_{2}(\tau)\psi(\tau)-\rho_{2}(0)\psi(0)].$ On this
route, bearing in mind that $\c_{0}=\D_{t}\psi(0)>0$, two
possibilities occur:

\noindent (i) either $\c_{2}\neq 0,$

\noindent (ii) or $\c_{2}= 0$ but
$\c_{0}=\D_{t}^{\nu_{1}}(\rho_{1}\psi)(0)>0.$

In the case (i), we assume, for simplicity, $\c_{2}>0$ (otherwise we
multiply equality \eqref{7.12} by $-1$) and, exploiting \cite[Lemma
4.1]{KPSV2}, we arrive at the following inequalities for any
$\tau\in(0,t)$ and each $t\in(0,\hat{t}_{2})$,
\begin{align}\label{7.13}\notag
\rho_{2}(\tau)\psi(\tau)-\rho_{2}(0)\psi(0)&=\frac{\c_{2}\tau^{\nu_{2}}}{\Gamma(1+\nu_{2})}+\frac{1}{\Gamma(\nu_{2})}\int_{0}^{\tau}(\tau-s)^{\nu_{2}-1}
[\D_{s}^{\nu_{2}}(\rho_{2}\psi)(s)-\D_{s}^{\nu_{2}}(\rho_{2}\psi)(0)]ds\\
&\geq \tau^{\nu_{2}}\Big[
\frac{\c_{2}}{\Gamma(1+\nu_{2})}-\frac{t^{\alpha\nu_{1}/2}\Gamma(1+\alpha\nu_{1}/2)}{\Gamma(1+\nu_{2}+\alpha\nu_{1}/2)}
\langle\D_{t}^{\nu_{2}}(\rho_{2}\psi)\rangle_{t,[0,\hat{t}_{2}]}^{(\alpha\nu_{1}/2)}
\Big]\\\notag &\geq\tau^{\nu_{2}}\c_{3}(t)>0.
\end{align}

As for the case (ii),   we  set, for simplicity, $\rho_{2}(\tau)>0$
(otherwise, we multiply \eqref{7.12} by $-1$) and get
\[
\rho_{2}(\tau)\psi(\tau)-\rho_{2}(0)\psi(0)=\frac{\rho_{2}(\tau)}{\rho_{1}(\tau)}[\rho_{1}(\tau)\psi(\tau)-\rho_{1}(0)\psi(0)]
+\rho_{1}(0)\psi(0)\Big[\frac{\rho_{2}(\tau)}{\rho_{1}(\tau)}-\frac{\rho_{2}(0)}{\rho_{1}(0)}\Big].
\]
Then, appealing to \cite[Lemma 4.1]{KPSV2} and the mean-value
theorem, we arrive at the representation
\begin{align*}
\rho_{2}(\tau)\psi(\tau)-\rho_{2}(0)\psi(0)&=\frac{\tau^{\nu_{1}}\rho_{2}(\tau)}{\rho_{1}(\tau)}\Big(
\frac{\c_{0}}{\Gamma(1+\nu_{1})}+\frac{\tau^{-\nu_{1}}}{\Gamma(\nu_{1})}\int_{0}^{\tau}(\tau-s)^{\nu_{1}-1}
[\D_{s}^{\nu_{1}}(\rho_{1}\psi)(s)-\D_{s}^{\nu_{1}}(\rho_{1}\psi)(0)]ds
\Big)\\
&+\tau\rho_{1}(0)\psi(0)\frac{d}{d\tau}\frac{\rho_{2}(\tau)}{\rho_{1}(\tau)}\Big|_{\tau=\tau^{*}}
\end{align*}
with the middle point $\tau^{*}\in[0,\tau]$. After that, performing
technical calculations, we end up with the bound
\begin{align*}
\rho_{2}(\tau)\psi(\tau)-\rho_{2}(0)\psi(0)&\geq
\frac{\tau^{\nu_{1}}\rho_{2}(\tau)}{\rho_{1}(\tau)}\Big(\frac{\c_{0}}{\Gamma(1+\nu_{1})}-\frac{t^{\alpha\nu_{1}/2}\Gamma(1+\alpha\nu_{1}/2)}
{\Gamma(1+\nu_{1}+\alpha\nu_{1}/2)}\langle\D_{t}^{\nu_{1}}(\rho_{1}\psi)\rangle_{t,[0,t^{*}]}^{(\alpha\nu_{1}/2)}\\
& -
\|\rho_{1}/\rho_{2}\|_{\C([0,t])}\|(\rho_{2}/\rho_{1})'\|_{\C([0,t])}t^{1-\nu_{1}}\rho_{1}(0)|\psi(0)|\Big)\\
&\geq \tau^{\nu_{1}}\c_{3}(t)>0
\end{align*}
for each $\tau\in(0,t)$ and $t\in(0,\hat{t}_{2})$. Collecting this
bound with \eqref{7.10}, \eqref{7.12} and \eqref{7.13} and bearing
in mind the definition of $\alpha_{2}$, we conclude that
\[
\Delta_{2}\c_{3}(t)\Big[|\ln\,
t|-\gamma-\sum_{n=1}^{\infty}\tfrac{\nu_{1}}{n(n-\nu_{1})}\Big]\int_{0}^{t}\omega_{1-\nu^{*}}(t-\tau)\tau^{\alpha_{2}}d\tau\leq|\Phi_{4,\delta}|.
\]
In fine, collecting the technical calculations with Corollary
\ref{c7.1}, we end up with the desired estimate which completes the
proof of this claim.
\end{proof}
\begin{remark}\label{r7.3}
It is worth noting that the right-hand side of the estimate to
$\Delta_{2}$ established in Lemma \ref{l7.3} contains
$\langle\D_{t}^{\nu_{1}}(\rho_{1}\psi)\rangle_{t,[0,t^{*}]}^{(\alpha\nu_{1}/2)}$
and
$\langle\D_{t}^{\nu_{2}}(\rho_{2}\psi)\rangle_{t,[0,t^{*}]}^{(\alpha\nu_{1}/2)}$.
In virtue of assumptions h3, h5 (with $\nu_{4}=\nu_{1}$ and
$\nu_{3}=\nu_{2}$) and \cite[Lemmas 5.5-5.6]{SV}, these terms are
managed via the bounds:
\begin{align*}
\langle\D_{t}^{\nu_{1}}(\rho_{1}\psi)\rangle_{t,[0,t^{*}]}^{(\alpha\nu_{1}/2)}&\leq
C_5\|\rho_{1}\|_{\C^{\nu}([0,t^{*}])}\|\D_{t}^{\nu_{1}}\psi\|_{\C^{\alpha\nu_{1}/2}([0,t^{*}])},\\
\langle\D_{t}^{\nu_{1}}(\rho_{2}\psi)\rangle_{t,[0,t^{*}]}^{(\alpha\nu_{1}/2)}&\leq
C_6\|\rho_{2}\|_{\C^{\nu}([0,t^{*}])}\|\D_{t}^{\nu_{1}}\psi\|_{\C^{\alpha\nu_{1}/2}([0,t^{*}])},
\end{align*}
where  positive quantities $C_5$ and $C_6$ depend only on $t^{*},$
$\alpha$.
\end{remark}
\begin{remark}\label{r7.4}
The straightforward technical calculations dictate that the estimate
stated in Remark \ref{r7.2} holds in the case of the II type FDO,
too.
\end{remark}


\section{Numerical Regularized  Reconstruction of Scalar Parameters}\label{s8}

\noindent In this section, we discuss numerical algorithms to
compute the parameters $\nu_1,\nu_i^{*}$ and $\rho_i^{*}$ in the
fractional differential operator $\mathbf{D}_{t}$ (given by either
\eqref{2.2} or \eqref{3.4*}) via the  explicit formulas, but we
consider the case of less smooth integral observation than it is
required in Theorems \ref{t3.1}, \ref{t3.2},  \ref{t5.1} and
\ref{t5.2}. Obviously, the measurements having such extra smoothness
in real life is more of an exception than a natural occurrence.
Namely, in practice, the observation data is often obtained  in a
discrete, noise-distorted form. In connection with this,  the
following very natural questions appear.
 \textit{Is it possible to apply the theoretically
justified formulas (see \eqref{3.2*}, \eqref{3.3*}) in the case of
the nonsmooth observation $\psi(t)$? If so, what is the way of their
optimal exploitation, that is the approach  providing reliable
results?}
 In the following subsections, we partially answer on these
questions.


\subsection{Algorithm of a numerical computation}\label{s8.1}
Suppose that we have the integral measurement $\psi(t)$ of the
solution $u(x,t)$ at discrete time moments $t_{k},$
$k=1,2,\ldots,K$, $0<t_{1}<t_{2}<\ldots<t_{K}\leq t^{*}$. We also
assume the presence of a noise $\{\delta_k\}_{k=1}^K$ getting worse
observations
\[
\psi_{\delta,k}=\int_{\Omega}u(x,t_{k})dx+\delta_{k},\quad
k=1,2,\ldots,K.
\]
Initial condition in \eqref{2.3} tells us that
\[
\int_{\Omega}u(x,0)dx=\int_{\Omega}u_{0}(x)dx=\psi_{0}.
\]
Computational formulas \eqref{3.2*} and \eqref{3.3*} contain
continuous-argument limits (i.e.~with respect to continuous variable
$t$). Hence, in order to exploit these formulas in the case of
discrete noisy measurements $\psi_{\delta,k}$, we have to
reconstruct
 approximately the
function $\psi(t)$ from the values $\psi_{\delta,k},$
$k=0,1,\ldots,K$, where we  set $ \psi_{\delta,0} \equiv \psi_{0} .
$ We recall that the functions $\mathcal{F}(t)$ (see Theorems
\ref{t3.1} and \ref{t3.2}) and $\widetilde{\mathcal{F}}(t)$,
$\widetilde{\mathcal{F}}_1(t)$ (see Theorems \ref{t5.1} and
\ref{t5.2}) in formula \eqref{3.3*} contain not only the observation
$\psi(t)$ but also its fractional derivatives. Thus, in order to
exploit this formula to reconstruct either $\nu_2$ (see Theorems
\ref{t3.1} and \ref{t5.1}) or $\nu_{i^{*}}$ (see Theorems \ref{t3.2}
and \ref{t5.2}), we should also compute the corresponding fractional
derivatives of the approximately reconstructed to $\psi(t)$.

Bearing in mind a consistently coupled character of formulas
\eqref{3.2*}, \eqref{3.3*} and \eqref{rho}, we exploit either
 a two-steps algorithm (if, following Theorem
\ref{t3.1} or \ref{t3.2}, we aim to find $(\nu_1,\nu_{2})$ or
$(\nu_1,\nu_{i*})$) or a three-steps algorithm (if we look for
$(\nu_1,\nu_2,\rho_2)$, see Theorems \ref{t5.1} and \ref{t5.2}). The
first stage deals with reconstruction of order $\nu_1$ via
\eqref{3.2*} and an approximate reconstruction of $\psi(t)$. On this
route, we use a similar   technique that was (successfully) utilized
in our previous papers
 \cite[Section 6]{PSV1}) and \cite[Section 8.2]{HPV};  its plainer
 counterpart was also elaborated in our earlier works \cite{KPSV,KPSV2} dealing with simpler IPs for single-term
 fractional subdiffusion equations featuring small-time noisy solution measurements. In
\cite{PSV1}, $\psi(t)$ was an observation of the solution $u$ at the
spatial point $x_{0}$ for small time, i.e. $\psi(t)=u(x_{0},t),$
$t\in[0,t^{*}]$, while in \cite{HPV} the measurement $\psi(t)$ was
defined similar to \eqref{2.4}. Here, for the reader's convenience,
we describe this approach in our notations. On the second stage,
exploiting the reconstructed $\nu_{1}$ and $\psi$ along with formula
\eqref{3.3*}, we reconstruct  of the order $\nu_{2}$. We notice
that,  $\nu_{i^{*}}$ (see Theorem \ref{t3.2}) is computed with the
same reconstruction technique, hence we omit its description here.
As for the third step (if any), we compute unknown constant
coefficient $\rho_{2}$ or $\rho_{i^{*}}$ via formula \eqref{rho}
with $\nu_1, \nu_2$ or $\nu_{i^{*}}$ and $\psi$  have been found at
the previous two steps.

\noindent \textbf{Step 1:} Appealing to Tikhonov regularization
scheme \cite{TG,IJ}, we approximate $\psi(t)$ from  noised data
$\{\psi_{\delta,k}\}_{k=0}^{K}$ by means of a minimizer of a
penalized least square functional
\begin{equation}\label{8.1*}
\sum_{k=0}^{K}[\psi(t_{k})-\psi_{\delta,k}]^{2}+ \sigma
\|\psi\|^{2}_{L_{t^{-a}}^{2}(0,t_{K})}\longrightarrow \min,
\end{equation}
where $\sigma$ is a regularization parameter. It is worth noting
that, the choice of the weighted space $L^{2}_{t^{-a}}$ in this
functional is dictated by the following asymptotic behavior of
$\psi(t)$ for small time moments, $t\leq t^{*}$ which follows from
Lemmas \ref{l4.2} and \ref{l4.3}:
\begin{equation}
\label{asymptotic_t0} \psi(t)=\int_{\Omega}u_{0}(x)dx+O(t^{\nu_{1}})
\end{equation}
Indeed, this behavior suggests that the target function should be
(at least) square integrable on $(0,t_{K}),$ $t_{K}\leq t^{*}$ with
an unbounded weight $t^{-a}$, $a\in(0,1)$.

\noindent As for an approximate minimizer to \eqref{8.1*}, it is
natural to seek it in the  finite-dimensional form
\begin{equation}\label{8.2}
\psi_{\delta}(\zeta,t)=\sum_{j=1}^{\mathfrak{I}}q_{j}t^{\beta_{j}}
+\sum_{j=\mathfrak{I}+1}^{\mathfrak{P}}q_{j}P_{j-\mathfrak{I}-1}^{(0,-a)}(t/t_{K}).
\end{equation}
Here, the shifted Jacobi polynomials
\[
P_{m}^{(0,-a)}(t/t_{K})=\sum_{i=0}^{m}\left(\begin{array}{c}
    m\\
    i
\end{array}\right)
\left(\begin{array}{c}
    m-a\\
   m-i
\end{array}\right)\!(t/t_{K}-1)^{m-i}(t/t_{K})^{i}\quad\text{with}\quad
t\in(0,t_{K})
\]
are an orthogonal system in $L_{t^{-a}}^{2}(0,t_{K})$, and power
functions $t^{\beta_{j}}$ ($j=1,2,\ldots,\mathfrak{I}$) are
incorporated to
 facilitate capturing
 small-time asymptotics (see \eqref{asymptotic_t0}) of the true
problem solution, whereas
$\beta_{1}<\beta_{2}<\ldots<\beta_{\mathfrak{I}}$ are the initial
guesses for the $\nu_{1}$ value, if any. We notice that the choice
of $\beta_{i}$ is user-defined, and in our calculations in Section
\ref{s8.2}, we use the uniform distribution on (0,1), i.e.
$\beta_{i}=\frac{i}{\mathfrak{I}}$.
 As for the unknown coefficients $q_{j}$ in \eqref{8.2},
they are identified from the corresponding system of linear
algebraic equations:
\[
(\mathbb{E}^{T}\mathbb{E}+\sigma\mathbb{H})\mathbf{q}=\mathbb{E}^{T}\bar{\psi_{\varepsilon}},
\]
where we set
\begin{align*}
\mathbf{q}&=(q_{1},...,q_{\mathfrak{P}}),\quad
\bar{\psi}_{\delta}=(\psi_{\delta,0},\psi_{\delta,1},...,\psi_{\delta,K})^{T},\\
\mathbb{E}&=\{E_{ij}\}^{K,\quad\mathfrak{P}}_{i=0,j=1},\quad
E_{ij}=e_{j}(t_{i}),\\
\mathbb{H}&=\{H_{l,m}\}_{l,m=1}^{\mathfrak{P}},\quad
H_{l,m}=\int_{0}^{t_{K}}t^{-a}e_{l}(t)e_{m}(t)dt,\\
e_{l}(t)&=
\begin{cases}
t^{\beta_{l}},\quad l=1,2,..,\mathfrak{I},\\
P_{l-\mathfrak{I}-1}^{(0,-a)}(t/t_{K}),\quad
l=1+\mathfrak{I},...,\mathfrak{P}.
\end{cases}
\end{align*}
Thus,  the technique  written above completes the approximate
recovery of $\psi(t)$ in the form of $\psi_{\delta}(\sigma,t)$.
After that, we are left to compute the limit in formula
\eqref{3.2*}. We recall that, the numerical calculations of such
limits are (generally) an ill-posed problem (see for details
\cite{LP}) which requires the use of a regularization technique.
Obviously, we can approximate the limit in \eqref{3.2*} as
\begin{equation}\label{8.1}
\nu_{1,\delta}(\sigma,\bar{t})=
\begin{cases}
\frac{\ln|\psi_{\delta}(\sigma,\bar{t})-\psi_{0}|}{\ln
\bar{t}}\qquad\qquad\qquad\text{in the case of the I type FDO},\\
\frac{\ln|\rho_{1}(\bar{t})\psi_{\delta}(\sigma,\bar{t})-\rho_{1}(0)\psi_{0}|}{\ln
\bar{t}}\qquad\text{ in the case of the II type FDO},
\end{cases}
\end{equation}
where a point $t=\bar{t}$  is selected sufficiently close to zero
and, hence, this point can be considered also as  a regularization
parameter.

\noindent Summing up, we conclude that the regularized approximation
$\nu_{1,\delta}(\sigma,\bar{t})$ of the order $\nu_1$ needs the
 two regularization parameters $\sigma$ and $\bar{t}$
which have to be chosen appropriately. Since, in reality, the
amplitudes $\delta_{k}$ of the noise perturbations are (generally)
unknown, the one should exploit the so-called noise level-free
regularization parameter choice rules. One of the oldest but the
simplest (in utilization) and still effective strategies of this
kind is the quasi-optimality criterion  \cite{LP,TG}. It is worth
noting that, its successful use in the choice of multiple
regularization parameters (similar to $\sigma$, $\bar{t}$) has been
demonstrated in our previous papers \cite{KPSV,KPSV2,HPV,PSV1} and
its effectiveness in study of various  inverse problems has been
advocated in \cite{BK}. Bearing in mind these arguments along with
strategy to parameter selection for multipenalty regularization
\cite{FNP}, we introduce two geometric sequences of regularization
parameters
\[
\sigma = \sigma_{i}=\sigma_{1}\xi_{1}^{i-1},\quad
i=1,2,\ldots,K_{1},\quad\text{and}\quad
\bar{t}=\bar{t}_{j}=\bar{t}_{1}\xi_{2}^{j-1},\quad
j=1,2,\ldots,K_{2},
\]
with (user-defined) values $\sigma_1$ and $\bar{t}_{1},$ and
$\xi_{1},\xi_{2}\in(0,1)$. The magnitudes
$\nu_{1,\delta}(\sigma_{i},\bar{t}_{j})$ have to be calculated for
such indices $i$ and $j$. After that for each $\bar{t}_{j}$ we then
should seek $\sigma_{i_{j}}\in\{\sigma_{i}\}_{i=1}^{K_{1}}$ such
that
\begin{subequations}
\label{reconstrunction_alg}
\begin{gather}
|\nu_{1,\delta}(\sigma_{i_{j}},\bar{t}_{j})-\nu_{1,\delta}(\sigma_{i_{j}-1},\bar{t}_{j})|=
\min\{|\nu_{1,\delta}(\sigma_{i},\bar{t}_{j})-\nu_{1,\delta}(\sigma_{i-1},\bar{t}_{j})|,\quad
i=2, 3,\ldots,K_{1}\}.\label{reconstrunction_alg_step1}
\intertext{Next, $\bar{t}_{j_{0}}$ is selected from
$\{\bar{t}_{j}\}_{j=1}^{K_{2}}$ such that}
|\nu_{1,\delta}(\sigma_{i_{j_0}},\bar{t}_{j_0})-\nu_{1,\delta}(\sigma_{i_{j_0-1}},\bar{t}_{j_0-1})|
=
\min\{|\nu_{1,\delta}(\sigma_{i_{j}},\bar{t}_{j})-\nu_{1,\delta}(\sigma_{i_{j-1}},\bar{t}_{j-1})|,\quad
j=2,3,\ldots,K_{2}\}.\label{reconstrunction_alg_step2}
\end{gather}
\end{subequations}
At last, $\nu_{1,\delta}(\sigma_{i_{j_{0}}},\bar{t}_{j_{0}})$ (which
is computed via \eqref{8.1} with $\sigma=\sigma_{i_{j_{0}}}$ and
$\bar{t}=\bar{t}_{j_{0}}$, and  will be henceforth simply denoted by
$\bar{\nu}_{1,\delta}$ for brevity) is chosen as the output of the
proposed algorithm, which completes  the  Step 1.

\noindent\textbf{Step 2:} On this stage, we aim to compute a minor
order of a fractional derivative in the fractional differential
operator $\mathbf{D}_{t}$ via formula \eqref{3.3*}, that is
$\nu_{2}$ (Theorem \ref{t3.1} or \ref{t5.1}) or $\nu_{i*}$ (Theorem
\ref{t3.2}). We notice that all these values are calculated with the
same formula with slightly modification of the function
$\mathcal{F}$. Therefore, here we restrict  ourself to describing
the computational strategy for $\nu_{2}$ given by Theorem
\ref{t3.1}, the remaining orders are computed with the similar
approach.
 To this end, we propose two different strategies and in
the following section, by means of numerical tests, we will compare
these techniques.

\noindent\textit{The First Strategy:} This technique is a
straightforward computation via formula \eqref{3.3*}, where $\nu_1$
and $\psi$ are replaced by $\bar{\nu}_{1,\delta}$ and
$\psi_{\delta}(\sigma,t)$ (reconstructed with Step 1).
 Namely, substituting $\bar{\nu}_{1,\delta},$ $\psi_{\delta}(\sigma,t)$ , $\bar{\sigma}=\sigma_{i_{j_{0}}}$
and $\bar{t}=\bar{t}_{j_{0}}$  in  \eqref{3.1} and \eqref{3.3*}, we
approximate $\nu_2$ via
\begin{equation}\label{8.5}
\bar{\nu}_{2,\delta}=\nu_{2,\delta}(\bar{\sigma},\bar{t})=\bar{\nu}_{1,\delta}
-\log_{\lambda}\bigg|\frac{\mathcal{F}_{\delta}(\bar{\sigma},\bar{t}\lambda)}{\mathcal{F}_{\delta}
(\bar{\sigma},\bar{t})}\bigg|
\end{equation}
with $\lambda\in(0,1)$ (selected by a user) and
\begin{equation}\label{8.3}
\mathcal{F}_{\delta}(\sigma,t)=\begin{cases}
\rho_{2}^{-1}(t)[\rho_{1}(t)
\mathbf{D}_{t}^{\bar{\nu}_{1,\delta}}\psi_{\delta}(\sigma,t)-\c_{\delta}(\sigma,t)]\quad
\text{the I type FDO},\\
\mathbf{D}_{t}^{\bar{\nu}_{1,\delta}}(\rho_{1}(t)\psi_{\delta}(\sigma,t))-\c_{\delta}(\sigma,t)\qquad\qquad
\text{the II type FDO},
\end{cases}
\end{equation}
\[
\c_{\delta}(\sigma,t)=\int_{\Omega}g(x,t)dx-d(\mathcal{K}*\mathcal{I})(t)
+a_{0}(t)\psi_{\delta}(\sigma,t)+(\mathcal{K}*b_{0}\psi_{\delta})(\sigma,t)-\mathcal{I}(t).
\]
Here, appealing to explicit form of the minimizer
$\psi_{\delta}(\sigma,t)$ (see \eqref{8.3}), we calculate
analytically
\begin{equation*}
\mathbf{D}_{t}^{\bar{\nu}_{1,\delta}}\psi_{\delta}(\sigma,t)=
\sum_{j=1}^{\mathfrak{I}}q_{j}\mathbf{D}_{t}^{\bar{\nu}_{1,\delta}}t^{\beta_{j}}
+\sum_{j=\mathfrak{I}+1}^{\mathfrak{P}}q_{j}\mathbf{D}_{t}^{\bar{\nu}_{1,\delta}}P_{j-\mathfrak{I}-1}^{(0,-a)}(t/t_{K}).
\end{equation*}
the same takes place in the case of
$\mathbf{D}_{t}^{\bar{\nu}_{1,\delta}}(\rho_{1}(t)\psi_{\delta}(\sigma,t))$
where we use \cite[Proposition 5.5]{SV} to compute a fractional
derivative of the product  $\rho_{1}(t)\psi_{\delta}(\sigma,t)$. As
for integral terms in $\mathfrak{C}_{\delta}(\sigma,t)$, they may be
computed either analytically or numerically. In conclusion, the
couple $(\bar{\nu}_{1,\delta},\bar{\nu}_{2,\delta})$ computed via
\eqref{8.1} and \eqref{8.5}, respectively, is the outcome of the
proposed two-step algorithm  which actually exploits  the
quasi-optimality approach only on the Step 1.

\noindent\textit{The Second Strategy:} Motivated by discussion in
\cite[Section 7]{HPV} (where several parameters were reconstructed
simultaneously), we incorporate here the regularized reconstruction
scheme not only to find $\nu_1$ but also to recover $\nu_2$. The
last means that instead of $\bar{\sigma}, \bar{t}$, we substitute
new regularization parameters $\hat{\sigma}, \hat{t}$ in
\eqref{8.5}, the mentioned  parameters are selected with the
algorithm presented in the Step 1.

At the first stage, we refine the guess $\beta_j$ in the minimizer
\eqref{8.2}. Namely,
 we replace
$\beta_{j}$ by $\hat{\beta}_{j}$ by the following rule:
$\hat{\beta}_{j^{*}}=\bar{\nu}_{1,\delta}$ and the remaining
$\hat{\beta}_{j}$,
$j\in\{1,2,...j^{*}-1,j^{*}+1,...,\mathfrak{I}\},$ are selected in a
small  neighborhood of $\bar{\nu}_{1,\delta}.$ Then, the approximate
minimizer of \eqref{8.1*} is sought in the form
\begin{equation}\label{8.6}
\hat{\psi}_{\delta}(\sigma,t)=\sum_{j=1}^{\mathfrak{I}}\hat{q}_{j}t^{\hat{\beta}_{j}}
+\sum_{j=\mathfrak{I}+1}^{\mathfrak{P}}\hat{q}_{j}P_{j-\mathfrak{I}-1}^{(0,-a)}(t/t_{K}).
\end{equation}
with the unknown coefficients $\hat{q}_{j}$ solving the algebraic
system
\[
(\widehat{\mathbb{E}}^{T}\widehat{\mathbb{E}}+\sigma\widehat{\mathbb{H}})\widehat{\mathbf{q}}=\widehat{\mathbb{E}}^{T}\bar{\psi_{\delta}}
\]
with
\begin{align*}
\widehat{\mathbf{q}}&=(\hat{q}_{1},...,\hat{q}_{\mathfrak{P}}),\quad
\bar{\psi}_{\delta}=(\psi_{\delta,0},\psi_{\delta,1},...,\psi_{\delta,K})^{T},\quad
\widehat{\mathbb{E}}=\{\hat{E}_{ij}\}^{K,\quad\mathfrak{P}}_{i=0,j=1},\quad
\hat{E}_{ij}=\hat{e}_{j}(t_{i}),\\
\widehat{\mathbb{H}}&=\{\hat{H}_{l,m}\}_{l,m=1}^{\mathfrak{P}},\quad
\hat{H}_{l,m}=\int_{0}^{t_{K}}t^{-a}\hat{e}_{l}(t)\hat{e}_{m}(t)dt,\quad
\hat{e}_{l}(t)=
\begin{cases}
t^{\hat{\beta}_{l}},\quad l=1,2,..,\mathfrak{I},\\
P_{l-\mathfrak{I}-1}^{(0,-a)}(t/t_{K}),\quad
l=1+\mathfrak{I},...,\mathfrak{P}.
\end{cases}
\end{align*}
Next, recasting the arguments leading to
\eqref{reconstrunction_alg_step1}--\eqref{reconstrunction_alg_step2},
we look for the regularization parameters
\[
\hat{\sigma} = \hat{\sigma}_{i}=\hat{\sigma}_{1}\xi_{1}^{i-1},\quad
i=1,2,\ldots,\hat{K}_{1},\quad\text{and}\quad
\hat{t}=\hat{t}_{j}=\hat{t}_{1}\xi_{2}^{j-1},\quad
j=1,2,\ldots,\hat{K}_{2},
\]
with (again user-defined) $\hat{\sigma}_1$ and $\hat{t}_{1},$ and
$\xi_{1},\xi_{2}\in(0,1)$. The quantities
$\nu_{2,\delta}(\hat{\sigma}_{i},\hat{t}_{j})$ have to be computed
for such indices $i$ and $j$ via \eqref{8.5} with
$\mathcal{F}_{\delta}$ and $\mathfrak{C}_{\delta}$ chosen in the
form \eqref{8.3} with $\hat{\psi}_{\delta}(\hat{\sigma},\hat{t})$ in
place of $\psi_{\delta}(\sigma,t)$. After that for each
$\hat{t}_{j}$ we then  look for
$\hat{\sigma}_{i_{j}}\in\{\hat{\sigma}_{i}\}_{i=1}^{\hat{K}_{1}}$
such that
\[
|\nu_{2,\delta}(\hat{\sigma}_{i_{j}},\hat{t}_{j})-\nu_{2,\delta}(\hat{\sigma}_{i_{j}-1},\hat{t}_{j})|=
\min\{|\nu_{2,\delta}(\hat{\sigma}_{i},\hat{t}_{j})-\nu_{2,\delta}(\hat{\sigma}_{i-1},\hat{t}_{j})|,\quad
i=2, 3,\ldots,\hat{K}_{1}\},\] then $\hat{t}_{j_{0}}$ is taken from
$\{\hat{t}_{j}\}_{j=1}^{\hat{K}_{2}}$ such that
\[
|\nu_{2,\delta}(\hat{\sigma}_{i_{j_0}},\hat{t}_{j_0})-\nu_{2,\delta}(\hat{\sigma}_{i_{j_0-1}},\hat{t}_{j_0-1})|
=
\min\{|\nu_{2,\delta}(\hat{\sigma}_{i_{j}},\hat{t}_{j})-\nu_{2,\delta}(\hat{\sigma}_{i_{j-1}},\hat{t}_{j-1})|,\quad
j=2,3,\ldots,\hat{K}_{2}\}.\] At last,
$\hat{\nu}_{2,\delta}:=\nu_{2,\delta}(\hat{\sigma}_{i_{j_{0}}},\hat{t}_{j_{0}})$
is the outcome of the Second Strategy  which along with
$\bar{\nu}_{1,\delta}$ recovered via Step 1  complete our
computational two-step algorithm, exploiting the quasi-optimality
approach to recovery  both $\nu_1$ and $\nu_2$.

\noindent\textbf{Step 3:} We remark that in accordance of Theorems
\ref{t5.1} and \ref{t5.2}, this step is performed only if we should
additionally seek for unknown constant coefficient either $\rho_2$
in the two-term $\mathbf{D}_{t}$ or $\rho_{i^{*}}$ in the multi-term
FDO. To this end, we utilize formula \eqref{rho} with the recovered
values to $\nu_1,\nu_2$ or $\nu_{i^{*}}$, $\psi$ being found in the
previous steps. For simplicity, here we restrict ourselves  with
consideration of two-term $\mathbf{D}_{t}$ and finding $\rho_2$. The
case of M-term fractional differential operator and therefore
finding $\rho_{i^{*}}$ are discussed in the same way. We approximate
$\rho_2$ as
\begin{equation}\label{rho.1}
\rho_{2,\delta}=\frac{\widetilde{\mathcal{F}}(t_0)}{(\omega_{\nu_{1,\delta}-\nu_{2,\delta}}*\mathbf{D}_{t}^{\nu_{1,\delta}}\psi_{\delta})(t_0)},
\end{equation}
where $t_{0}\in(0,t^*]$ is selected by a user such that
$\widetilde{\mathcal{F}}_{\delta}(t_0)\neq 0$, and the value of
$\widetilde{\mathcal{F}}_{\delta}$ is computed by means of
\eqref{5.7} with $\nu_{1,\delta},\psi_{\delta}$ and
$\mathfrak{C}_{\delta}$ in place of $\nu_{1},\psi$ and
$\mathfrak{C}$. As for $\nu_{2,\delta}$ and $\psi_{\delta}$ in
\eqref{rho.1}, one can set either
$\nu_{2,\delta}=\bar{\nu}_{2,\delta}$ or
$\nu_{2,\delta}=\hat{\nu}_{2,\delta}$ and, accordingly,  either
$\psi_{\delta}=\psi_{\delta}(\sigma,t)$ or
$\psi_{\delta}=\hat{\psi}_{\delta}(\sigma,t)$. Namely, such
selection is explained by two different strategies exploited in Step
2 of the computational algorithm. Our preliminary observation of
numerical tests given in next section suggests to choose
$\nu_{2,\delta}=\hat{\nu}_{2,\delta}$ in \eqref{rho.1}, while the
selection of $\psi_{\delta}$ does not essentially influence on the
numerical outcomes. Finally, collecting $\rho_{2,\delta}$ with
approximate values of $\nu_1$ and $\nu_2$ from Steps 1--2  finishes
this stage.

In the next subsection, we demonstrate the performance of the
proposed algorithm to reconstruct  $\nu_1,\nu_2$ and $\rho_2$  by
series of numerical examples.



\subsection{Numerical experiments}\label{s8.2}

Here we consider \eqref{2.1}-\eqref{2.4} stated  in $\Omega_{T}$
with $\Omega=(0,1)$ and the terminal time $T=1$:
\begin{equation*}\label{9.1}
\begin{cases} \mathbf{D}_{t}u- u_{xx}-a_{0}(t)u-\mathcal{K}*
u_{xx}-\mathcal{K}*b_{0}u=\sum_{i=1}^{3}g_{i}(x,t)\equiv g(x,t) \quad\text{in}\quad\Omega_{T},\\
u(x,0)=u_{0}(x)\quad\text{in}\quad\bar{\Omega},\qquad \frac{\partial
u}{\partial\mathbf{N}}=0\quad\text{on}\quad\partial\Omega_{T}.
\end{cases}
\end{equation*}
The noisy observation \eqref{2.4} is simulated via relations
\begin{equation*}\label{9.2}
\psi_{\delta,
k}=\int_{\Omega}u(x,t_{k})dx+\delta\mathfrak{G}(t_{k}),\quad
k=1,2,...,K, \, \, K=21,
\end{equation*}
with $\delta=0.04$ and
 the noisy data $\mathfrak{G}$ having  the form
\[
\mathfrak{G}(t)=\begin{cases} t|\ln\, t|\qquad\text{{\bf FTN}
case},\\
t^{\nu_{1}}\qquad\qquad \text{{\bf STN} case},\\
t^{\nu_{1}}|\ln\, t|\qquad\text{{\bf TTN} case}.
\end{cases}
\]
We consider the following uniform distribution of the observation
time moments $t_{k}:$
\begin{equation*}\label{9.3}
t_{k}= k\tau\qquad k=1,2,...,21 \qquad \text{with} \quad
\tau=10^{-4},
\end{equation*}
 while the sequences of regularization
parameters are selected as
\begin{align*}\label{9.4}\notag
&\sigma_{i}=2^{-i},\quad i=1,2,...,60,\quad
\bar{t}_{j}=2^{1-j}t_{20},\quad j=1,2,...,100;\\
&\hat{\sigma}_{i}=2^{1-i},\quad i=1,2,...,15,\quad
\hat{t}_{j}=2^{1-j}t_{20}, \quad j=1,2,...,10.
\end{align*}
Then the approximate minimizer is  chosen  in form \eqref{8.2} or in
the similar one (in the case of the Second Strategy) with
$\mathfrak{I}=3$ and $\mathfrak{P}=9$ (with $a=0.99$  employed in
the examples below). Finally, we notice that we chose $\lambda=0.5$
in \eqref{8.5} and $t_{0}=t_{20}$ in \eqref{rho.1}.

 Example \ref{e.1}  focuses on the finding orders $\nu_{1}$ and $\nu_{2}$ of the fractional derivatives in two-term fractional
operators $\mathbf{D}_{t}$, while Example \ref{e.2} concerns with
the case of the three-term $\mathbf{D}_{t}$, where we look for
$\nu_{1}$ and $\nu_{3}$. Besides, in Example \ref{e.1}, we search
also  unknown coefficient $\rho_{2}$ and, accordingly, we
reconstruct all parameters in the fractional operator
$\mathbf{D}_{t}$. In all examples, the data $u(x,t_{k})$ are
generated by the explicit solutions of the corresponding
initial-boundary value problems. The outcomes of Examples
\ref{e.1}--\ref{e.2} are listed in Tables
\ref{tab:e1.1}--\ref{tab:e2.1}. In Example \ref{e.1}, for the sake
of place, we report  the complete list of numerical simulation
concerning reconstructed orders $\nu_1, \nu_2$ while numerical
results of the recovery of $\rho_2$ is listed only in \textbf{FTN}
case, since the outcomes to remaining noise show the similar
performance. As for Example \ref{e.2}, the proposed algorithm has
exhibited the analogous efficiency and, hence, we give here the
numerical calculations of $\nu_1,\nu_3$ in the \textbf{FTN} case.
\begin{example}
\label{e.1} We consider the equation in \eqref{9.1}  with
\[
\nu_{1}=\nu,\quad \nu_{2}=\nu/2,\quad \rho_{1}(t)=1/2, \quad
\rho_{2}(t)=1/4,\quad a_{0}(t)=2,\quad b_{0}(t)=\frac{1}{30},\quad
\mathcal{K}(t)=1+t,
\]
 and $\D_{t}$ being the I type FDO, that is
$
\mathbf{D}_{t}u=\frac{1}{2}\mathbf{D}_{t}^{\nu}u-\frac{1}{4}\mathbf{D}_{t}^{\nu/2}u.
 $
 The right-hand sides in \eqref{9.1} are given with
\begin{align*}
& u_{0}(x)=x^{2}(1-x)^{2}, \quad g_{2}(x,t)=-2t^{\nu}-2[1+t][1-6x+7x^{2}-2x^{3}+x^{4}],\\
&g_{1}(x,t)=\frac{\Gamma(1+\nu)}{2}-\frac{\Gamma(1+\nu)}{4\Gamma(1+\nu/2)}t^{\nu/2}+
\frac{x^{2}(1-x)^{2}}{2}\Big[\frac{t^{1-\nu}}{\Gamma(2-\nu)}-\frac{t^{1-\nu/2}}{2\Gamma(2-\nu/2)}\Big]
,\\
 &
 g_{3}(x,t)=-\frac{t^{1+\nu}}{30(1+\nu)}-\frac{t^{2+\nu}}{30(1+\nu)(2+\nu)}-[t+t^{2}+t^{3}/6]
 \Big[\frac{x^{2}(1-x)^{2}}{30}+2-12x+12x^{2}\Big].
\end{align*}
\end{example}
\noindent Performing direct calculations, we conclude that $
u(x,t)=x^{2}(1-x)^{2}[1+t]+t^{\nu} $ solves  this initial-boundary
value problem.  In this example, appealing to Theorem \ref{t5.1} and
the three-step algorithm, we reconstruct $\nu_{1},\nu_{2}$ and
$\rho_2$. It is worth noting that, to reconstruct $\rho_2$, we test
two different options to $\nu_2$ in \eqref{rho.1} as well different
approximated functions to $\psi$. The numerical results have
demonstrated that the best outcomes are provided via $\nu_2\approx
\hat{\nu}_{2,\delta}$, while the selection of approximation to
$\psi$ does not influence essentially to numerical results. Thus, in
Table \ref{tab:e1.1}, we list only the best numerical results to
$\rho_2$ in the \textbf{FTN} case.
\begin{table}
  \begin{center}
 \caption{The quantities $\bar{\nu}_{1,\delta},$ $\bar{\nu}_{2,\delta},$ $\hat{\nu}_{2,\delta}$ and $\hat{\rho}_{2,\delta}$ (in the case of \textbf{FTN}) in Example \ref{e.1}}
   \label{tab:e1.1}
  \begin{tabular}{c|c|c|c|c|c|c|c|c|c|c|c}
      \hline
  \,&   \multicolumn{4}{c|}{\textbf{FTN}} & \multicolumn{3}{c|}{\textbf{STN}} & \multicolumn{3}{c|}{\textbf{TTN}} &\,\\
      \hline
    \!$\nu_1$   &  $\bar{\nu}_{1,\delta}$ & $\bar{\nu}_{2,\delta}$ & $\hat{\nu}_{2,\delta}$& $\hat{\rho}_{2,\delta}$  & $\bar{\nu}_{1,\delta}$ & $\bar{\nu}_{2,\delta}$ &
   $\hat{\nu}_{2,\delta}$ & $\bar{\nu}_{1,\delta}$ & $\bar{\nu}_{2,\delta}$ & $\hat{\nu}_{2,\delta}$ &\!$\nu_2$\\
      \hline
      0.1 & 0.0999 & 0.0562 & 0.0499 & 0.2492 & 0.0956 & 0.0627 & 0.0460 & 0.0658 & 0.0567 & 0.0887 & 0.05\\
      0.2 & 0.1999 & 0.1018 & 0.0995 & 0.2501 & 0.1960 & 0.0915 & 0.1012 & 0.1658 & 0.0892 & 0.1810 & 0.10\\
      0.3 & 0.2999 & 01452 & 0.1476 & 0.2543 & 0.2963 & 0.1526 & 0.1544 & 0.2671 & 0.1319 & 0.1973 & 0.15\\
0.4 & 0.3999 & 0.1896 & 0.1879 & 0.2620 & 0.3962 & 0.2080 & 0.2030 & 0.3667 & 0.1731 & 0.2486 & 0.20\\
0.5 & 0.4999 & 0.2504 & 0.2344 & 0.2532 & 0.4975 & 0.0219 & 0.2828 & 0.4672 & 0.2519 & 0.3148 & 0.25 \\
0.6 & 0.5994 & 0.2389 & 0.2919 &  0.2613 & 0.5962 & 0.2064 & 0.3095 & 0.5661 & 0.1781 & 0.3154 & 0.30\\
0.7 & 0.6986 & 0.1099 & 0.3897 & 0.2838 & 0.6961 & 0.1804 & 0.3641 & 0.6661 & 0.1730 & 0.3337 & 0.35\\
0.8 & 0.7954 & 0.3658 & 0.4838 & 0.2339  & 0.7959 & 0.2031 & 0.4100 & 0.7669 & 0.4420 & 0.4244 & 0.40\\
 \hline
 \end{tabular}
  \end{center}
\end{table}
\begin{example}\label{e.2}
We consider \eqref{9.1} in case of the II type FDO with three
fractional derivatives and
\[
M=3,\, \nu_{1}=\nu,\, \nu_{2}=\frac{\nu}{2},\,
\nu_{3}=\frac{\nu}{3}, \,\rho_{1}(t)=1/2,\,
\rho_{2}(t)=-\frac{1}{4},\, \rho_{3}(t)=\frac{1+t^{2}}{4},\,
a_{0}(t)=2,\, b_{0}(t)=0,
\]
 and
\begin{align*}
& \mathcal{K}(t)=t^{-\gamma}\quad\text{with}\quad \gamma\in(0,1),
\quad u_{0}(x)=x^{2}(1-x)^{2},\\
 & g_{1}(x,t)=x^{2}(1-x)^{2}\Big[15\Gamma(1+\nu)-\frac{15}{2}\frac{\Gamma(1+\nu)}{\Gamma(1+\frac{\nu}{2})}t^{\frac{\nu}{2}}
+\frac{t^{2-\frac{\nu}{3}}}{2\Gamma(3-\frac{\nu}{3})}+\frac{15\Gamma(1+\nu)}{2\Gamma(1+\frac{2\nu}{3})}t^{\frac{2\nu}{3}}
+\frac{15\Gamma(3+\nu)}{2\Gamma(3+\frac{2\nu}{3})}t^{2+\frac{2\nu}{3}}
 \Big],\\
&g_{2}(x,t)=-2[1+30t^{\nu}][1-6x+7x^{2}-2x^{3}+x^{4}],\\
&g_{3}(x,t)=-2[1-6x+6x^{2}]\Big[\frac{t^{1-\gamma}}{1-\gamma}+30t^{1-\gamma+\nu}\frac{\Gamma(1-\gamma)\Gamma(1+\nu)}{\Gamma(2+\nu-\gamma)}\Big].
\end{align*}
\end{example}
\noindent The direct calculations arrives at the explicit form of
the solution $ u(x,t)=x^{2}(1-x)^{2}[1+30t^{\nu}] $ to this
initial-boundary value problem. Here, in accordance of Theorem
\ref{t3.2} and the two-step  algorithm, we seek $\nu_{1}$ and
$\nu_{3}$.
\begin{table}
  \begin{center}
 \caption{The quantities $\bar{\nu}_{1,\delta},$ $\bar{\nu}_{3,\delta},$ $\hat{\nu}_{3,\delta}$  in Example \ref{e.2} in the case of \textbf{FTN}}
   \label{tab:e2.1}
  \begin{tabular}{c|c|c|c|c|c|c|c|c}
           \hline
       \!$\nu_1$   & 0.1 & 0.2 & 0.3 & 0.4 & 0.5 & 0.6 & 0.7 & 0.8 \\
      \!$\bar{\nu}_{1,\delta}$ & 0.0999 & 0.1999 & 0.2999 & 0.3999 & 0.4999 & 0.5998 & 0.6996 & 0.7987\\
      \!$\nu_3=\frac{\nu_1}{3}$   & 0.0333 & 0.0667 & 0.1000 & 0.1333 & 0.1667 & 0.2000 & 0.2333 & 0.2667 \\
      \!$\bar{\nu}_{3,\delta}$ & 0.0256 & 0.0637 & 0.1056 & 0.1540 & 0.1664 & 0.1712 & 0.0834 & 0.1672\\
       \!$\hat{\nu}_{3,\delta}$ & 0.0334 & 0.0667 & 0.1006 & 0.1359 & 0.1671 & 0.1964 & 0.1540 & 0.2337\\
 \hline
 \end{tabular}
  \end{center}
\end{table}

\section{Discussion and Conclusion}
\label{s9} \noindent In conclusion, we notice that  our theoretical
results along with the regularized computational algorithm tested by
numerical examples are  the effective analytical and numerical
approach to simultaneously recovery (not only in the theory but also
in practice) of fractional order derivatives and constant
coefficients in the fractional differential operators modeling
subdiffusion processes with memory. In further, this approach may be
incorporated to finding adequate constitutive relations describing
complex dynamical processes in living systems. In particular,
collecting Theorem \ref{t5.1} with \cite[Theorem 2.1]{HPV} and
exploiting the corresponding numerical algorithm, one can completely
identify memory parameters in the subdiffusion equation describing
oxygen distribution through capillaries to surrounding tissues.
Finally, the proposed recovery algorithm along with numerical
examples work well in the case of discrete measurement. This fact
suggests that theoretically justified explicit  formulas
\eqref{3.2*} and \eqref{3.3*} may be adapted to the case of the
(direct) initial-boundary value problems having not only classical
solutions but also strong or weak solutions whose existence is
provided in \cite{V1,V2}. The latter means relaxed data requirements
as outlined in Theorems \ref{t3.1}, \ref{t3.2},  \ref{t5.1} and
\ref{t5.2}, and therefore, this issue may be a further
investigation.


\subsection*{Acknowledgments}
S.P. has been partially supported by COMET Module S3AI managed by
the Austrian Research Promotion Agency FFG; N.V. has been partially
supported by the Simons Foundation, SFI-PD-Ukraine-00017674, Awardee
Initials.




\begin{thebibliography}{60}



\bibitem{BK}
F. Bauer, S. Kindermann, The quasi-optimality criterion for
classical inverse problems, Inverse Problems, \textbf{24} (2008)
035002.


\bibitem{E1}
M.A. Ezzat, N.S. Al Sowayan, Z.I.A. Al-Muhiameed, S.M. Ezzat,
Fractional modelling of Pennes' bioheat transfer equation, Heat Mass
Transfer, \textbf{50} (2014) 907--914.

\bibitem{FNP}
M. Fornasier, V. Naumova, S.V. Pereverzyev, Parameter choice
strategies for multipenalty regularization, SIAM J. Numer. Anal.,
\textbf{52}(4) (2014) 1770--1794.


\bibitem{GAV}
I. Gabay, A. Abergel, T. Vasilyev, Y. Rabi, D.M. Fliss, A. Katzir,
Temperature-controlled two-wavelenght laser soldering of tissues,
Lasers Surg. Med., \textbf{43} (2011) 907--913.

\bibitem{GN1}
A.E. Green, P.M. Naghdi, On undamped heat waves in an elastic solid,
J. Thermal Stresses, \textbf{15}(2) (1992) 253--264.

\bibitem{HNWY}
Y. Hatano, J. Nakagawa, Sh. Wang, M. Yamamoto, Determination of
order in fractional diffusion equation,
 J. Math-for-Ind., \textbf{5A} (2013) 51--57.

\bibitem{HPV}
 A. Hulianytskyi, S. Pereverzyev,  N. Vasylyeva, Quasi-optimality
approach in reconstruction of singularities in multi-term
subdiffusion equations, submitted (2025).

\bibitem{ICM}
G. Iaffaldano, M. Caputo, S. Martino, Experimental and theoretical
memory diffusion of water in sand, Hydrol. Syst. Sci. Discuss,
\textbf{2} (2005) 1329--1357.

\bibitem{IJ}
K. Ito, B. Jin, Inverse problems, Series on Applied Mathematics, v.
22, World Scientific Publishing Co.Pte. Ltd., Sigapore, 2015.

\bibitem{Ja}
J. Janno,  Determination of the order of fractional derivative and a
kernel in an inverse problem for a generalized time fractional
diffusion equation,
 Electron. J. Diff. Equa., \textbf{2016} (2016) 28.


\bibitem{JK}
B. Jin, Y. Kian, Recovery multiple fractional orders in
time-fractional diffusion in an unknown medium, Proc. R. Soc. A,
\textbf{477} (2021) 0468.

\bibitem{J}
G. Jumarie, Modified Riemann-Liouville derivative and fractional
Taylor series of nondifferentiable dunctions further results,
Comput. Math. Appl., \textbf{51} (2006) 1367--1376.


\bibitem{Ka}
W. Kaminski, Hyperbolic heat conduction equation for material with a
non-homogeneous inner structure, ASME J. Heat Transfer, \textbf{112}
(1990) 555--560.

\bibitem{KST}
A.A. Kilbas, H.M. Srivastava, J.J. Trujillo, Theory and applications
of fractional differential equations, North-Holland Math. Stud.,
vol. 204, Elsevier Science B.V., Amsterdam, 2006.

\bibitem{KPV1}
M. Krasnoschok, V. Pata, N. Vasylyeva, Solvability of linear
boundary value problem for subdiffusion equations with memory, J.
Integral Equations and Applications, \textbf{30}(3) (2018) 417--445.

\bibitem{KPV3}
M. Krasnoschok, V. Pata, N. Vasylyeva, Semilinear subdiffusion with
memory in multidimensional domains,  Mathematische Nachrichten,
\textbf{292}(7) (2019) 1490--1513.

\bibitem{KPSV}
M. Krasnoschok, S. Pereverzyev, S.V. Siryk, N. Vasylyeva,
Regularized reconstruction of the order in semilinear subdiffusion
with memory. In: J. Cheng, S. Lu, M. Yamamoto (eds.) Inverse
Problems and Related Topics (ICIP2 2018). Springer Proceedings in
Mathematics $\&$ Statistics. \textbf{310} (2020), 205--236; DOI:
10.1007/978-981-15-1592-7\_10.

\bibitem{KPSV2}
M. Krasnoschok, S. Pereverzyev, S.V. Siryk, N. Vasylyeva,
Determination of the fractional order in semilinear subdiffusion
equations, Fract. Calc. Appl. Anal., \textbf{23}(3) (2020) 694-722.

\bibitem{KPSV5}
M. Krasnoschok, V. Pata, S.V. Siryk,  N. Vasylyeva, A subdiffusive
Navier-Stokes-Voigt system, Physica D, \textbf{409} (2020) 132503.

\bibitem{LMB}
N.K. Lamba, V.R. Manthena, P.P. Bhad, V.B. Srinivas, A.E.
Abouelregal, Thermal characteristics of a multilayered annular disk
with thermosensitive features using a fractional-order heat
conduction model, Acta Mech., \textbf{236} (2025) 937--958.



\bibitem{LLY}
 Z. Li, Y. Liu, and M. Yamamoto, Inverse problems of determining
parameters of the fractional partial differential equations. in
Handbook of Fractional Calculus with Applications, vol. 2,  De
Gruyter, Berlin, 2019.

\bibitem{LHY}
Z. Li, X. Huang, M. Yamamoto, A stability result for the
determination of order in time-fractional diffusion equations, J.
Inverse Ill-Posed Probl., \textbf{28} (2020) 379--388.


\bibitem{LY}
Z. Li, M. Yamamoto, Uniqueness for inverse problems of determining
orders of multi-term time-fractional derivatives of diffusion
equation, Appl. Anal., \textbf{94}(3) (2015) 570-579.

\bibitem{LY1}
Y. Liu,  M. Yamamoto, Uniqueness of orders and parameters in
multi-term time-fractional diffusion equations by  short-time
behavior, Inverse Problems, \textbf{39}(2) (2023) 024003.

\bibitem{LP}
 S. Lu, S.V. Pereverzyev, Regularization theory for ill-posed problems: Selected topics, De Gruyter, Berlin, 2013.


\bibitem{MV}
S. Mahjoob, K. Vafai, Analytical characterization of heat transport
through biological media incorporating hyperthemia treatment, Int.
J. Heat Mass. Transf., \textbf{52} (2009) 1608--1618.

\bibitem{PKLS}
P. Parmar, S. Karmakar, A. Lahiri, S.P. Sarkar, Study of generalized
two-dimensional bioheat problem in the context of memory-dependent
derivative, J. Thermal Biology, \textbf{129} (2025) 104107.


\bibitem{PSV}
V. Pata, S.V. Siryk, N. Vasylyeva, Multi-term fractional linear
equation modeling oxygen subdiffusion through capillaries,
 Applicable Analysis, (2025),
http://dx.doi.org/10.1080/00036811.2025.2533956.

\bibitem{PSV1}
S. Pereverzyev, S.V. Siryk, N. Vasylyeva, Identification of the
memory order in multi-term semilinear subdiffusion,  Numerical
Functional Aanlysis and Optimization, \textbf{46}(3) (2025)
273--309.

\bibitem{Po}
R. Ponce, Identification of the order in fractional discrete
systems, Math. Methods Appl. Sci., \textbf{47}(12) (2024)
9758--9768.


\bibitem{RC}
S.K. Roy Choudhuri, On a thermoelastic three-phase-lag model, J.
Thermal Stresses, \textbf{30}(3) (2007) 231--238.



\bibitem{SLJ} C. Sun, G. Li, X. Jia, Numerical inversion for the multiple fractional orders in the multiterm TFDE,
Adv. Math. Phys.,  (2021) 3204959.

\bibitem{SM}
S. Singh, R. Melnik, Thermal ablation of biological tissues in
disease treatment: a review of computational models and future
directions, Electomagn. Biol. Med., \textbf{39}(2) (2020) 49--88.


\bibitem{SV}
S.V. Siryk, N. Vasylyeva, Initial-boundary value problems to
semilinear multi-term fractional differential equations, Comm. Pure
Appl. Anal., \textbf{22} (2023) 2321--2364.

\bibitem{STS}
V. Srivastava, D. Tripathi, P.K. Srivastava, S. Kuharat, O. Anwar
B\'{e}g, Mathematical modeling of oxygen diffusion from capillary to
tissues during hypoxia through multiple points using fractional
balance equations with memory, Critical Rev. Biomed. Engineer.,
\textbf{52}(6) (2024) 1--13.

\bibitem{SR}
V. Srivastava, K.N. Rai, A multi-term fractional diffusion equation
for oxygen delivery through a capillary to tissues, Math. Comput.
Modelling, \textbf{51} (2010) 616--624.



\bibitem{TG}
A.N. Tikhonov,  V.B. Glasko, Use of the regularization methods in
nonlinear problems, USSR Comput. Math. Math. Phys., \textbf{5}(3)
(1965), 93--107.

\bibitem{ZJY}
Y.X. Zhang, J. Jian, L. Yan, Bayesian approach to a nonlinear
inverse problem for a time-space fractional diffusion equation,
Inverse Problems, \textbf{34} (2018) 125002.

\bibitem{V1}
N. Vasylyeva, Semilinear multi-term fractional in time diffusion
with memory, Front. Appl. Math. Stat., \textbf{10} (2024) 1388414.

\bibitem{V2}
N. Vasylyeva, Longtime behavior of semilinear multi-term fractional
in time diffusion, J. Ellipt. Prabolic Equations, \textbf{10} (2024)
5559--593.

\end{thebibliography}
\end{document}